\g@addto@macro\bfseries{\boldmath}
\newcommand{\A}{\mathcal{A}}
\newcommand{\C}{\mathcal{C}}
\newcommand{\M}{\mathcal{M}}
\newcommand{\T}{\mathbb{T}}
\newcommand{\z}{\zeta}
\newcommand{\conj}[1]{\overline{#1}}
\newcommand{\D}{\mathbb{D}}
\newcommand{\cD}{\text{clos}(\mathbb{D})}
\newcommand{\ip}[2]{\big\langle #1, #2 \big\rangle}
\newcommand{\m}{\textit{m}}
\newcommand{\hil}{\mathcal{H}}
\newcommand{\deh}{\text{clos}({\Delta H^2(Y)})}
\newcommand{\h}{\mathcal{H}}
\newcommand{\bhk}{\mathcal{B}(Y,X)}
\newcommand{\bx}{\mathcal{B}(X)}
\renewcommand\Im{\operatorname{Im}}
\newtheorem{thm}{Theorem}[section]
\newtheorem{lemma}[thm]{Lemma}
\newtheorem{cor}[thm]{Corollary}
\newtheorem{prop}[thm]{Proposition}
\theoremstyle{definition}
\theoremstyle{definition}
\newtheorem{definition}{Definition}[section]
\begin{document}
\title{\textbf{Hilbert spaces of analytic functions with a contractive backward shift}}
\author{Alexandru Aleman, Bartosz Malman}
\date{}
\maketitle

\begin{abstract}
We consider Hilbert spaces of analytic functions in the disk with a normalized reproducing kernel and such that the backward shift $f(z) \mapsto \frac{f(z)-f(0)}{z}$ is a contraction on the space. We present a model for this operator and use it to prove the surprising result that functions which extend continuously to the closure of the disk are dense in the space. This has several applications, for example we can answer a question regarding reverse Carleson embeddings for these spaces. We also identify a large class of spaces which are similar to the de Branges-Rovnyak spaces and prove some results which are new even in the classical case. 
\end{abstract}

\section{Introduction}
Let $\D$ be the unit disk of the complex plane $\mathbb{C}$. The present paper is concerned with the study of a class of Hilbert spaces of analytic functions on which the backward shift operator acts as a contraction. More precisely, let $\h$ be a Hilbert space of analytic functions such that 

\begin{enumerate}[({A.}1)] \item \label{a11} the evaluation $f \mapsto f(\lambda)$ is a bounded linear functional on $\h$ for each $\lambda \in \D$,
	\item \label{a12} $\hil$ is invariant under the backward shift operator $L$ given by $$Lf(z) = \frac{f(z)-f(0)}{z},$$ and we have that $$\|Lf\|_{\h} \leq \|f\|_{\h}, \quad f \in \hil,$$ 
	\item \label{a13} the constant function 1 is contained in $\h$ and has the reproducing property $$\ip{f}{1}_\hil = f(0) \quad f \in \hil.$$ 
\end{enumerate}
Here $\ip{\cdot}{\cdot}_\h$ denotes the inner product in $\h$. By (A.1), the space $\h$ comes equipped with a reproducing kernel $k_\h: \D \times \D \to \mathbb{C}$ which satisfies $$\ip{f}{k_\h(\cdot, \lambda)}_\h = f(\lambda), f \in \h.$$ The condition (A.3) is a normalization condition which ensures that $k_\h(z,0) = 1$ for all $z \in \D$. The first example that comes to mind is a weighted version of the classical Hardy space, where the norm of an element $f(z) = \sum_{k=0}^\infty f_kz^n$ is given by $$\|f\|^2_w = \sum_{k=0}^\infty w_k|f_k|^2$$ with $w = (w_k)_{k=0}^n$ a nondecreasing sequence of positive numbers $w_k$, and $w_0 = 1$. More generally, the conditions are fulfilled in any $L$-invariat Hilbert space of analytic functions in $\D$ with a normalized reproducing kernel on which the forward shift operator $M_z$, given by $M_zf(z) = zf(z)$, is expansive ($\|M_zf\|_\h\ge\|f\|_\h$). Moreover, any $L$-invariant subspace of such a space of analytic functions satisfies the axioms as well if it contains the constants. A more detailed list of examples will be given in the next section. It includes de Branges-Rovnyak spaces, spaces of Dirichlet type and their $L$-invariant subspaces. Despite the conditions being rather general, it turns out that they imply useful common structural properties of these function spaces. The purpose of this paper is to reveal some of those properties and discuss their applications. 

The starting point of our investigation is the following formula for the reproducing kernel. The space $\h$ satisfies  (A.1)-(A.3)  if and only if the reproducing kernel $k_\h$ of $\h$ has the form \begin{equation}k_\h(z, \lambda) = \frac{1-\sum_{i\ge 1} \conj{b_i(\lambda)}b_i(z)}{1-\conj{\lambda}z} = \frac{1-\textbf{B}(z)\textbf{B}(\lambda)^*}{1-\conj{\lambda}z}, \quad \textbf{B}(0)=0, \label{kernelequation} \end{equation} 
where  $\textbf{B}$ is the analytic  row contraction  into $l^2$ with entries $(b_i)_{i \ge 1}$.  This follows easily from the positivity of the operator $I_\h - LL^*$ and will be proved in \thref{kernel} below. 

%As a corollary of the formula, we will deduce that these kernels are (normalized) complete Pick kernels (see \cite{mccarthypick} and \thref{pickkernellemma}). 

The representation in \eqref{kernelequation} continues to hold in the case when $\h$ consists of vector-valued functions, with $\textbf{B}$ an analytic operator-valued contraction.  Such kernels have been considered in \cite{ballbolotnikov} where they are called de Branges-Rovnyak kernels. This representation of the reproducing kernel in terms of $\textbf{B}$ is obviously not unique. We shall  denote throughout  by $[\textbf{B}]$ the  class of $\textbf{B}$ with respect to the equivalence $\textbf{B}_1\sim\textbf{B}_2\Leftrightarrow \textbf{B}_1(z)\textbf{B}_1^*(\lambda)=\textbf{B}_2(z)\textbf{B}_2^*(\lambda),~z,\lambda\in \D$, and by 
 $\h[\textbf{B}]$ the Hilbert space of analytic functions satisfying (A.1)-(A.3) for which the reproducing kernel is given by \eqref{kernelequation}. 
\begin{definition} \label{defhB}  The space $\h[\textbf{B}]$ is of \textit{finite rank} if  there exists $\textbf{C} \in [\textbf{B}],~\textbf{C}= (c_1, \ldots, c_n, \ldots)$ and an integer $N\ge 1$ with $c_n=0,~n\ge N$. The \textit{rank} of $\h[\textbf{B}]$ is  defined as the minimal number of nonzero terms which can occur in these representations. 
\end{definition}
Note that if  $\h[\textbf{B}]$ is of finite rank and $\textbf{C} \in [\textbf{B}],~\textbf{C}= (c_1, \ldots, c_n, \ldots)$ has the (nonzero) minimum number of nonzero terms, then these must be linearly independent.
 The rank-zero case corresponds to the Hardy space $H^2$, while rank-one spaces are the classical  de Branges-Rovnyak  spaces. These will be denoted by $\h(b)$.

Our basic tool for the study of these spaces is a model for the contraction $L$ on $\h$. The intuition comes from a simple example, namely a de Branges-Rovnyak space $\h(b)$ where $b$ is a non-extreme point of the unit ball of $H^\infty$. Then there exists an analytic outer function $a$ such that $|b|^2 + |a|^2 = 1$ on $\T$, and we can consider the $M_z$-invariant subspace $U = \{(bh,ah) : h \in H^2\} \subset H^2 \oplus H^2$. It is proved in \cite{sarasonbook}, Section IV-7, that there is an isometric one-to-one correspondence $f \mapsto (f,g)$ between the elements of $\h(b)$ and the tuples in the orthogonal complement of $U$, and the intertwining relation $Lf \mapsto (Lf, Lg)$ holds. 

One of the main ideas behind the results of this paper is that, based on the structure of the reproducing kernel, a similar construction can be carried out for any Hilbert space satisfying (A.1)-(A.3). As is to expect, in this generality the objects appearing in our model are more involved, in particular the direct sum $H^2 \oplus H^2$ needs to be replaced by the direct sum of $H^2$ with an $M_z$-invariant subspace of a vector-valued $L^2$-space. Details of this construction, which extend to the vector-valued case as well, will be given in Section 2. This model is essentially a special case of the functional model of Sz.-Nagy-Foias for a general contractive linear operator (see Chapter VI of \cite{nagyfoiasharmop}). Moreover, it has a connection to a known norm formula (see e.g. \cite{afr}, \cite{alemanrichtersimplyinvariant}, \cite{ars}) which played a key role in the investigation of invariant subspaces in various spaces of analytic functions. We explain this connection in Section \ref{formulasubsection}.

The advantage provided by this point of view is a fairly tractable formula for the norm on such spaces. Our main result in this generality is the following surprising approximation theorem. Recall that the disk algebra $\A$ is the algebra of analytic functions in $\D$ which admit continuous extensions to $\cD$.  

\begin{thm} \thlabel{thm1}
Let $\h$ be a Hilbert space of analytic functions which satisfies (A.1)-(A.3). Then any backward shift invariant subspace of $\h$ contains a dense set of functions in $\A$. 
\end{thm}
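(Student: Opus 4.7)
The plan is to use the functional model for $L$ developed in Section~2 of this paper, which embeds $\h$ isometrically into $H^2\oplus \k$ for an $M_z$-invariant subspace $\k$ of a vector-valued $L^2(\T)$. Each $f\in\h$ corresponds to a pair $(f,g_f)$ satisfying the norm identity $\|f\|_\h^2=\|f\|_{H^2}^2+\|g_f\|_\k^2$, and the backward shift $L$ has a transparent description on the pair.

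The approximation device I would try first is the family of radial dilations $T_rf(z):=f(rz)$ for $r\in(0,1)$. Realizing $T_r$ in the model as ordinary $H^2$-dilation in the first coordinate and as Poisson convolution $P_r\ast g_f$ in the second, and noting that both operations are contractions converging strongly to the identity while Poisson convolution preserves $M_z$-invariant subspaces of $L^2(\T,Y)$, I expect a compatibility check using the explicit form of the model to show that $T_r$ is a well-defined contraction on $\h$ with $T_rf\to f$ in norm. Since $T_rf(z)=f(rz)$ extends analytically to $|z|<1/r$, it lies in $\A$, which settles the case $\M=\h$.

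The key obstacle is the invariance of a general $L$-invariant $\M\subseteq\h$ under this approximation. A short computation yields only $LT_r=rT_rL$, which is not enough; indeed, a simple example with $\h=H^2$ and $\M=K_\theta$ for a one-point Blaschke factor $\theta$ shows that $T_r$ need not preserve $L$-invariant subspaces of $\h$. My plan to overcome this is to argue intrinsically: $\M$ equipped with the restriction of $L$ is itself a reproducing kernel Hilbert space on which $L$ acts as a contraction, so its reproducing kernel is again of the form \eqref{kernelequation} (possibly after a change of base point), and the Section~2 construction can be rerun for $\M$ itself, yielding an intrinsic dilation on $\M$ whose iterates produce $\A$-functions in $\M$ converging to any prescribed $f\in\M$. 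I anticipate that the principal technical difficulty is carrying out the Section~2 construction in the case where $1\notin\M$, so that axiom (A.3) fails and a different normalization point $\mu_0\in\D$ must be used in place of the origin to re-derive the embedding of $\M$ into $H^2\oplus\k_\M$.
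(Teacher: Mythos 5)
There is a genuine gap, and it sits at the very first step: the radial dilations $T_rf(z)=f(rz)$ do not in general define bounded operators from $\h[\textbf{B}]$ into itself, so the case $\M=\h$ is not settled by your argument. The membership condition in the model is the pointwise boundary relation $\textbf{B}^*f+\Delta g\in\conj{H^2_0(Y)}$, and this relation is destroyed by dilating the first coordinate and Poisson-convolving the second, since $P_r*(\textbf{B}^*f)\neq \textbf{B}^*f_r$ in general; the pair $(f_r,\,P_r*g)$ is therefore not the model pair of any element of $\h$, and the claimed compatibility check fails. Your own counterexample already defeats the full-space case: $K_\theta=\h(\theta)=\h[\textbf{B}]$ with $\textbf{B}=\theta$ inner is itself a space satisfying (A.1)--(A.3), and for a one-point Blaschke factor with zero at $a\neq 0$ the function $f(z)=(1-\conj{a}z)^{-1}$ spans $K_\theta$ while $f_r\notin K_\theta$. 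Even in the $M_z$-invariant rank-one case with $b$ non-extreme, where polynomials are dense, it is known that radial dilations need not converge (nor even remain norm-bounded) in $\h(b)$, so no regularization of this kind is available. Your reduction of a general $L$-invariant $\M$ to the full-space case by re-running the Section 2 construction on $\M$ --- adjoining the constants when $1\notin\M$ and using the closed graph theorem for the skewed projection --- is sound and is exactly what the paper does in \thref{conttheoremcor}; but it only transfers the burden back to the case $\M=\h$, which remains unproved.

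The paper's actual argument for the full space is of a completely different, dual nature, precisely because dilation-type regularization fails here. One shows that if $\textbf{f}\in\h[\textbf{B}]$ is orthogonal to $\h[\textbf{B}]\cap\A^n$, then in the duality $(\A^n\oplus\deh)^*=\C^n\oplus\deh$ the element $J\textbf{f}$ annihilates $\bigcap_{\textbf{h}}\ker l_{\textbf{h}}$, where $l_{\textbf{h}}=(\textbf{B}\textbf{h},\Delta\textbf{h})$, and hence lies in the weak-star closure of $\{l_{\textbf{h}}:\textbf{h}\in H^2(Y)\}$ by \thref{weakstarapprox}. The technical heart (\thref{multiplyintoh2lemma} and \thref{Sset}) is to show that this weak-star closure equals $\{(\textbf{B}\textbf{g},\Delta\textbf{g}):\textbf{g}\in N^+(Y)\}$, by constructing outer multipliers (reciprocals of Herglotz transforms of $\max(\sum_i|g^i|^p,1)$) that push Cauchy transforms into $H^2$; the Smirnov maximum principle then upgrades $N^+(Y)$ to $H^2(Y)$ and forces $\textbf{f}=0$. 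Some mechanism of this kind, working around the failure of $T_r$, is indispensable, and it is absent from your proposal.
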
 

The proof of \thref{thm1} is carried out in Section \ref{densitysection} and the argument covers the finite dimensional vector-valued case as well. Our approach is based on ideas of Aleksandrov \cite{aleksandrovinv} and the authors \cite{dbrcont}, however due to the generality considered here the proof is different since it  avoids the use of classical theorems of Vinogradov \cite{vinogradovthm} and Khintchin-Ostrowski \cite[Section 3.2]{havinbook}.

Several applications of \thref{thm1} are presented in Section \ref{applications1}. One of them concerns the case when $\h$  satisfies (A.1)-(A.3) and, in addition, it is invariant for the forward shift. We obtain a very general Beurling-type theorem for $M_z$-invariant subspaces which requires the use of  \thref{thm1} in the vector-valued case. 

\begin{cor} \thlabel{cor-beurling1} Let $\h$ be a Hilbert space of analytic functions which satisfies (A.1)-(A.3)  and is invariant for the forward shift $M_z$. For a closed $M_z$-invariant subspace $\M$ of $\h$ with $\dim \M\ominus M_z\M=n<\infty$, let $\varphi_1,\ldots\varphi_n$ be an orthonormal basis in $\M\ominus M_z\M$, and denote by $\phi$ the corresponding row operator-valued function. Then \begin{equation}\M=\phi \h[\textbf{C}], \label{phihceq}\end{equation}
where $ \h[\textbf{C}]$ consists of $\mathbb{C}^n$-valued functions and the mapping $g \mapsto \phi g$ is an isometry from $\h[\textbf{C}]$ onto $\M$. Moreover, 
\begin{equation} \{\sum_{i=0}^n\varphi_iu_i:~u_i\in \A,~1\le i\le n\}\cap \h\label{cyclic}\end{equation}
is a dense subset of $\M$.
\end{cor}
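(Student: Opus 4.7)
The plan is to produce a $\mathbb{C}^n$-valued Hilbert space $\h[\textbf{C}]$ satisfying the vector analogs of (A.1)-(A.3) together with an isometric isomorphism $g\mapsto \phi g$ from $\h[\textbf{C}]$ onto $\M$, and then transfer \thref{thm1} (in its finite-dimensional vector-valued form, which the paper asserts to hold) from $\h[\textbf{C}]$ to $\M$ via this isometry.

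I first define $\h[\textbf{C}]$ by transport of norm: on $\mathbb{C}^n$-valued polynomial tuples $g=(g_1,\ldots,g_n)$ set $\|g\|^2:=\|\phi g\|^2_\M$ with $\phi g:=\sum_{i=1}^n \varphi_i g_i \in \M$ (by $M_z$-invariance), then complete. Injectivity of $g\mapsto\phi g$ on polynomials follows from the wandering decomposition
\[ \phi g = \sum_{i=1}^n g_i(0)\,\varphi_i + M_z(\phi Lg), \]
in which the first summand lies in $\M\ominus M_z\M=\text{span}\{\varphi_i\}$ and the second in $M_z\M$: $\phi g=0$ forces $g(0)=0$ by orthonormality of the $\varphi_i$, and then $\phi Lg=0$ by injectivity of $M_z$; induction on the order of vanishing at $0$ yields $g\equiv 0$. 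Verification of (A.1)-(A.3) on the completion is then mostly bookkeeping. Axiom (A.3) holds because identifying $\mathbf{e}_j\leftrightarrow\varphi_j$ makes the constant $\mathbb{C}^n$-vectors orthonormal in $\h[\textbf{C}]$ with reproducing property $\langle g,\mathbf{e}_j\rangle=\langle \phi g,\varphi_j\rangle_\M=g_j(0)$. Axiom (A.2) follows from
\[ \|g\|^2_{\h[\textbf{C}]} = |g(0)|^2+\|M_z(\phi Lg)\|^2_\M \,\ge\, |g(0)|^2+\|\phi Lg\|^2_\M = |g(0)|^2+\|Lg\|^2_{\h[\textbf{C}]}, \]
where the inequality uses expansiveness of $M_z$ on $\M$ (a consequence of $LM_z=I$ and $\|L\|\le 1$). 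Axiom (A.1) is inherited from $\M$ via the linear independence of the $\varphi_i$, which allows recovery of the components $g_i(\lambda)$ from finitely many values of $\phi g$ near $\lambda$. Applying the vector-valued form of \eqref{kernelequation} to $\h[\textbf{C}]$ then produces the analytic row contraction $\textbf{C}$ with $\textbf{C}(0)=0$ and the claimed reproducing kernel.

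The main obstacle is surjectivity of $\phi:\h[\textbf{C}]\to\M$, which amounts to density of $\text{span}\{z^k\varphi_i:k\ge 0,\,1\le i\le n\}$ in $\M$. Expansiveness of $M_z$ gives $\bigcap_k M_z^k\M=\{0\}$, but because $M_z$ is not isometric the summands $M_z^k\M\ominus M_z^{k+1}\M$ may strictly contain $M_z^k(\M\ominus M_z\M)$, so the classical Wold-type argument fails. I would handle this by invoking the functional model from Section 2 (which realizes $\M$ as a closed subspace of $H^2\oplus K$ for some $M_z$-invariant $K$ in a vector-valued $L^2$) and showing, in that concrete realization, that the tail $z^{N+1}f_{N+1}$ in the iterated decomposition $f=\sum_{k=0}^N z^k p_k+z^{N+1}f_{N+1}$ tends to zero in $\M$-norm as $N\to\infty$.

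Once the isomorphism $\M\cong\h[\textbf{C}]$ is in place, the density claim \eqref{cyclic} follows cleanly: \thref{thm1} in its vector-valued form, applied to $\h[\textbf{C}]$ viewed as a trivial backward-shift-invariant subspace of itself, yields that $\h[\textbf{C}]\cap\A^n$ is dense in $\h[\textbf{C}]$; pushing through the isometry $\phi$ produces a dense subset of $\M$ consisting of sums $\sum_i\varphi_i u_i$ with $(u_i)\in\A^n$, which is contained in $\{\sum_i\varphi_i u_i:u_i\in\A\}\cap\h$.
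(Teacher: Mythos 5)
Your overall strategy (build a $\mathbb{C}^n$-valued space satisfying (A.1')--(A.3'), show $g\mapsto\phi g$ is an isometric isomorphism onto $\M$, then transfer \thref{conttheorem}) matches the paper's, and your verifications of injectivity and of axioms (A.2)--(A.3) on the polynomial tuples are fine. But the step you yourself flag as ``the main obstacle'' is a genuine gap, not a technicality. Defining $\h[\textbf{C}]$ as the completion of the polynomial tuples makes $\phi\h[\textbf{C}]$ equal to the closure of $\operatorname{span}\{z^k\varphi_i : k\ge 0,\ 1\le i\le n\}$ in $\M$, so surjectivity of $\phi$ is \emph{exactly} the statement that \eqref{cyclic} holds with $\A$ replaced by the polynomials. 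The paper explicitly identifies this as a nontrivial question that it cannot answer in the present generality: it is proved only for finite rank $M_z$-invariant spaces (\thref{polydense}), using the invertibility of the outer matrix $\textbf{A}$ and the resolvent formula of \thref{cfprop}, none of which is available here. Your proposed repair --- that the tail $z^{N+1}f_{N+1}$ of the iterated wandering decomposition tends to zero --- is an unproven wandering-subspace-type assertion; for a merely expansive $M_z$ with contractive left inverse $L$ there is no obvious reason for it, and nothing in Section 2's model delivers it. So as written the argument establishes $\phi\h[\textbf{C}]\subseteq\M$ with equality only onto a possibly proper closed subspace, and the corollary does not follow.

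The paper sidesteps this entirely by never completing from polynomials: it sets $L^\phi f(z)=z^{-1}\bigl(f(z)-\sum_i\ip{f}{\varphi_i}_\h\varphi_i(z)\bigr)$, a contraction on $\M$, and defines $Uf(\lambda)=\bigl(\ip{(1-\lambda L^\phi)^{-1}f}{\varphi_i}_\h\bigr)_{i=1}^n$ directly on all of $\M$. The resolvent identity forces $f(\lambda)=\sum_i (Uf)_i(\lambda)\varphi_i(\lambda)$, so with $\M_0:=U\M$ carrying the transported norm one gets $\M=\phi\M_0$ with surjectivity built in by construction; $\M_0$ satisfies (A.1')--(A.3') (boundedness of the component evaluations comes for free from $\|(1-\lambda L^\phi)^{-1}\|\le(1-|\lambda|)^{-1}$, which also patches the somewhat hand-wavy recovery of $g_i(\lambda)$ ``from finitely many values of $\phi g$ near $\lambda$'' in your (A.1) argument), and \thref{conttheorem} applied to $\M_0$ yields the $\A$-density. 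If you replace your completion-of-polynomials construction by this resolvent map, the rest of your argument goes through.
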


The dimension of $\M \ominus M_z \ominus$ is the \textit{index} of the $M_z$-invariant subspace $\M$. Note that \eqref{phihceq} continues to hold when the index is infinite. In fact, this dimension can be arbitrary even in the case of weighted shifts (see \cite{esterle}). We do not know whether \eqref{cyclic} holds in the infinite index case as well. Another  natural question which arises is whether one can replace in \eqref{cyclic} the disc algebra $\A$  by the set of polynomials. We will show that this can be done in the case that $\h[\textbf{B}]$ has finite rank. For the infinite rank and index case we point to \cite{richterreinedirichlet} where sufficient conditions are given under which \eqref{cyclic} holds with $\A$ replaced by polynomials and $n = \infty$. 

 \thref{thm1} can be used to investigate reverse Carleson measures on such spaces, a concept which has been studied  in recent years in the context of $\h(b)$-spaces and model spaces (see \cite{revcarlesonross}, \cite{revcarlmodelspaces}). If $\h \cap \A$ is dense in $\h$, then a reverse Carleson measure for $\h$ is a measure $\mu$ on $\cD$ such that $\|f\|^2_\h \leq C \int_{\cD} |f|^2 d\mu$ for $f \in \h \cap \A$. In \thref{rctheorem} we prove that if the norm in $\h[\textbf{B}]$ satisfies the identity \begin{equation}
\|Lf\|^2 = \|f\|^2 - |f(0)|^2 \label{normidentity}
\end{equation} then the space cannot admit a reverse Carleson measure unless it is a backward shift invariant subspace of the Hardy space $H^2$. A class of spaces in which this identity holds has been studied in \cite{nikolskiivasyuninnotesfuncmod}, where conditions are given on $\textbf{B}$ which make the identity hold. It holds in all de Branges-Rovnyak spaces corresponding to extreme points of the unit ball of $H^\infty$, so in particular our theorem answers a question in \cite{revcarlesonross}. On the other hand if $\h$ is $M_z$-invariant, then reverse Carleson measures may exist and can be characterized in several ways. For example, in \thref{revcarl2} we show that in this case $\h = \h[\textbf{B}]$ admits a reverse Carleson measure if and only if $g := (1-\sum_{i \in I} |b_i|^2)^{-1} \in L^1(\T)$, and the measure $gd\m$ on $\T$ is essentially the minimal reverse Carleson measure for $\h$. The two conditions considered here yield almost a dichotomy. More precisely, if $\h[\textbf{B}]$ satisfies \eqref{normidentity} then it cannot be $M_z$-invariant unless it equals $H^2$. 

Another application of \thref{thm1} gives an approximation result for the orthogonal complements of $M_z$-invariant subspaces of the Bergman space $L^2_a(\mathbb{D})$. These might consist entirely of functions with bad integrability properties. For example, there are such subspaces $\M$ for which $\int_{\D} |f|^{2+\epsilon} dA = \infty$ holds for all $\epsilon > 0$ and $f \in \M \setminus \{0\}$ (see \thref{badorthocomplements}). Note that primitives of such Bergman space functions are not necessarily bounded in the disk. However, \thref{contderivdense} below shows that the set of functions in $\M$ with a primitive in $\A$ is dense in $\M$. 

The second part of the paper is devoted to the special case when $\h[\textbf{B}]$ has finite rank, according to the definition above. Intuitively speaking, in this case the structure of the backward shift $L$ resembles more a coisometry. The simplest examples are $H^2$ (rank zero) and the classical de Branges-Rovnyak spaces $\h(b)$ (rank one). Examples of higher rank $\h[\textbf{B}]$-spaces are provided by Dirichlet-type spaces $\mathcal{D}(\mu)$ corresponding to measures $\mu$ with finite support in $\cD$ (see \cite{richtersundberglocaldirichlet}, \cite{alemanhabil}).

It is not difficult to see that the rank of an $\h[\textbf{B}]$-space is unstable under with respect to equivalent Hilbert space norms. In fact, in \cite{ransforddbrdirichlet} it is shown that $\mathcal{D}(\mu)$-spaces corresponding to a measure with finite support on $\T$ admit equivalent norms under which they become a rank one space. This leads to the fundamental question whether the (finite) rank of any $\h[\textbf{B}]$-space can be reduced in this way. This question is addressed in Section \ref{isomorphismsubsec}. In \thref{modulegen} we relate the rank of $\h[\textbf{B}]$ to the number of generators of a certain $H^\infty$-submodule in the Smirnov class. In particular it turns out (\thref{nonsimilaritytheorem}) that there exist $\h[\textbf{B}]$-spaces whose rank cannot be reduced by means of any equivalent norm satisfying (A.1)-(A.3). 

In the case of finite rank $\h[\textbf{B}]$-spaces our model becomes a very powerful tool. We use it in order to establish analogues of fundamental results from the theory of $\h(b)$-spaces. Moreover, we improve \thref{cor-beurling1} to obtain a structure theorem for $M_z$-invariant subspaces which is new even in the rank one case.  More explicitly, in \thref{shiftinvcriterion} we show that a finite rank $\h[\textbf{B}]$-space is $M_z$-invariant if and only if $\log(1-\|\textbf{B}\|^2_2)$ is integrable on $\T$. If this is the case, then we show in \thref{polydense} that the polynomials are dense in the space. In \thref{Linvsubspaces} we turn to $L$-invariant subspaces and prove the analoque of a result by Sarason in  \cite{sarasondoubly}, namely that every $L$-invariant subspace of $\h[\textbf{B}]$ has the form $\h[\textbf{B}] \cap K_\theta$, where $K_\theta = H^2 \ominus \theta H^2$ is a backward shift invariant subspaces of $H^2$. Concerning the structure of $M_z$-invariant subspaces we establish the following result. 

\begin{thm} \thlabel{thm4}
	Let $\h = \h[\textbf{B}]$ be of finite rank and  $M_z$-invariant. If $\M$ is a closed $M_z$-invariant subspace of $\h$, then $\dim \M \ominus M_z\M = 1$ and any non-zero element in $\M \ominus M_z\M$ is a cyclic vector for $M_z|\M$. Moreover, if $\phi \in \M \ominus M_z \M$ is of norm $1$, then there exists a space $\h[\textbf{C}]$ invariant under $M_z$, where $\textbf{C} = (c_1, \ldots, c_k)$ and $k \leq n$, such that \begin{equation*}\M = \phi \h[\textbf{C}] \end{equation*} and the mapping $g \mapsto \phi g$ is an isometry from $\h[\textbf{C}]$ onto $\M$.
\end{thm}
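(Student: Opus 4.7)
The strategy is to reduce the theorem to the single claim $\dim(\M \ominus M_z\M) = 1$ via Corollary~\ref{cor-beurling1}; the remaining assertions then follow directly. Indeed, granted the dimension claim, any unit vector $\phi \in \M \ominus M_z\M$ produces through Corollary~\ref{cor-beurling1} a scalar-valued space $\h[\textbf{C}]$ with $\M = \phi \h[\textbf{C}]$ and $g \mapsto \phi g$ an isometry onto $\M$; this isometry intertwines multiplication by $z$, so $\h[\textbf{C}]$ is $M_z$-invariant, and the bound $k \leq n$ in any minimal $\textbf{C} = (c_1, \ldots, c_k)$ is obtained by matching kernels via
\[
\phi(z)\overline{\phi(\lambda)}\, k_{\h[\textbf{C}]}(z,\lambda) = k_{\M}(z,\lambda) = P_\M k_{\h}(\cdot,\lambda)(z),
\]
using the rank-$n$ form of the perturbation $\textbf{B}(z)\textbf{B}(\lambda)^*/(1-\bar\lambda z)$ inside $k_\h$. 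Cyclicity of $\phi$ for $M_z|_\M$ is equivalent, under the isometry, to polynomial density in $\h[\textbf{C}]$, which holds by Theorem~\ref{polydense} applied to the finite-rank $M_z$-invariant $\h[\textbf{C}]$.

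To prove the dimension claim I argue by contradiction using the Section~2 model. Suppose $\phi_1, \phi_2 \in \M \ominus M_z \M$ are orthonormal. The Sz.-Nagy--Foias-type model assigns to each $\phi_i$ a pair $(\phi_i, G_i) \in H^2 \oplus \deh$ with $\|\phi_i\|_\h^2 = \|\phi_i\|_{H^2}^2 + \|G_i\|_{\deh}^2$, and the shift acts by multiplication by $z$ in both components. The wandering relations $\langle\phi_i, z^k\phi_j\rangle_\h = \delta_{ij}\delta_{k,0}$ for $k \geq 0$ translate, by matching Fourier coefficients separately in the two components, to the paraunitary identity
\[
\phi_i(\z)\overline{\phi_j(\z)} + \langle G_i(\z), G_j(\z)\rangle_Y = \delta_{ij}\quad\text{for a.e.\ }\z \in \T.
\]
Since $\h[\textbf{B}]$ is $M_z$-invariant of finite rank, Theorem~\ref{shiftinvcriterion} furnishes an outer function $a \in H^2$ with $|a|^2 = 1 - \|\textbf{B}\|_2^2$ on $\T$, and the concrete form of the Section~2 model in this case forces the $G_i$ to lie in a one-dimensional sub-bundle of $\deh$ determined by $a$. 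The paraunitary identity then collapses to $\phi_1(\z)\overline{\phi_2(\z)} = 0$ a.e.\ on $\T$. The positivity of $k_{H^2} - k_\h$ yields the contractive inclusion $\h[\textbf{B}] \subset H^2$, so both $\phi_i$ are non-zero $H^2$-functions, and the F.~and~M.~Riesz theorem then forces one of them to vanish identically, a contradiction.

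The main obstacle is the sharp dimension bound. The paraunitary identity alone yields only $\dim(\M \ominus M_z\M) \leq 1 + \dim Y$; cutting this down to $\dim = 1$ requires pinning down the outer function $a$ from Theorem~\ref{shiftinvcriterion} as the generator of the $G_i$'s in the model and verifying that the corresponding sub-bundle of $\deh$ is exactly one-dimensional. This delicate analysis of the Section~2 model in the finite-rank $M_z$-invariant setting is what allows the classical F.~and~M.~Riesz step to close the contradiction.
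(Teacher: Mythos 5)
There is a genuine gap, and it sits exactly where the theorem is hard: the claim $\dim \M \ominus M_z\M = 1$. Your proposed mechanism rests on two premises that fail in this setting. First, you assert that in the Section~2 model the shift acts by multiplication by $z$ in \emph{both} components, so that the wandering relations $\ip{\phi_i}{z^k\phi_j}_\h = \delta_{ij}\delta_{k,0}$ can be matched Fourier coefficient by Fourier coefficient to give a pointwise paraunitary identity on $\T$. But the model gives $JM_z f = (zf, z\textbf{f}_1 + \textbf{c}_f)$ with a nonzero constant correction $\textbf{c}_f \in \mathbb{C}^n$ in the second component (this is precisely what is exploited in \thref{shiftspecrad} and \thref{cfprop}), so the coefficient matching breaks down and the identity $\phi_i\conj{\phi_j} + \ip{G_i}{G_j}_Y = \delta_{ij}$ a.e.\ does not follow. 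Second, the claim that in the finite-rank $M_z$-invariant case the $G_i$ are confined to a \emph{one-dimensional} sub-bundle of $\deh$ is false: \thref{noWlemma} shows that $\deh = \Theta H^2(\mathbb{C}^n)$ with $\Theta(\z)$ unitary on $\mathbb{C}^n$ a.e., so the fibers are $n$-dimensional. Your own closing paragraph concedes that without this collapse you only get $\dim(\M\ominus M_z\M) \le 1 + \dim Y$, which is not the theorem; the ``delicate analysis'' you defer is the entire content of the result. The paper's route is quite different: it first proves (\thref{index1prop}) that $\M$ is nearly invariant by showing, via the coanalytic function $\textbf{c}_f(\lambda)$ of \thref{cfprop} and a boundary-value computation with Cauchy transforms, that the meromorphic function $K_{f,h}/f$ is independent of $f\in\M$, whence $f(\lambda)=0$ forces $f/(z-\lambda)\in\M$ and Fredholm theory gives constancy of the index.

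The reduction of the remaining assertions is also thinner than you suggest. \thref{cor-beurling1} does give $\M = \phi\h[\textbf{C}]$ isometrically once the index is $1$, but it says nothing about the \emph{rank} of $\h[\textbf{C}]$; your kernel-matching argument
$\phi(z)\conj{\phi(\lambda)}k_{\h[\textbf{C}]}(z,\lambda) = P_\M k_\h(\cdot,\lambda)(z)$
does not by itself bound the rank of the perturbation term in $k_{\h[\textbf{C}]}$, since projecting a kernel whose numerator has a rank-$n$ defect need not preserve that bound. The paper obtains $k\le n$ by embedding $\M/\phi$ isometrically into $H^2\oplus H^2(\mathbb{C}^n)$ using the nearly-invariant norm formula \eqref{eq221}, applying Beurling--Lax to $(I\M)^\perp = \Psi H^2(\mathbb{C}^k)$, and ruling out $k=n+1$ by a determinant argument. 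Cyclicity via \thref{polydense} is the one step you have right, but it requires knowing $\h[\textbf{C}]$ is a finite-rank $M_z$-invariant space, which is exactly what the missing rank bound supplies.
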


\section{Basic structure} \label{prelimsec}

\subsection{Reproducing kernel.} Throughout the paper, vectors and vector-valued functions will usually be denoted by boldface letters like $\textbf{c, f}$, while operators, matrices and operator-valued functions will usually be denoted by capitalized boldface letters like \textbf{B}, \textbf{A}. The space of bounded linear operators between two Hilbert spaces $X,Y$ will be denoted by $\bhk$, and we simply write $\bx$ if $X=Y$. All appearing Hilbert spaces will be assumed separable. The identity operator on a space $X$ will be denoted by $I_X$. The backward shift operation will always be denoted by $L$, regardless of the space it acts upon, and regardless of if the operand is a scalar-valued function or a vector-valued function. The same conventions will be used for the forward shift operator $M_z$. If $Y$ is a Hilbert space, then we denote by $H^2(Y)$ the Hardy space of analytic functions $\textbf{f}: \D \to Y$ with square-summable Taylor coefficients, and $H^\infty(Y)$ is the space of bounded analytic functions from $\D$ to $Y$. The concepts of \textit{inner} and \textit{outer} functions are defined as usual (see Chapter V of \cite{nagyfoiasharmop}). The space $L^2(Y)$ will denote the space of square-integrable $Y$-valued measurable functions defined on the circle $\T$, and we will identify $H^2(Y)$ as a closed subspace of $L^2(Y)$ in the usual manner by considering the boundary values of the analytic functions $\textbf{f} \in H^2(Y)$. In the case $Y = \mathbb{C}$ we will simply write $H^2$ and $L^2$. The orthogonal complement of $H^2(Y)$ inside $L^2(Y)$ will be denoted by $\conj{H^2_0(Y)}$. The norm in $L^2(Y)$ and its subspaces will be denoted by $\| \cdot \|_2$. The inner product of two elements $f,g$ in a Hilbert space $\h$ will be denoted by $\ip{f}{g}_\h$. 

As mentioned in the introduction, we will actually work in the context of vector-valued analytic functions, which will be necessary in order to prove \thref{thm1} in full generality.  Thus, let $X, \h$ be Hilbert spaces, where $\h$ consists of analytic functions $\textbf{f}: \D \to X$. The versions of axioms (A.1)-(A.3) in the $X$-valued context are:

\begin{enumerate}[({A}.1')] \item The evaluation $\textbf{f} \mapsto \ip{\textbf{f}(\lambda)}{x}_X$ is a bounded linear functional on $\h$ for each $\lambda \in \D$ and $x \in X$,
	\item  $\hil$ is invariant under the backward shift operator $L$ given by $$L\textbf{f}(z) = \frac{\textbf{f}(z)-\textbf{f}(0)}{z},$$ and we have that $$\|L\textbf{f}\|_{\h} \leq \|\textbf{f}\|_{\h}, \quad \textbf{f} \in \hil,$$ 
	\item  the constant vectors $x \in X$ are contained in $\h$ and have the reproducing property $$\ip{\textbf{f}}{x}_\hil = \ip{\textbf{f}(0)}{x}_X \quad \mathbf{f} \in \hil, x \in X.$$ 
\end{enumerate}

By (A.1') there exists an operator-valued reproducing kernel $k_\h: \D \times \D \to \bx$ such that for each $\lambda \in \D$ and $x \in X$ the identity $$\ip{\textbf{f}}{k_\h(\cdot, \lambda)x}_\h = \ip{\textbf{f}(\lambda)}{x}_X$$ holds for $\textbf{f} \in \h$. Axiom (A.3') implies that $k_\h(z,0) = I_X$ for each $z \in \D$.

\begin{prop} \thlabel{kernel} Let $\hil$ be a Hilbert space of analytic functions which satisfies the axioms (A.1')-(A.3'). Then there exists a Hilbert space $Y$ and an analytic function $\textbf{B}: \D \to \bhk$ such that for each $z \in \D$ the operator $\textbf{B}(z): Y \to X$ is a contraction, and 
\begin{equation}k_\h(z,\lambda) = \frac{I_X-\textbf{B}(z)\textbf{B}(\lambda)^*}{1-\conj{\lambda}z}. \label{kerneleq}
\end{equation} In particular, if $X = \mathbb{C}$, then  $\textbf{B}(z)$ is an analytic   row contraction into $l^2$, i.e. there exist analytic functions $\{b_i\}_{i\ge 1}$ in $\D$  such that $$k_\h(\lambda,z) = \frac{1 - \sum_{i \ge 1}b_i(z) \conj{b_i(\lambda)}}{1-\conj{\lambda}z},\quad \sum_{i\ge 1}|b_i(z)|^2\le 1,\, z\in \D. $$ If $\dim (I_\h - LL^*)\h = n < \infty$, then there exists a representation as above, with $b_i=0,~i> n$, and such that the functions $b_1, \ldots, b_n$ are linearly independent.
\end{prop}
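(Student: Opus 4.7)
The strategy is to recast \eqref{kerneleq} as the positive kernel factorization $\textbf{B}(z)\textbf{B}(\lambda)^* = K(z,\lambda)$ with
$$K(z,\lambda) := I_X - (1-\bar\lambda z)k_\h(z,\lambda),$$
and then to exhibit the factorization concretely from the defect operator of $L$. The starting point is to compute $L^*$ on reproducing kernels: combining the reproducing property, (A.3'), and the definition of $L$, a direct calculation shows that for $\lambda\in\D\setminus\{0\}$ and $x\in X$,
$$L^* k_\h(\cdot,\lambda)x = \frac{1}{\bar\lambda}\bigl(k_\h(\cdot,\lambda)x - x\bigr),$$
where on the right $x$ is interpreted as the constant function from (A.3').

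Expanding $\ip{(I-LL^*)k_\h(\cdot,\lambda)y}{k_\h(\cdot,z)x}_\h$ with this formula, and using $k_\h(0,\mu) = I_X$ throughout, I would derive the identity
$$\ip{K(z,\lambda)y}{x}_X = \ip{(I-LL^*)\,\bar\lambda\, k_\h(\cdot,\lambda)y}{\bar z\, k_\h(\cdot,z)x}_\h, \qquad z,\lambda\in\D,\; x,y\in X,$$
the case $\lambda=0$ or $z=0$ being trivial. Because $I-LL^*\ge 0$ by (A.2'), this identity immediately shows that $K$ is a positive semidefinite operator-valued kernel on $\D$. To realize the factorization concretely, I would set $D=(I-LL^*)^{1/2}$, take $Y=\overline{\operatorname{ran}(D)}\subset\h$, and define $\textbf{B}(z)^* \in \mathcal{B}(X,Y)$ by $\textbf{B}(z)^*x = \bar z\, D\, k_\h(\cdot,z)x$. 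The identity above then reads $\ip{\textbf{B}(\lambda)^*y}{\textbf{B}(z)^*x}_Y = \ip{K(z,\lambda)y}{x}_X$, giving $\textbf{B}(z)\textbf{B}(\lambda)^* = K(z,\lambda)$, that is, \eqref{kerneleq}. The function $\textbf{B}$ is analytic because $z\mapsto k_\h(\cdot,z)x$ is antiholomorphic; specializing to $\lambda = z$ yields $\textbf{B}(z)\textbf{B}(z)^* = I_X - (1-|z|^2)k_\h(z,z) \le I_X$, so $\textbf{B}(z)$ is a contraction; and $\textbf{B}(0)=0$ is forced by the factor $\bar z$ in the definition.

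For the scalar case $X=\mathbb{C}$, I would fix an orthonormal basis $\{e_i\}_{i\ge 1}$ of $Y$ and put $b_i(z) = \textbf{B}(z)e_i$; then $\textbf{B}(z)\textbf{B}(\lambda)^* = \sum_{i\ge 1} b_i(z)\overline{b_i(\lambda)}$, and $\sum_i|b_i(z)|^2 = \|\textbf{B}(z)^*\cdot 1\|_Y^2 \le 1$ by contractivity. When $\dim(I-LL^*)\h = n < \infty$, the defect space $Y$ has dimension exactly $n$, so only $n$ functions $b_i$ are needed; their linear independence follows from the observation that any relation $\sum c_ib_i \equiv 0$ would force $v := \sum c_i e_i$ to be orthogonal to $Dk_\h(\cdot,z)x$ for every $z\in\D$ and $x\in X$, hence $Dv=0$ by density of the span of reproducing kernels, contradicting $v\in Y = (\ker D)^\perp$ unless $v = 0$. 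The main technical obstacle is simply careful bookkeeping of the conjugate scalars $\bar\lambda,\bar z$ in the sesquilinear computation that produces the key identity; once that is in place the entire proposition unfolds from the single fact that $I-LL^*\ge 0$.
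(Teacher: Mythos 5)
Your proof is correct and follows essentially the same route as the paper: both compute $L^*$ on reproducing kernels, use the positivity of $I_\h - LL^*$ to recognize $I_X - (1-\conj{\lambda}z)k_\h(z,\lambda)$ as a positive operator-valued kernel, and factor it. The only difference is cosmetic: the paper cites the abstract factorization theorem for positive kernels, whereas you build the factorization explicitly via $D = (I_\h - LL^*)^{1/2}$ and $\textbf{B}(z)^*x = \conj{z}\,Dk_\h(\cdot,z)x$, which also makes the finite-rank and linear-independence assertions immediate.
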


\begin{proof}
Let $k = k_\h$ be the reproducing kernel of $\h$. Fix a vector $x \in X$ and $\lambda \in \D$. By (A.3') we have \begin{gather*}
\ip{\textbf{f}}{(k(\cdot,\lambda)-I_X)x/\conj{\lambda}}_\h = \ip{(\textbf{f}(\lambda)-\textbf{f}(0))/\lambda}{x}_X = \ip{L\textbf{f}}{k(\cdot,\lambda)x}_\h \\ = \ip{\textbf{f}}{L^*k(\cdot, \lambda)x}_\h.
\end{gather*} It follows that $$L^*k(z, \lambda)x = \frac{(k(z,\lambda) - I_X)x}{\conj{\lambda}}$$ and \begin{equation}(I_\h - LL^*)k(z,\lambda)x = k(z, \lambda)x - \frac{(k(z,\lambda) - I_X)x}{\conj{\lambda}z}. \label{eq32} \end{equation} By (A.2') the operator $P := I_\h - LL^*$ is positive. Therefore $Pk(\cdot, \lambda)$ is a positive-definite kernel and hence it has a factorization $Pk(z,\lambda) = \tilde{\textbf{B}}(z)\tilde{\textbf{B}}(\lambda)^*$ for some operator-valued analytic function $\tilde{\textbf{B}}: \D \to \bhk$ (see Chapter 2 of \cite{mccarthypick}). We can now solve for $k$ in \eqref{eq32} to obtain $$k(z,\lambda) = \frac{I_X - \textbf{B}(z)\textbf{B}(\lambda)^*}{1-\conj{\lambda}z},$$ where we have set $\textbf{B}(z) := z\tilde{\textbf{B}}(z)$. It is clear from this expression and the positive-definiteness of $k$ that $\textbf{B}(z)$ must be a contraction for every $z \in \D$. If $\dim(I_\h - LL^*) < \infty$, then the last assertion of the proposition follows in a standard manner from \eqref{eq32} and the spectral theorem applied to the finite rank operator $I_\h - LL^*$.
\end{proof}

%A reproducing kernel $k$ is a (normalized) \textit{complete Pick kernel} if it can be expressed as \[k(z,\lambda) = \frac{1}{1-S(z,\lambda)}.\] for some reproducing kernel $S$. 
%
%\begin{cor} \thlabel{pickkernellemma} If $\h$ consists of scalar-valued functions, then the reproducing kernel $k_\h$ of $\h$ is a complete Pick kernel. 
%\end{cor}
%
%\begin{proof}
%Since $\h$ is scalar-valued, we have that $\textbf{B} = (b_1, b_2, \ldots)$ is a row operator-valued function and $k_\h = \frac{1-\sum_{i=1}^\infty \conj{b_i(\lambda)}{b_i(z)}}{1-\conj{\lambda}z}.$ Thus to prove the assertion we need to verify that \[S(z,\lambda) := 1- 1/k_\h(z,\lambda) = \frac{•}{1-}\] 
%\end{proof}

\subsection{Model.} The space $\h$ is completely determined by the function $\textbf{B}: \D \to \bhk$ appearing in \thref{kernel}. To emphasize this we will on occasion write $\h[\textbf{B}]$ in place of $\h$. The function $\textbf{B}$ admits a non-tangential boundary value $\textbf{B}(\zeta)$ for almost every $\zeta \in \T$ (convergence in the sense of strong operator topology), and the operator 
\begin{equation}\Delta(\zeta) = (I_{\h} - \textbf{B}(\zeta)^*\textbf{B}(\zeta))^{1/2} \label{delta}\end{equation} induces in a natural way a multiplication operator from $H^2(Y)$ to $L^2(Y)$. The space $\deh$  is a subspace of $L^2(Y)$ which is invariant under the operator $M_\z$ given by $M_\z\textbf{g}(\z) = \z\textbf{g}(\z), \z \in \T.$ This implies that it can be decomposed as \begin{equation}
\deh = W \oplus \Theta H^2(Y_1) \label{decomp}
\end{equation} where $M_\z$ acts unitarily on $W$, $Y_1$ is an auxilliary Hilbert space, $\Theta: \T \to \mathcal{B}(Y_1, Y)$ is a measurable operator-valued function such that for almost every $\z \in \T$ the operator $\Theta(\z): Y_1 \to Y$ is isometric, and the functions in $W$ and $\Theta H^2(Y_1)$ are pointwise orthogonal almost everywhere, i.e, $\ip{\textbf{f}(\z)}{ \textbf{g}(\z)}_Y = 0$ for almost every $\z \in \T$ for any pair $\textbf{f} \in W$ and $\textbf{g} \in \Theta H^2(Y_1)$ (see Theorem 9 of Lecture VI in \cite{helsonbook}). It follows that $\Theta(\z)^* \textbf{f}(\z) = 0$ for all $\textbf{f} \in W$, and consequently $$\Theta^*\deh = \Theta^*W \oplus \Theta^* \Theta H^2(Y_1) = \{0\} \oplus H^2(Y_1).$$ The above computation shows that $\Theta^*\Delta$ maps $H^2(Y)$ to a dense subset of $H^2(Y_1)$, and so standard theory of operator-valued functions implies that there exists an analytic outer function $\textbf{A}: \D \to \mathcal{B}(Y, Y_1)$ such that $\textbf{A}(\z) = \Theta^*(\z)\Delta(\z)$ for almost every $\z \in \T$. 

\begin{thm} \thlabel{modeltheorem}
There exists an isometric embedding $J: \h[\textbf{B}] \to H^2(X) \oplus \deh$ satisfying the following properties.

\begin{enumerate}[(i)]
\item A function $\textbf{f} \in H^2(X)$ is contained in $\h[\textbf{B}]$ if and only if there exists $\textbf{g} \in \deh$ such that \[\textbf{B}^*\textbf{f} + \Delta \textbf{g} \in \conj{H^2_0(Y)}.\] If this is the case, then $\textbf{g}$ is unique and \[J\textbf{f} = (\textbf{f}, \textbf{g}).\]

\item If $J\textbf{f} = (\textbf{f}, \textbf{g})$ and $\textbf{g} = \textbf{w} + \Theta \textbf{f}_1$ is the decomposition of $\textbf{g}$ with respect to \eqref{decomp}, then $$JL\textbf{f} = (L\textbf{f}, \, \conj{\z}\textbf{w} + \Theta L\textbf{f}_1).$$

\item The orthogonal complement of $J\h[\textbf{B}]$ inside $H^2(X) \oplus \deh$ is \[ (J\h[\textbf{B}])^\perp = \{(\textbf{B}\mathbf{h}, \Delta \mathbf{h}) : \mathbf{h} \in H^2(Y)\}.\] 
\end{enumerate}
\end{thm}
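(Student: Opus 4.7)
The strategy is to build the model space directly from $\textbf{B}$ and $\Delta$ and then identify it with $\h[\textbf{B}]$ via uniqueness of the reproducing kernel. Consider the map
\[
V : H^2(Y) \to H^2(X) \oplus \deh, \qquad V\mathbf{h} = (\textbf{B}\mathbf{h}, \Delta \mathbf{h}).
\]
The defect identity $\Delta(\zeta)^2 = I_Y - \textbf{B}(\zeta)^*\textbf{B}(\zeta)$ a.e.\ on $\T$ yields $\|V\mathbf{h}\|^2 = \|\mathbf{h}\|_{H^2(Y)}^2$, so $V$ is an isometry with closed range $N$ --- exactly the candidate for (iii). Put $\k := (H^2(X)\oplus\deh) \ominus N$ and let $\pi_1$ be projection onto the first coordinate. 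Unfolding $(\textbf{f}, \textbf{g}) \perp V\mathbf{h}$ for every $\mathbf{h} \in H^2(Y)$ gives $\textbf{B}^*\textbf{f} + \Delta\textbf{g} \in \conj{H^2_0(Y)}$, the membership condition in (i). Moreover $\pi_1|_\k$ is injective: if $(\mathbf{0}, \textbf{g}) \in \k$ then $\textbf{g}$ is orthogonal to $\Delta H^2(Y)$ yet lies in $\deh$, so $\textbf{g} = 0$; this forces uniqueness of $\textbf{g}$.

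To finish (i) and (iii), one identifies $\pi_1(\k)$ --- equipped with the norm pulled back from $\k$ --- with $\h[\textbf{B}]$ by computing its reproducing kernel. A direct inner-product calculation gives $V^*(K_\lambda x, 0) = K_\lambda \textbf{B}(\lambda)^*x$, where $K_\lambda(z) = (1-\bar\lambda z)^{-1}$ is the scalar Szeg\H{o} kernel, so
\[
P_\k(K_\lambda x, 0) = \left(\frac{(I_X - \textbf{B}(z)\textbf{B}(\lambda)^*)x}{1 - \bar\lambda z},\ -\frac{\Delta(\zeta)\textbf{B}(\lambda)^*x}{1 - \bar\lambda \zeta}\right).
\]
The first coordinate matches $k_{\h[\textbf{B}]}(\cdot,\lambda)x$ from \eqref{kerneleq}, so $\pi_1(\k) = \h[\textbf{B}]$ isometrically by uniqueness of the reproducing kernel. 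Setting $J := (\pi_1|_\k)^{-1}$ then gives the isometric embedding, and $\k^\perp = N$ is precisely (iii).

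For (ii) it suffices, thanks to the uniqueness clause of (i), to check that $(L\textbf{f}, \bar\zeta\textbf{w} + \Theta L\textbf{f}_1) \in \k$. The second component lies in $\deh$ because $W$ is doubly $M_\zeta$-invariant (since $M_\zeta|_W$ is unitary) and $\Theta L\textbf{f}_1 \in \Theta H^2(Y_1)$. Substituting $L\textbf{f}(\zeta) = \bar\zeta(\textbf{f}(\zeta) - \textbf{f}(0))$ and $\Theta L\textbf{f}_1 = \bar\zeta\Theta(\textbf{f}_1 - \textbf{f}_1(0))$ into $\textbf{B}^*(L\textbf{f}) + \Delta(\bar\zeta\textbf{w} + \Theta L\textbf{f}_1)$, and using $\Delta\Theta = \textbf{A}^*$ a.e.\ on $\T$, the expression rearranges to
\[
\bar\zeta\bigl[\textbf{B}^*\textbf{f} + \Delta\textbf{w} + \Delta\Theta\textbf{f}_1\bigr] - \bar\zeta\bigl[\textbf{B}^*(\zeta)\textbf{f}(0) + \textbf{A}^*(\zeta)\textbf{f}_1(0)\bigr].
\]
The first bracket lies in $\conj{H^2_0(Y)}$ by (i) applied to $\textbf{f}$, and multiplication by $\bar\zeta$ keeps it there. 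The second bracket has only nonpositive Fourier frequencies since $\textbf{B}, \textbf{A} \in H^\infty$, so $\bar\zeta$ times it lies in $\conj{H^2_0(Y)}$ as well, completing (ii).

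The main obstacle is the bookkeeping in this last step: the boundary identity $\Delta\Theta = \textbf{A}^*$ is what fuses the unitary summand $W$ and the pure isometric summand $\Theta H^2(Y_1)$ into a single formula for the shifted second component, and the outer $\bar\zeta$ is essential for absorbing the frequency-zero contributions $\textbf{B}^*(\zeta)\textbf{f}(0)$ and $\textbf{A}^*(\zeta)\textbf{f}_1(0)$, which individually need not lie in $\conj{H^2_0(Y)}$. The rest of the proof reduces to the defect identity for $\Delta$ and standard reproducing kernel arithmetic.
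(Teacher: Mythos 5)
Your proposal is correct and follows essentially the same route as the paper: you form the closed subspace $N=\{(\textbf{B}\mathbf{h},\Delta\mathbf{h}):\mathbf{h}\in H^2(Y)\}$, show the first-coordinate projection is injective on its orthocomplement, identify the resulting space with $\h[\textbf{B}]$ by computing the reproducing kernel from $P_{\k}(K_\lambda x,0)$, and verify (ii) by the boundary membership criterion using $\Delta\Theta=\textbf{A}^*$ and the fact that multiplication by $\conj{\z}$ absorbs the coanalytic zero-frequency terms $\textbf{B}^*\textbf{f}(0)+\textbf{A}^*\textbf{f}_1(0)$. The only cosmetic differences are that you make the isometry of $\mathbf{h}\mapsto(\textbf{B}\mathbf{h},\Delta\mathbf{h})$ and the double $M_\z$-invariance of $W$ explicit, which the paper leaves implicit.
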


\begin{proof} Let $K = H^2(X) \oplus \deh, U = \{(\textbf{B}\mathbf{h}, \Delta \mathbf{h}) : \mathbf{h} \in H^2(Y)\}$ and note that $U$ is a closed subspace of $K$. Let $P$ be the projection taking a tuple $(\textbf{f},\textbf{g}) \in K$ to $\textbf{f} \in H^2(X)$. Then $P$ is injective on $K \ominus U$. Indeed, if $(0,\textbf{g})$ is orthogonal to $U$, then $\textbf{g} \in \text{clos}(\Delta H^2(Y))$ is orthogonal to $\Delta H^2(Y)$, and hence $\textbf{g} = 0$. Any analytic function $\textbf{f}$ can thus appear in at most one tuple $(\textbf{f},\textbf{g}) \in K \ominus U$. We define $\h_0 = P(K \ominus U)$ as the space of analytic $X$-valued functions with the norm $$\|\textbf{f}\|_{\h_0}^2 := \|\textbf{f}\|_2^2 + \|\textbf{g}\|^2_2.$$ We will see that $\h[\textbf{B}] = \h_0$ by showing that the reproducing kernels of the two spaces are equal. Note that for each $c \in X$ the tuple
\begin{equation*}
	k_{c,\lambda} = \Bigg( \frac{(I_{X} - \textbf{B}(z)\textbf{B}(\lambda)^*)c}{1-\conj{\lambda}z}, \frac{-\Delta(\z)\textbf{B}(\lambda)^*c}{1-\conj{\lambda}z}\Bigg) \in K
\end{equation*} is orthogonal to $U$, and therefore its first component $Pk_{c,\lambda}$ defines an element of $\h_0$. Moreover, it follows readily from our definitions that for any $\textbf{f} \in \hil_0$ we have \begin{eqnarray*}
\ip{\textbf{f}}{Pk_{c,\lambda}}_{\h_0} = \ip{\textbf{f}(\lambda)}{c}_{X}.
\end{eqnarray*} Thus the reproducing kernel of $\h_0$ equals the one given by \eqref{kerneleq}, and so $\h[\textbf{B}] = \h_0$.

It is clear from the above paragraph that a function $\textbf{f} \in H^2(X)$ is contained in $\h[\textbf{B}]$ if and only if there exists $\textbf{g} \in \deh$ such that $(\textbf{f}, \textbf{g})$ is orthogonal to $(\textbf{B}\textbf{h}, \Delta \textbf{h})$ for all $\textbf{h} \in H^2(Y)$, that is to say, if and only if $\textbf{B}(\zeta)^*\textbf{f}(\z) + \Delta(\z) \textbf{g}(\z) \in \conj{H^2_0(Y)}$. If we let $J = P^{-1}$, then $J\textbf{f} = (\textbf{f}, \textbf{g})$, and part (i) follows. Part (iii) holds by construction. In order to prove (ii), it will be sufficient by (i) to show that \begin{equation}
\textbf{B}^*L\textbf{f} + \Delta(\conj{\z}\textbf{w} + \Theta L \textbf{f}_1) \in \conj{H^2_0(Y)}. \label{eq39} \end{equation}
Let $\textbf{A} = \Theta^*\Delta$ be the analytic function mentioned above. We have that \begin{equation}\conj{\z}(\textbf{B}^*\textbf{f} + \Delta \textbf{g}) = \conj{\z} \textbf{B}^*\textbf{f} + \Delta \conj{\z} \textbf{w} + \conj{\z} \textbf{A}^*\textbf{f}_1 \in \conj{H^2_0(Y)}.\label{eq19} \end{equation} The term $\textbf{A}^* L \textbf{f}_1$ differs from $\conj{\z}\textbf{A}^*\textbf{f}_1$ only by a function in $\conj{H^2_0(Y)}$, and the same is true for $\textbf{B}^*L\textbf{f}$ and $\conj{\z}\textbf{B}^*\textbf{f}$. Thus \eqref{eq39} follows from \eqref{eq19}.
\end{proof}

\subsection{Analytic model.} The model of \thref{modeltheorem} can be greatly simplifed if $W = \{0\}$ in the decomposition \eqref{decomp}. The condition for when this occurs can be expressed in terms of $L$.

\begin{cor} \thlabel{nowiener} We have $W = \{0\}$ in \eqref{decomp} if and only if $\|L^n\textbf{f}\|_{\h[\textbf{B}]} \to 0$ as $n \rightarrow \infty$, for all $f \in \h[\textbf{B}]$. 
\end{cor}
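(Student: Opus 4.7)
The plan is to translate the condition $\|L^n \textbf{f}\|_{\h[\textbf{B}]} \to 0$ into geometric information about the $\textbf{g}$-component of $J\textbf{f}$, and then read off whether $W$ can be nonzero. The bridge is the norm identity that one gets by iterating part (ii) of \thref{modeltheorem}. First I would observe that if $J\textbf{f} = (\textbf{f}, \textbf{w} + \Theta \textbf{f}_1)$, then a simple induction gives $JL^n \textbf{f} = (L^n \textbf{f}, \bar\z^n \textbf{w} + \Theta L^n \textbf{f}_1)$, using that $\bar\z \textbf{w} \in W$ (since $M_\z$ is unitary on $W$) and $\Theta L \Theta^* (\Theta \textbf{f}_1) = \Theta L \textbf{f}_1 \in \Theta H^2(Y_1)$. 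Because $W$ and $\Theta H^2(Y_1)$ are pointwise orthogonal on $\T$, and $\Theta(\z)$ is an isometry a.e., the isometry $J$ yields
\[ \|L^n \textbf{f}\|_{\h[\textbf{B}]}^2 \;=\; \|L^n\textbf{f}\|_2^2 + \|\textbf{w}\|_2^2 + \|L^n \textbf{f}_1\|_2^2. \]
Since $L^n \to 0$ in the strong operator topology on both $H^2(X)$ and $H^2(Y_1)$, the first and third terms vanish in the limit, and the displayed identity gives $\lim_n \|L^n \textbf{f}\|_{\h[\textbf{B}]}^2 = \|\textbf{w}\|_2^2$.

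The forward direction is then immediate: if $W = \{0\}$, then $\textbf{w} = 0$ for every $\textbf{f} \in \h[\textbf{B}]$, so $\|L^n \textbf{f}\|_{\h[\textbf{B}]} \to 0$. The converse is where the real work lies. If $\|L^n \textbf{f}\|_{\h[\textbf{B}]} \to 0$ for all $\textbf{f}$, then $\textbf{w} = 0$ for every $\textbf{f} \in \h[\textbf{B}]$, which is equivalent to the geometric statement that $\{0\} \oplus W$ is orthogonal to $J\h[\textbf{B}]$ inside $K = H^2(X) \oplus \deh$. By part (iii) of \thref{modeltheorem}, this means $\{0\} \oplus W \subset U$, so for each $\textbf{w} \in W$ there exists $\textbf{h} \in H^2(Y)$ with $\textbf{B}\textbf{h} = 0$ and $\Delta \textbf{h} = \textbf{w}$ a.e.\ on $\T$.

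The final step is the one I expect to require the most care. From $\textbf{B}(\z) \textbf{h}(\z) = 0$ a.e.\ I read off $\textbf{h}(\z) \in \ker \textbf{B}(\z)$, on which $\Delta(\z) = I_Y$, hence $\textbf{w} = \textbf{h} \in H^2(Y)$. Thus every element of $W$ is actually an $H^2(Y)$-function. Now I use the $M_\z$-unitarity of $W$: applying the same argument to $\bar\z^n \textbf{w} \in W$ for each $n \ge 1$, I get $\bar\z^n \textbf{w} \in H^2(Y)$ for all $n$, which forces successive Taylor coefficients of $\textbf{w}$ to vanish and hence $\textbf{w} = 0$. Therefore $W = \{0\}$.

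The main obstacle is the last paragraph, which requires turning the abstract orthogonality $\{0\}\oplus W \subset U$ into the concrete pointwise identity $\textbf{w}=\textbf{h}$, and then extracting a contradiction from the $M_\z$-unitarity of $W$; everything else is essentially bookkeeping against \thref{modeltheorem}.
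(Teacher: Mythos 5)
Your proof is correct and follows essentially the same route as the paper's: the norm identity $\lim_n\|L^n\textbf{f}\|^2_{\h[\textbf{B}]}=\|\textbf{w}\|^2_2$ obtained from part (ii) of \thref{modeltheorem}, the identification $(0,\textbf{w})\in U$ via part (iii), the observation that $\Delta(\z)$ acts as the identity on $\ker\textbf{B}(\z)$ (which the paper establishes by the same polynomial approximation of $x\mapsto\sqrt{1-x}$ that underlies your spectral reasoning), and the conclusion that a subspace of $H^2(Y)$ on which $M_\z$ is unitary must be trivial.
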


\begin{proof}
Assume first that $W = \{0\}$. If $J\textbf{f} = (\textbf{f}, \Theta \textbf{h})$, then by (ii) of \thref{modeltheorem} we have that $$\|L^n \textbf{f}\|^2_{\h[\textbf{B}]} = \|L^n\textbf{f}\|^2_2 + \|L^n \textbf{h}\|^2_2$$ which clearly tends to 0 as $n \to \infty$. 

Conversely, note that the convergence of $L^n\textbf{f}$ to zero implies that if $J\textbf{f} = (\textbf{f}, \textbf{g})$ and $\textbf{g} = \textbf{w} + \Theta \textbf{h}$ is the decomposition of $\textbf{g}$ with respect to \eqref{decomp}, then $\textbf{w} = 0$ (else $\lim_{n \to \infty} \|L^n\textbf{f}\|_{\h[\textbf{B}]} \geq \|\textbf{w}\|_2$ by (ii) of \thref{modeltheorem}). Thus for any $\textbf{w} \in W$ we have $J\textbf{f} \perp \textbf{w}$ for all $\textbf{f} \in \h[\textbf{B}]$, and so $(0,\textbf{w}) = (\textbf{B}\textbf{h}, \Delta \textbf{h})$ for some $\textbf{h} \in H^2(Y)$ by \thref{modeltheorem}. Since $\textbf{B}\textbf{h} = 0$, we deduce that $\textbf{w} = \Delta \textbf{h} = \textbf{h}$, for example by approximating the function $x \mapsto \sqrt{1-x}$ uniformly on $[0,1]$ by a sequence of polynomials $p_n$ with $p_n(0) = 1$, so that $\Delta \textbf{h} = \lim_{n \to \infty} p_n(\textbf{B}^*\textbf{B})\textbf{h} = \textbf{h}$. It follows that $\textbf{w} = \textbf{h} \in H^2(Y)$. Since $\textbf{w}$ was arbitrary, we deduce that $W \subset H^2(Y)$, and since $M_\z$ acts unitarily on $W$, we must have $W = \{0\}$. 
\end{proof}

Assume we are in the case described by \thref{nowiener}. Then \eqref{decomp} reduces to $$\deh = \Theta H^2(Y_1).$$ Thus it holds that $\conj{\Im \Delta (\z)} = \Im \Theta(\z)$ for almost every $\z \in \T$. Since $\Theta(\z) \Theta(\z)^*$ is equal almost everywhere to the projection of $Y$ onto $\Im \Theta(\z)$, we obtain $$\textbf{A}^*(\z)\textbf{A}(\z) = \Delta(\z)\Theta(\z)\Theta(\z)^*\Delta(\z) = \Delta(\z)^2$$ and consequently $\textbf{B}(\z)^*\textbf{B}(\z) + \textbf{A}(\z)^*\textbf{A}(\z) = 1_Y$ for almost every $\z \in \T$. The following theorem is a reformulation of \thref{modeltheorem}. We omit the proof, which can be easily deduced from the proof of \thref{modeltheorem} with $\textbf{A}$ playing the role of $\Delta$.

\begin{thm} \thlabel{modeltheoremanalytic} Let $\h[\textbf{B}]$ be such that $\|L^n\textbf{f}\|_{\h[\textbf{B}]} \to 0$ as $n \rightarrow \infty$, for all $f \in \h[\textbf{B}]$. There exists an auxilliary Hilbert space $Y_1$, an outer function $\textbf{A}: \D \to \mathcal{B}(Y, Y_1)$ such that $$\textbf{B}(\z)^*\textbf{B}(\z) + \textbf{A}(\z)^*\textbf{A}(\z) = 1_Y$$ for almost every $\z \in \T$, and an isometric embedding $J:\h[\textbf{B}] \to H^2(X) \oplus H^2(Y_1)$ satisfying the following properties.

\begin{enumerate}[(i)]
\item A function $\textbf{f} \in H^2(X)$ is contained in $\h[\textbf{B}]$ if and only if there exists $\textbf{f}_1 \in H^2(Y_1)$ such that $$\textbf{B}^*\textbf{f} + \textbf{A}^* \textbf{f}_1 \in \conj{H^2_0(Y)}.$$ If this is the case, then $\textbf{f}_1$ is unique, and \[J\textbf{f} = (\textbf{f}, \textbf{f}_1).\]
\item If $J\textbf{f} = (\textbf{f}, \textbf{f}_1)$, then\[J\textbf{f} = (L\textbf{f}, L \textbf{f}_1).\] 

\item The orthogonal complement of $J\h[\textbf{B}]$ inside $H^2(X) \oplus H^2(Y_1)$ is \[(\h[\textbf{B}])^\perp = \{(\textbf{B}\mathbf{h}, \textbf{A}\mathbf{h}) : \mathbf{h} \in H^2(Y)\}.\] 

\end{enumerate} 
\end{thm}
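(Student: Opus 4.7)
The plan is to deduce this reformulation from \thref{modeltheorem} by means of the pointwise isometric multiplication operator $\Theta: H^2(Y_1) \to \deh$. Under the hypothesis $\|L^n\textbf{f}\|_{\h[\textbf{B}]} \to 0$ the decomposition \eqref{decomp} collapses to $\deh = \Theta H^2(Y_1)$ by \thref{nowiener}, and since $\Theta(\z)$ is an isometry for a.e.\ $\z\in\T$ the map $\textbf{f}_1\mapsto \Theta\textbf{f}_1$ is an isometric bijection from $H^2(Y_1)$ onto $\deh$. Composing the embedding of \thref{modeltheorem} with the inverse of this isometry on the second coordinate produces the desired isometric embedding $J:\h[\textbf{B}] \to H^2(X) \oplus H^2(Y_1)$.

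Next I would verify the algebraic identities linking $\textbf{A}$, $\Theta$ and $\Delta$. The outer function $\textbf{A}: \D \to \mathcal{B}(Y, Y_1)$ with $\textbf{A}(\z) = \Theta(\z)^*\Delta(\z)$ a.e.\ was already constructed in the discussion preceding the theorem statement. The crucial point is that for every $\textbf{h} \in H^2(Y)$ the function $\Delta \textbf{h}$ lies in $\deh = \Theta H^2(Y_1)$, so a separability argument applied to a dense sequence of constant vectors in $Y$ yields $\mathrm{Im}\,\Delta(\z) \subset \mathrm{Im}\,\Theta(\z)$ for a.e.\ $\z$. Since $\Theta(\z)\Theta(\z)^*$ is the orthogonal projection onto $\mathrm{Im}\,\Theta(\z)$, this forces $\Theta \Theta^* \Delta = \Delta$ a.e., from which $\textbf{A}^*\textbf{A} = \Delta \Theta \Theta^* \Delta = \Delta^2 = I_Y - \textbf{B}^*\textbf{B}$ on $\T$, and $\Delta \textbf{h} = \Theta \textbf{A}\textbf{h}$ for every $\textbf{h} \in H^2(Y)$.

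With these identities the three assertions follow almost verbatim from \thref{modeltheorem}. For (i), the criterion $\textbf{B}^*\textbf{f} + \Delta \textbf{g} \in \conj{H^2_0(Y)}$ with $\textbf{g} = \Theta \textbf{f}_1$ becomes $\textbf{B}^*\textbf{f} + \textbf{A}^*\textbf{f}_1 \in \conj{H^2_0(Y)}$, uniqueness of $\textbf{f}_1$ being inherited from uniqueness of $\textbf{g}$ together with the pointwise injectivity of $\Theta$. For (ii), since $W=\{0\}$ the term $\conj{\z}\textbf{w}$ in part (ii) of \thref{modeltheorem} vanishes, leaving $JL\textbf{f}=(L\textbf{f},\Theta L\textbf{f}_1)$, which under the identification reads $(L\textbf{f},L\textbf{f}_1)$. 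For (iii), the identity $\Delta\textbf{h} = \Theta \textbf{A}\textbf{h}$ transports the orthogonal complement $\{(\textbf{B}\textbf{h},\Delta\textbf{h}) : \textbf{h}\in H^2(Y)\}$ to $\{(\textbf{B}\textbf{h},\textbf{A}\textbf{h}) : \textbf{h}\in H^2(Y)\}$ in the new coordinates.

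The only nontrivial step I anticipate is the justification of the pointwise inclusion $\mathrm{Im}\,\Delta(\z) \subset \mathrm{Im}\,\Theta(\z)$ a.e.\ from the inclusion $\Delta H^2(Y) \subset \Theta H^2(Y_1)$ of subspaces of $L^2(Y)$. This is a routine measure-theoretic argument exploiting the separability of $Y$, but it is the one ingredient that is not a purely mechanical translation from the proof of \thref{modeltheorem}; everything else is an exercise in bookkeeping through the isometry $\Theta$.
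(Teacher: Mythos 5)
Your proposal is correct and follows essentially the route the paper intends: the paper omits the proof, stating only that it "can be easily deduced from the proof of \thref{modeltheorem} with $\textbf{A}$ playing the role of $\Delta$," and the paragraph preceding the theorem already records the collapse $\text{clos}(\Delta H^2(Y)) = \Theta H^2(Y_1)$ together with the identity $\textbf{A}^*\textbf{A} = \Delta\Theta\Theta^*\Delta = \Delta^2$ that you rederive. Your transport of parts (i)--(iii) through the pointwise isometry $\Theta$, including the range-function argument for $\mathrm{Im}\,\Delta(\z)\subset\mathrm{Im}\,\Theta(\z)$ a.e., is exactly the bookkeeping the authors leave to the reader.
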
	

\subsection{A formula for the norm.} \label{formulasubsection} As pointed out in the introduction, the model of \thref{modeltheorem} has a connection to a useful formula for the norm in Hilbert spaces of analytic functions.

\begin{prop} \thlabel{formula}

Let $\h$ be a Hilbert space of analytic functions which satisfies (A.1')-(A.3'). For $\textbf{f} \in \h$ we have \begin{equation}
\|\textbf{f}\|^2_{\h} = \|\textbf{f}\|^2_2 + \lim_{r \to 1} \int_\T \Big\|z\frac{\textbf{f}(z)-\textbf{f}(r\lambda)}{z-r\lambda}\Big\|^2_{\h} - r^2\Big\|\frac{\textbf{f}(z)-\textbf{f}(r\lambda)}{z-r\lambda}\Big\|^2_{\h} d\m(\lambda). \label{eq57}
\end{equation}
\end{prop}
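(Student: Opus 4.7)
The plan is to compute the integrand in \eqref{eq57} pointwise in $\lambda$ (with $w = r\lambda$), then integrate using the Fourier orthogonality $\int_\T (r\lambda)^j \, d\m(\lambda) = \delta_{j,0}$ on $\T$. The central object is the divided difference $\textbf{g}_w(z) := (\textbf{f}(z) - \textbf{f}(w))/(z - w)$, which a Taylor expansion identifies with $L(I - wL)^{-1}\textbf{f} = \sum_{j \geq 0} w^j L^{j+1}\textbf{f}$; since $L$ is a contraction on $\h$ and $|w| < 1$, this series converges absolutely in $\h$, so $\textbf{g}_w \in \h$.

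Next I would exploit the algebraic identity $z\textbf{g}_w = w\textbf{g}_w + (\textbf{f} - \textbf{f}(w))$, which exhibits $z\textbf{g}_w$ as an element of $\h$ (the constant $\textbf{f}(w)$ lies in $\h$ by (A.3')) and gives
$$\|z\textbf{g}_w\|_\h^2 - |w|^2 \|\textbf{g}_w\|_\h^2 = 2\Re \ip{w\textbf{g}_w}{\textbf{f} - \textbf{f}(w)}_\h + \|\textbf{f} - \textbf{f}(w)\|_\h^2.$$
The crucial simplification then comes from the operator identity $wL(I - wL)^{-1} = (I - wL)^{-1} - I$, which rewrites $w\textbf{g}_w = T_w\textbf{f} - \textbf{f}$ with $T_w := (I - wL)^{-1}$. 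Invoking (A.3') (together with the observation $\textbf{g}_w(0) = L\textbf{f}(w)$) collapses every $\h$-inner product against the constant $\textbf{f}(w)$ into an $X$-inner product, and after a short bookkeeping the mixed quantities $\Re \ip{\textbf{f}(0)}{\textbf{f}(w)}_X$ cancel, leaving the clean pointwise formula
$$\|z\textbf{g}_w\|_\h^2 - |w|^2 \|\textbf{g}_w\|_\h^2 = 2\Re \ip{T_w\textbf{f}}{\textbf{f}}_\h - \|\textbf{f}\|_\h^2 - \|\textbf{f}(w)\|_X^2.$$

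Finally I would integrate in $\lambda \in \T$ with $w = r\lambda$. Expanding $T_w = \sum_{j \geq 0} w^j L^j$ (absolutely convergent in operator norm) and using the Fourier orthogonality gives $\int_\T \ip{T_{r\lambda}\textbf{f}}{\textbf{f}}_\h \, d\m(\lambda) = \|\textbf{f}\|_\h^2$, while $\int_\T \|\textbf{f}(r\lambda)\|_X^2 \, d\m(\lambda) \to \|\textbf{f}\|_2^2$ as $r \to 1^-$; the finiteness of $\|\textbf{f}\|_2$ is ensured by the bound $\|\textbf{f}\|_2 \leq \|\textbf{f}\|_\h$ coming from part (i) of \thref{modeltheorem}. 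Summing the three contributions gives $2\|\textbf{f}\|_\h^2 - \|\textbf{f}\|_\h^2 - \|\textbf{f}\|_2^2 = \|\textbf{f}\|_\h^2 - \|\textbf{f}\|_2^2$ in the limit, which is equivalent to \eqref{eq57}.

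The main obstacle is the bookkeeping in the intermediate identity: one must carefully track whether each inner product sits in $\h$ or in $X$, and spot the cancellation of the $\Re\ip{\textbf{f}(0)}{\textbf{f}(w)}_X$ terms that is responsible for the compactness of the resulting formula. Once the clean pointwise identity is in hand, the $\lambda$-integration step is essentially automatic.
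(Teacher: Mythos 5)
Your proof is correct, but it takes a genuinely different route from the paper's. You work directly from the axioms: the decomposition $z\textbf{g}_w = w\textbf{g}_w + \textbf{f} - \textbf{f}(w)$ together with $w\textbf{g}_w = (I-wL)^{-1}\textbf{f} - \textbf{f}$ and the reproducing property (A.3') yields the pointwise identity $\|z\textbf{g}_w\|_\h^2 - |w|^2\|\textbf{g}_w\|_\h^2 = 2\Re\ip{(I-wL)^{-1}\textbf{f}}{\textbf{f}}_\h - \|\textbf{f}\|^2_\h - \|\textbf{f}(w)\|_X^2$ (I checked the bookkeeping: the $\Re\ip{\textbf{f}(0)}{\textbf{f}(w)}_X$ terms do cancel), and Fourier orthogonality of $\lambda\mapsto\lambda^j$ finishes the job. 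The paper instead derives \eqref{eq57} from the model: in \thref{modelformulaconnection} it computes $JzL_\lambda\textbf{f}$ and $JL_\lambda\textbf{f}$ explicitly and identifies the two pieces of the limit separately, namely $\lim_{r\to1}\int_\T \|zL_{r\lambda}\textbf{f}\|_\h^2 - \|L_{r\lambda}\textbf{f}\|_\h^2\,d\m(\lambda) = \|\textbf{f}_1\|_2^2$ and $\lim_{r\to1}\int_\T(1-r^2)\|L_{r\lambda}\textbf{f}\|_\h^2\,d\m(\lambda) = \|\textbf{w}\|_2^2$, where $J\textbf{f} = (\textbf{f}, \textbf{w}+\Theta\textbf{f}_1)$. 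Your argument is more elementary and essentially self-contained; in fact you do not even need \thref{modeltheorem} for the bound $\|\textbf{f}\|_2 \le \|\textbf{f}\|_\h$, since your integrand is bounded below by $(1-r^2)\|\textbf{g}_{r\lambda}\|_\h^2 \ge 0$ (because $L(z\textbf{g}_w) = \textbf{g}_w$ and $L$ is contractive), so your pointwise identity already forces $\int_\T\|\textbf{f}(r\lambda)\|_X^2\,d\m(\lambda) \le \|\textbf{f}\|_\h^2$ and hence $\textbf{f}\in H^2(X)$. What the paper's longer route buys is the finer splitting of the limit recorded in \thref{modelformulaconnection}, which is used again later (e.g. in \thref{rctheorem} and \thref{nearinvnorm}); your computation proves the proposition as stated but does not recover that refinement.
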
 
\noindent Note that the formula makes sense even if $\h$ is not invariant for $M_z$, since for $\lambda \in \D$ we have $$z\frac{\textbf{f}(z)-\textbf{f}(\lambda)}{z-\lambda} = \frac{z\textbf{f}(z)- \lambda\textbf{f}(\lambda)}{z-\lambda} - \textbf{f}(\lambda) = (1-\lambda L)^{-1}\textbf{f}(z) - \textbf{f}(\lambda) \in \h,$$ and $(1-\lambda L)^{-1}$ exists since $L$ is a contraction on $\h$. Versions of the above formula have been used in a crucial way in several works related to the structure of invariant subspaces, see for example \cite{afr}, \cite{alemanrichtersimplyinvariant} and \cite{ars}. We shall prove the formula by verifying that if $J\textbf{f} = (\textbf{f}, \textbf{g})$ in \thref{modeltheorem}, then the limit in \eqref{eq57} is equal to $\|\textbf{g}\|_2^2$. Actually, we will prove a stronger result than \thref{formula}, one which we will find useful at a later stage. In the next theorem and in the sequel we use the notation $$\L_\lambda := L(1-\lambda L)^{-1}, \lambda \in \D.$$ One readily verifies that $$L_\lambda \textbf{f}(z) = \frac{\textbf{f}(z)-\textbf{f}(\lambda)}{z - \lambda}.$$ 

\begin{thm} \thlabel{modelformulaconnection} Let $J\textbf{f} = (\textbf{f}, \textbf{w} + \Theta \textbf{f}_1)$ as in \thref{modeltheorem}, or $J\textbf{f} = (\textbf{f}, \textbf{f}_1)$ as in \thref{modeltheoremanalytic}. In both cases, we have that 
\begin{enumerate}[(i)]
\item $\|zL_{\lambda}\textbf{f}\|^2_{\h[\textbf{B}]} - \|L_{\lambda}\textbf{f}\|^2_{\h[\textbf{B}]} = \|\textbf{f}_1(\lambda)\|^2_{Y_1} $,
\item $\|\textbf{f}_1\|_2^2 = \lim_{r \to 1} \int_\T \|zL_{r\lambda}\textbf{f}\|^2_{\h[\textbf{B}]} - \|L_{r\lambda}\textbf{f}\|^2_{\h[\textbf{B}]} d\m(\lambda)$,
\item $\|\textbf{w}\|_2^2 = \lim_{r \to 1} \int_\T (1-r^2)\|L_{r\lambda}\textbf{f}\|^2_{\h[\textbf{B}]} d\m(\lambda)$. 
\end{enumerate}
In particular, \eqref{eq57} holds.
\end{thm}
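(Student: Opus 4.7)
The plan is to realize each quantity on the right-hand sides of (i)--(iii) as the norm of a specific component of $J\textbf{f}$ in the model of \thref{modeltheorem}, and to extract these from the left-hand $\h[\textbf{B}]$-norms via elementary $H^2$-identities and a Poisson-kernel computation. The global identity that organizes the argument is
\[
\|\textbf{f}\|^2_{\h[\textbf{B}]} \;=\; \|\textbf{f}\|_2^2 + \|\textbf{w}\|_2^2 + \|\textbf{f}_1\|_2^2,
\]
which follows from the pointwise orthogonality $W \perp \Theta H^2(Y_1)$ on $\T$ together with $\Theta(\z)^*\Theta(\z) = I_{Y_1}$ a.e.; the analytic case covered by \thref{modeltheoremanalytic} is the specialization $W = \{0\}$. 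The first step is to compute $J(zL_\lambda\textbf{f})$. Since $\textbf{B}(0) = 0$, every constant vector $x \in X$ embeds as $Jx = (x,0)$, so subtracting the constant $\textbf{f}(\lambda)$ in $zL_\lambda\textbf{f} = (1-\lambda L)^{-1}\textbf{f} - \textbf{f}(\lambda)$ affects only the first component. Iterating property (ii) of \thref{modeltheorem} yields $JL^n\textbf{f} = (L^n\textbf{f},\ \conj{\z}^n\textbf{w} + \Theta L^n\textbf{f}_1)$, and summing the geometric series in $\lambda$ produces
\[
J\big((1-\lambda L)^{-1}\textbf{f}\big) = \Big((1-\lambda L)^{-1}\textbf{f},\ \tfrac{1}{1-\lambda\conj{\z}}\textbf{w} + \Theta (1-\lambda L)^{-1}\textbf{f}_1\Big),
\]
where $\tfrac{1}{1-\lambda\conj{\z}}\textbf{w}$ stays in $W$ because $W$ is doubly $M_\z$-invariant, so the decomposition \eqref{decomp} is preserved.

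To prove (i), set $\textbf{F} = zL_\lambda\textbf{f}$ so that $L\textbf{F} = L_\lambda\textbf{f}$, and let $\textbf{F}_1 = (1-\lambda L)^{-1}\textbf{f}_1$ be its $H^2(Y_1)$-component. Applying (ii) of \thref{modeltheorem} to $\textbf{F}$ and using that multiplication by $\conj{\z}$ is an isometry on $L^2(Y)$, one reads off
\[
\|\textbf{F}\|^2_{\h[\textbf{B}]} - \|L\textbf{F}\|^2_{\h[\textbf{B}]} = \big(\|\textbf{F}\|_2^2 - \|L\textbf{F}\|_2^2\big) + \big(\|\textbf{F}_1\|_2^2 - \|L\textbf{F}_1\|_2^2\big).
\]
The first parenthesis is $\|\textbf{F}(0)\|^2_X = 0$, while the second equals $\|\textbf{F}_1(0)\|^2_{Y_1}$ by the elementary $H^2$ identity $\|\textbf{h}\|_2^2 - \|L\textbf{h}\|_2^2 = \|\textbf{h}(0)\|^2$. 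A direct evaluation shows $\textbf{F}_1(0) = \textbf{f}_1(\lambda)$, giving (i). Statement (ii) is then immediate by applying (i) at $r\lambda$ and invoking the monotone convergence $\int_\T \|\textbf{f}_1(r\lambda)\|^2_{Y_1}\, d\m(\lambda) \nearrow \|\textbf{f}_1\|_2^2$ as $r \nearrow 1$, valid for any $\textbf{f}_1 \in H^2(Y_1)$.

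For (iii), the same machinery expresses
\[
\|L_{r\lambda}\textbf{f}\|^2_{\h[\textbf{B}]} = \|L_{r\lambda}\textbf{f}\|_2^2 + \int_\T \tfrac{\|\textbf{w}(\z)\|^2}{|1-r\lambda\conj{\z}|^2}\, d\m(\z) + \|L_{r\lambda}\textbf{f}_1\|_2^2.
\]
Multiplying by $(1-r^2)$, integrating in $\lambda$, and switching the order of integration in the middle term via Fubini, the identity $(1-r^2)\int_\T |1-r\lambda\conj{\z}|^{-2} d\m(\lambda) = 1$ (a geometric series / Poisson kernel at the origin) converts it to exactly $\|\textbf{w}\|_2^2$. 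The outer two terms take the form $(1-r^2)\sum_{n\ge 0} r^{2n}\|L^n\textbf{h}\|_2^2$ for $\textbf{h}$ an $H^2$-vector, and they vanish as $r \to 1$ because $\|L^n\textbf{h}\|_2 \to 0$ (split the sum at an index where the tail is small). Adding the identities in (ii) and (iii) to the contribution $\|\textbf{f}\|_2^2$ from the first component of $J\textbf{f}$ reproduces the right-hand side of \eqref{eq57}.

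The main obstacle I expect is the bookkeeping in (iii): one must verify that the scalar multiplications by $\tfrac{1}{1-r\lambda\conj{\z}}$ and $\tfrac{\conj{\z}}{1-r\lambda\conj{\z}}$ do not move the $W$-part of $J\textbf{f}$ outside of $W$, so that the pointwise orthogonality between $W$ and $\Theta H^2(Y_1)$ survives and the $L^2(Y)$-norm genuinely splits into the three summands above. Once this is granted, the rest of the argument is a routine Fubini interchange, the Poisson-kernel evaluation at the origin, and the strong convergence $L^n \to 0$ on $H^2$.
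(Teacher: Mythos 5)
Your proposal is correct and follows essentially the same route as the paper: identify the model components of $zL_\lambda\textbf{f}$ and $L_\lambda\textbf{f}$, use that $J$ is an isometry to split the $\h[\textbf{B}]$-norms into three $L^2$ pieces, and finish (ii) by radial means of $\|\textbf{f}_1\|^2$ and (iii) by Fubini with the Poisson kernel together with $\|L^n\textbf{h}\|_2\to 0$ on $H^2$. The only real variation is that you obtain $J\big((1-\lambda L)^{-1}\textbf{f}\big)$ by iterating part (ii) of \thref{modeltheorem} and summing the Neumann series (correctly noting that $W$ is reducing for $M_\z$ so the $W$-component stays put), whereas the paper verifies the membership $\textbf{B}^*zL_\lambda\textbf{f}+\Delta(\cdots)\in\conj{H^2_0(Y)}$ directly; both are valid and of comparable length.
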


\begin{proof} We shall only carry out the proof in the context of \thref{modeltheorem}, the other case being similar. First, we claim that for $\lambda \in \D$ we have that \begin{equation} JzL_\lambda \textbf{f} = \big(zL_\lambda \textbf{f}, (1-\lambda \conj{\z})^{-1}\textbf{w} + \Theta (zL_\lambda \textbf{f}_1 + \textbf{f}_1(\lambda))\big). \label{eq114} \end{equation} This follows from part (i) of \thref{modeltheorem}. Indeed, we have to check that	
\begin{gather*}
\textbf{B}(\z)^*\z \frac{\textbf{f}(\z)- \textbf{f}(\lambda)}{\z - \lambda} + \Delta (\z) \frac{\textbf{w}(\z)}{1-\lambda \conj{\z}} + \Delta(\z)\Theta(\z)\z \frac{\textbf{f}_1(\z) - \textbf{f}_1(\lambda)}{\z - \lambda} + \Delta(\z)\Theta (\z) \textbf{f}_1(\lambda) \\ = \frac{\textbf{B}(\z)^* \textbf{f}(\z) + \Delta(\z)\textbf{g}(\z)}{1-\lambda \conj{\z}} - \frac{\textbf{B}(\z)^*\textbf{f}(\lambda)}{1-\lambda \conj{\z}} - \Big( \frac{\textbf{A}(\z)^*\textbf{h}(\lambda)}{1-\lambda\conj{\z}} - \textbf{A}(\z)^*\textbf{f}_1(\lambda)\Big)\end{gather*}
lies in $\conj{H^2_0(Y)}$, and this is true since each of the three terms in the last line lies in $\conj{H^2_0(Y)}$. Similarly, we have $$JL_\lambda \textbf{f} = \big(L_\lambda \textbf{f}, \conj{\z}(1-\lambda \conj{\z})^{-1}\textbf{w} + \Theta L_\lambda \textbf{f}_1\big).$$ Actually, this can be seen immediately by applying (ii) of \thref{modeltheorem} to \eqref{eq114}. Since $J$ is an isometry we have that \begin{equation}\|zL_\lambda \textbf{f}\|^2_{\h[\textbf{B}]} = \|zL_\lambda \textbf{f}\|^2_2 + \|(1-\lambda \conj{\z})^{-1}\textbf{w}\|^2_2 + \|zL_\lambda \textbf{f}_1 + \textbf{f}_1(\lambda)\|^2_2 \label{eq111} \end{equation} and \begin{equation}\|L_\lambda \textbf{f}\|^2_{\h[\textbf{B}]} = \|L_\lambda \textbf{f}\|^2_2 + \|\conj{\z}(1-\lambda \conj{\z})^{-1}\textbf{w}\|^2_2 + \|L_\lambda \textbf{f}_1\|^2_2. \label{eq112} \end{equation} The difference of \eqref{eq111} and \eqref{eq112} equals $\|\textbf{f}_1(\lambda)\|^2_{Y_1}$ which gives (i). Part (ii) is immediate from (i). We will deduce part (iii) from \eqref{eq112}. A brief computation involving power series shows that if $T$ is a contraction on a Hilbert space $\h$, then we have $\|T^nx\|_\h \to 0$ if and only if $$\lim_{r\to 1^-} \int_\T (1-r^2)\|T(1-r\lambda T)^{-1}x\|^2_{\h} d\m(\lambda) = 0.$$ We apply this to the $T = L$ acting on the Hardy spaces, and deduce (iii) from \eqref{eq112}: \begin{gather*}\lim_{r \to 1} \int_\T (1-r^2)\|L_{r\lambda}\textbf{f}\|^2_{\h[\textbf{B}]} d\m(\lambda) = \lim_{r \to 1} \int_\T (1-r^2) \|(1-r\lambda \conj{\z})^{-1} \textbf{w}\|^2_2 d\m(\lambda) \\ = \lim_{r \to 1} \int_\T \Bigg(\int_\T (1-r^2)|1-r\lambda \conj{\z}|^{-2} d\m(\lambda) \Bigg)\|\textbf{w}(\z)\|^2_Y d\m(\z) = \|\textbf{w}\|^2_2. \end{gather*}
\end{proof}

\subsection{Examples.} \label{examplessubsec} We end this section by discussing some examples of spaces which satisfy our assumptions, some of which were already mentioned in the introduction. 

\subsubsection{De Branges-Rovnyak spaces.} If $\textbf{B} = b$ is a non-zero scalar-valued function, then $\Delta = (1-|b|^2)^{1/2}$ is an operator on a $1$-dimensional space, and $\text{clos}(\Delta H^2) \subseteq L^2$ is either of the form $\theta H^2$ for some unimodular function $\theta$ on $\T$, or it is of the form $L^2(E) := \{ f \in L^2 : f \equiv 0 \text{ a.e. on } \T \setminus E \}$. The first case corresponds to $b$ which are non-extreme points of the unit ball of $H^\infty$, while the second case corresponds to the extreme points. It is in the first case that the model of \thref{modeltheoremanalytic} applies to $\h(b)$. 

\subsubsection{Weighted $H^2$-spaces.} Let $w = (w_n)_{n \geq 0}$ be a sequence of positive numbers and $H^2_w$ be the space of analytic functions in $\D$ which satisfy \[ \|f\|^2_{H^2_w} := \sum_{n=0}^\infty w_n|f_n|^2 < \infty,\] where $f_n$ is the $n$th Taylor coefficient of $f$ at $z = 0$. If $w_0 = 1$ and $w_{n+1} \geq w_n$ for $n \geq 0$, then $H^2_w$ satisfies (A.1)-(A.3). The reproducing kernel of $H^2_w$ is given by \[k(z,\lambda) = \sum_{n = 0}^\infty \frac{\conj{\lambda^n}z^n}{w_n} = \frac{1 - \sum_{n=1}^\infty (1/w_{n-1} - 1/w_n)\conj{\lambda^n}z^n}{1-\conj{\lambda}z}, \] and thus $H^2_w = \h[\textbf{B}]$ with $\textbf{B}(z) = \big( \sqrt{1/w_{n-1} - 1/w_n}z^n \big)_{n=1}^\infty$. 

\subsubsection{Dirichlet-type spaces.} Let $\mu$ be a positive finite Borel measure on $\cD$. The space $\mathcal{D}(\mu)$ consists of analytic functions which satisfy \[ \|f\|^2_{\mathcal{D}(\mu)} := \|f\|^2_2 + \int_{\T}\int_{\cD} \frac{|f(z) - f(\z)|^2}{|z - \z|^2} d\mu(z) d\m(\z).\] The choice of $d\mu = d\m$ produces the classical Dirichlet space. It is easy to verify directly from this expression that $\mathcal{D}(\mu)$ satisfies axioms (A.1)-(A.3), but it is in general difficult to find an expression for $\textbf{B}$ corresponding to the space as in \thref{kernel}. In the special case that $\mu = \sum_{i=1}^n c_i\delta_{z_i}$ is a positive sum of unit masses $\delta_{z_i}$ at distinct points $z_i \in \cD$ the space $\mathcal{D}(\mu)$ is an $\h[\textbf{B}]$-space of rank $n$. An isometric embedding $J: \mathcal{D}(\mu) \to (H^2)^{n+1}$ satisfying the properties listed in \thref{modeltheoremanalytic} is given by \[f(z) \mapsto \Big(f(z), \frac{f(z)-f(z_1)}{z - z_1}, \ldots, \frac{f(z)-f(z_n)}{z-z_n} \Big). \]

\subsubsection{Cauchy duals.} Let $\h$ be a Hilbert space of analytic functions which contains all functions holomorphic in a neighbourhood of $\cD$, on which the forward shift operator $M_z$ acts as a contraction and such that $\ip{f}{1}_\h = f(0)$ holds for $f \in \h$. Consider the function \[ Uf(\lambda) = \ip{(1-\lambda z)^{-1}}{f(z)}_\h = \sum_{n=0}^\infty \lambda^n \ip{z^n}{f(z)}_{\h}.\] Then $Uf$ is an analytic function of $\lambda$, and $UM_z^*f = LUf$. If $\h^*$ is defined to be the space of functions of the form $Uf$ for $f \in \h$, with the norm $\|Uf\|_{\h^*} = \|f\|_{\h}$, then it is easy to verify that $\h^*$ is a Hilbert space of analytic functions which satisfies (A.1)-(A.3). The space $\h^*$ is the so-called Cauchy dual of $\h$ (see \cite{acppredualsqp}).

\section{Density of functions with continuous extensions to the closed disk} \label{densitysection}
The goal of this section is to prove \thref{conttheorem} and \thref{conttheoremcor}, which are vector-valued generalizations of \thref{thm1} mentioned in the introduction. We will first recall a few facts about the disk algebra $\A$ and the vector-valued Smirnov classes $N^+(Y)$. 

\subsection{Disk algebra, Cauchy transforms and the Smirnov class.} \label{dasubsec}
Let $\A$ denote the disk algebra, the space of scalar-valued analytic functions defined in $\D$ which admit continuous extensions to $\cD$. It is a Banach space if given the norm $\|f\|_\infty = \sup_{z \in \D} |f(z)|$, and the dual of $\A$ can be identified with the space $\C$ of Cauchy transforms of finite Borel measures $\mu$ supported on the circle $\T$. A Cauchy transform $f$ is an analytic function in $\D$ which is of the form $f(z) = C\mu(z) := \int_\T \frac{1}{1-z\conj{\z}} d\mu(\z)$ for some Borel measure $\mu$. The duality between $\A$ and $\C$ is realized by $$\ip{h}{f} = \lim_{r \rightarrow 1^-}\int_\T h(r\z)\conj{f(r\z)} d\m(\zeta) = \int_{\T} h \conj{d\mu}, \quad h \in \A, f = C\mu$$ and the norm $\|f\|_\mathcal{C}$ of $f$ as a functional on $\A$ is given by $\|f\|_\mathcal{C} = \inf_{\mu : C\mu = f} \|\mu\|,$ where $\|\mu\|$ is the total variation of the measure $\mu$. The space $\C$ is continuously embedded in the Hardy space $H^p$ for each $p \in (0,1)$. More precisely, we have for each fixed $p \in (0,1)$ the estimate $\|f\|_p = (\int_\T |f|^p d\m)^{1/p} \leq c_p \|f\|_{\C}.$ As a dual space of $\A$, the space $\C$ can be equipped with the weak-star topology, and a sequence $(f_n)$ converges weak-star to $f \in \C$ if and only if $\sup_n \|f_n\|_{\C} < \infty$ and $f_n(z) \to f(z)$ for each $z \in \D$. See \cite{cauchytransform} for more details.

If $Y$ is a Hilbert space, then the Smirnov class $N^+(Y)$ consists of the functions $\textbf{f}: \D \to Y$ which can be written as $\textbf{f} = \textbf{u}/v$, where $\textbf{u} \in H^\infty({Y})$ and $v: \D \to \mathbb{C}$ is a bounded outer function. In the case $Y = \mathbb{C}$ we will simply write $N^+$. The class $N^+(Y)$ satisfies the following \textit{Smirnov maximum principle}: if $\textbf{f} \in N^+(Y)$, then we have that $\int_\T \|\mathbf{f}(\z)\|_{Y}^2 d\m(\z) < \infty$ if and only if $\mathbf{f} \in H^2(Y)$ (see Theorem A in Section 4.7 of \cite{rosenblumrovnyakhardyclasses}).

\subsection{Proof of the density theorem.} \label{proofsubsec}

	The proof will depend on a series of lemmas. The first two are routine exercises in functional analysis and the proofs of those will be omitted. 

\begin{lemma}
	\thlabel{weakstarapprox}
	Let $B$ be a Banach space, $B'$ be its dual space, and $S \subset B'$ be a linear manifold. If $l \in B'$ annihilates the subspace $\cap_{s \in S} \ker s \subset B$, then $l$ lies in the weak-star closure of $S$.
\end{lemma}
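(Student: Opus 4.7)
The plan is to deduce the result from the Hahn--Banach separation theorem applied in the dual pair $(B, B')$, with $B'$ equipped with its weak-star topology. First I would set $M := \bigcap_{s \in S} \ker s$, a norm-closed linear subspace of $B$, and let $K$ denote the weak-star closure of $S$ in $B'$. Since $S$ is a linear manifold, $K$ is a weak-star closed linear subspace of $B'$, and the aim is to show that $l \in K$.

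I would argue by contradiction, assuming $l \notin K$. Because $K$ is weak-star closed, convex, and non-empty, Hahn--Banach separation in the locally convex space $(B', w^*)$ produces a weak-star continuous linear functional $\Phi$ on $B'$ with $\Phi(l) \neq 0$ and $\Phi \equiv 0$ on $K$. (A linear functional bounded on a linear subspace must vanish on it, so separation from the subspace $K$ automatically forces vanishing there.) The standard description of the weak-star continuous dual of $B'$ then yields an $x \in B$ such that $\Phi(s) = s(x)$ for every $s \in B'$.

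Vanishing of $\Phi$ on $S \subset K$ translates to $s(x) = 0$ for all $s \in S$, i.e., $x \in M$. By the hypothesis that $l$ annihilates $M$, we get $l(x) = 0$, which is the same as $\Phi(l) = 0$, contradicting $\Phi(l) \neq 0$. Hence $l \in K$, as desired. No real obstacle arises; the argument is the usual bipolar-theorem identification of the weak-star closure of a subspace of $B'$ with the annihilator of its pre-annihilator in $B$, and the explicit separation argument is given only because the paper is invoking it in a specific form.
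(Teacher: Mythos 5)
Your argument is correct: it is the standard bipolar/Hahn--Banach separation proof, using that the weak-star continuous functionals on $B'$ are exactly the evaluations at points of $B$, and that separation from a linear subspace forces the separating functional to vanish on it. The paper explicitly omits the proof of this lemma as a routine exercise in functional analysis, and what you have written is precisely the routine argument it has in mind, so there is nothing further to compare.
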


%\begin{proof}
%	If $s$ does not lie in the weak-star closure of $\langle A \rangle$, then there exists a weak-star continuous functional on $X'$ annihilating $\langle A \rangle$ but not $s$. Since the weak-star continuous functionals on $X'$ are given by evaluations at elements of $X$, there exists an $x \in X$ such that $l(x) = 0$ for all $l \in L$, but $s(x) \neq 0$. But this is a contradiction, because then $x \in Y$, so $s$ does not annihilate $Y$. 
%\end{proof}

\begin{lemma} \thlabel{convlemma}
	Let $\{h_j\}$ be a sequence of scalar-valued analytic functions in $\D$, with $\sup_n \|h_n\|_\infty < \infty$, and which converges uniformly on compacts to the function $h$. If the sequence $\{\textbf{g}_j\}_{j=1}^\infty$ of functions in $L^2(Y)$ converges in norm to $\textbf{g}$, then $h_j\textbf{g}_j$ converges weakly in $L^2(Y)$ to $h\textbf{g}$. 
\end{lemma}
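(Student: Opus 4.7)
The plan is to decompose
\[ h_j \textbf{g}_j - h\textbf{g} = h_j(\textbf{g}_j - \textbf{g}) + (h_j - h)\textbf{g} \]
and handle the two pieces separately. The first piece actually converges to zero in $L^2(Y)$ norm, not merely weakly, since
\[ \|h_j(\textbf{g}_j - \textbf{g})\|_2 \leq \|h_j\|_\infty \|\textbf{g}_j - \textbf{g}\|_2 \leq \Big(\sup_k \|h_k\|_\infty\Big) \|\textbf{g}_j - \textbf{g}\|_2 \to 0. \]
So only the second piece demands a genuine weak-convergence argument.

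For that second piece, test against an arbitrary $\textbf{F} \in L^2(Y)$: the claim reduces to
\[ \int_\T (h_j(\zeta) - h(\zeta)) \ip{\textbf{g}(\zeta)}{\textbf{F}(\zeta)}_Y d\m(\zeta) \to 0, \]
and by Cauchy--Schwarz the scalar function $\zeta \mapsto \ip{\textbf{g}(\zeta)}{\textbf{F}(\zeta)}_Y$ lies in $L^1(\T)$. Hence the whole problem reduces to the scalar statement that $h_j \to h$ in the weak-star topology of $L^\infty(\T) = L^1(\T)^*$.

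This final reduction is the step I expect to be the main obstacle, since uniform convergence on compact subsets of $\D$ does not imply almost-everywhere convergence on $\T$ (as $h_j(z) = z^j$ shows), so there is no direct passage to boundary values. The standard way around this is to exploit the uniform bound $\sup_j \|h_j\|_\infty < \infty$, which allows testing on any norm-dense subset of $L^1(\T)$; I would take the trigonometric polynomials. For the Fourier mode $\zeta \mapsto \zeta^n$ with $n \geq 0$ the pairing with $h_j$ equals the Taylor coefficient $\widehat{h_j}(n)$, which converges to $\widehat{h}(n)$ because uniform convergence on a small circle around $0$ combined with the Cauchy integral formula forces convergence of all Taylor coefficients. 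For $n < 0$ the pairing vanishes identically since every $H^\infty$ function has zero negative Fourier coefficients, and the uniform $L^\infty$ bound ensures $h$ is again in $H^\infty$. Combining these mode-by-mode computations with the $L^\infty$ bound yields the desired weak-star convergence and completes the proof.
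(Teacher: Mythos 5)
Your proof is correct. Note that the paper explicitly omits the proof of this lemma, calling it a routine exercise in functional analysis, so there is no argument in the text to compare against; your write-up (splitting off the norm-convergent piece $h_j(\textbf{g}_j-\textbf{g})$, reducing the remainder to weak-star convergence of $h_j$ in $L^\infty(\T)$, and establishing that via the uniform bound, density of trigonometric polynomials in $L^1(\T)$, and convergence of Taylor coefficients for $n\ge 0$ together with vanishing of negative Fourier coefficients) is precisely the standard argument the authors had in mind, and you correctly identify the one non-trivial point, namely that locally uniform convergence in $\D$ gives no direct control of boundary values on $\T$.
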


%\begin{proof} The assumptions imply that $h_j$ tends weakly to $h$ in $H^2$. The sequence $h_j\textbf{g}_j$ is uniformly bounded in $L^2(\T,\h)$-norm, so it will be enough to show that $\ip{h_j\textbf{g}_j}{\textbf{k}} \rightarrow \ip{h\textbf{g}}{\textbf{k}}$ for any bounded $\h$-valued function $\textbf{k}$. We have \begin{align*}
%	|\ip{h_j\textbf{g}_j}{\textbf{k}} - \ip{h\textbf{g}_j}{\textbf{k}}| = |\ip{h_j\textbf{g}_j}{\textbf{k}} - \ip{h_j\textbf{g}}{\textbf{k}} + \ip{h_j\textbf{g}}{\textbf{k}} - \ip{h\textbf{g}}{\textbf{k}}| \\ \leq |\ip{h_j(\textbf{g}_j-\textbf{g})}{\textbf{k}}| + |\ip{(h_j-h)\textbf{g}}{\textbf{k}}| \\ \leq \|h_j\|_\infty \|\textbf{g}_j-\textbf{g}\|_2\|\textbf{k}\|_2 + |\ip{(h_j-h)\textbf{g}}{\textbf{k}}|.
%	\end{align*}
%	The claim now follows, because $h_j \to h$ weakly, and $$\ip{(h_j-h)\textbf{g}}{\textbf{k}} = \int_\T (h_j(\zeta)-h(\zeta))\ip{\textbf{g}(\zeta)}{\textbf{k}(\zeta)} d\m(\zeta),$$ and $\ip{\textbf{g}(\zeta)}{\textbf{k}(\zeta)}$ is a scalar-valued $L^2$-function. 
%\end{proof}

The next two lemmas are more involved. Let $\A^n = \A \times \ldots \times \A$ denote the product of $n$ copies of the disk algebra. The dual of $\A^n$ can then be identified with $\C^n$, the space of $n$-tuples of Cauchy transforms $\textbf{f} = (f^1, \ldots, f^n)$, normed by $\|\textbf{f}\|_{\C^n} = \sum_{i=1}^n \|f^i\|_\C.$ The main technical argument needed for the proof of \thref{conttheorem} is contained in the following lemma.

\begin{lemma} \thlabel{multiplyintoh2lemma}
	Let $\{\textbf{f}_m\}_{m=1}^\infty$ be a sequence in $\C^n$ which converges weak-star to $\textbf{f}$. There exists a subsequence $\{\textbf{f}_{m_k}\}_{k=1}^\infty$ and a sequence of outer functions $M_k: \D \to \mathbb{C}$ satisfying the following properties:
	
	\begin{enumerate}[(i)]
	\item $\|M_k\|_\infty \leq 1$,
	\item $M_k$ converges uniformly on compacts to a non-zero outer function $M$,
	\item $M_k\textbf{f}_{m_k} = (M_kf^1_{m_k}, M_kf^2_{m_k}, \ldots, M_kf^n_{m_k}) \in (H^2)^n$,
	\item the sequence $\{ M_k\mathbf{f_{m_k}}\}_{m=1}^\infty$ converges weakly to $M\mathbf{f}$ in $(H^2)^n$.
	\end{enumerate}
\end{lemma}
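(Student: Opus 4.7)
\textbf{Proof plan for \thref{multiplyintoh2lemma}.}

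The plan is to construct each $M_m$ directly as an outer majorant for the reciprocal of a boundary modulus of $\textbf{f}_m$, and then to pass to a subsequence whose pointwise limit remains outer. For $\zeta \in \T$ set
\[ G_m(\zeta) := \max\Big(1,\, \sum_{i=1}^n |f^i_m(\zeta)|\Big), \]
using boundary values that exist a.e.\ because $\C \subset H^p$ for every $p \in (0,1)$. Since $\sup_m \|\textbf{f}_m\|_{\C^n} < \infty$ and $(a+b)^p \leq a^p + b^p$ for $p<1$, I obtain $\sup_m \int_\T G_m^p\, d\m =: C < \infty$. Let $M_m$ be the outer function with $|M_m(\zeta)| = 1/G_m(\zeta)$ a.e.\ on $\T$; this exists because $\log G_m \geq 0$ and $\int \log G_m\, d\m \leq \tfrac{1}{p}\log C$ by Jensen. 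Then $\|M_m\|_\infty \leq 1$, and the pointwise bound $|M_m f^i_m| \leq 1$ on $\T$ together with $M_m f^i_m \in N^+$ yields $M_m f^i_m \in H^2$ by the Smirnov maximum principle stated in Section~\ref{dasubsec}. This gives (i) and (iii) for the full sequence.

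The subsequence is extracted using \emph{uniform integrability} of $\{\log G_m\}$. The elementary inequality $\log x \leq \tfrac{2}{p}\, x^{p/2}$ for $x \geq 1$ combined with H\"older gives
\[ \int_E \log G_m\, d\m \;\leq\; \tfrac{2}{p}\int_E G_m^{p/2}\, d\m \;\leq\; \tfrac{2}{p}\, C^{1/2}\, |E|^{1/2}, \]
uniformly in $m$, so by Dunford--Pettis there exist a subsequence $m_k$ and some $u \in L^1(\T)$, $u\geq 0$, with $\log G_{m_k} \rightharpoonup u$ weakly in $L^1(\T)$. Set $M_k := M_{m_k}$ and let $M$ be the outer function with $|M|=e^{-u}$ on $\T$. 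Pairing the weak-$L^1$ convergence against the kernel $\frac{\zeta+z}{\zeta-z}$ (bounded and continuous on $\T$ for each fixed $z \in \D$) and exponentiating gives pointwise convergence $M_k(z) \to M(z)$ on $\D$, which Vitali's theorem upgrades to uniform convergence on compact subsets of $\D$ using $\|M_k\|_\infty \leq 1$. Non-vanishing follows from $|M(0)| = \exp(-\int u\, d\m) \geq C^{-1/p} > 0$, proving (ii).

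For (iv), the family $\{M_k \textbf{f}_{m_k}\}$ is uniformly bounded in $(H^2)^n$ (in fact in $(H^\infty)^n$ by $1$). For each $z \in \D$, weak-star convergence in $\C^n$ forces $f^i_{m_k}(z) \to f^i(z)$, while (ii) gives $M_k(z) \to M(z)$, so $(M_k f^i_{m_k})(z) \to (M f^i)(z)$ pointwise on $\D$. Since the reproducing kernels span $H^2$, any bounded $(H^2)^n$-sequence converging pointwise on $\D$ converges weakly to the pointwise limit; lower semicontinuity of the $H^2$-norm also places $M\textbf{f}$ in $(H^2)^n$.

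The principal obstacle is keeping $M$ outer: a priori the weak-$L^1$ limit of $\log G_m\, d\m$ could acquire a singular part, injecting a singular inner factor into $M$ and violating the outer requirement. The uniform integrability of $\{\log G_m\}$ is exactly the ingredient that rules this out, and extracting it from the $\C^n$-bound via $\log x \leq \tfrac{2}{p} x^{p/2}$ and H\"older is the critical analytic step; everything else is a routine combination of Smirnov's maximum principle, Vitali's theorem, and the standard characterization of weak convergence in $H^2$.
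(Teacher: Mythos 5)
Your proof is correct, but it takes a genuinely different route from the paper's at the one step that actually matters, namely keeping the limit function outer. The paper does not prescribe the modulus of the multiplier exactly: for $s_m = \max(\sum_{i=1}^n|f^i_m|^p,1)$ it forms the Herglotz transform $H_m(z)=\int_\T \frac{\z+z}{\z-z}s_m(\z)\,d\m(\z)$ and takes $M_k=1/H_{m_k}$. Each $1/H_m$ has positive real part, and a nonzero locally uniform limit of such functions again has positive real part, hence is automatically outer; the only thing left to check is $M\neq 0$, which follows from the bound $H_m(0)=\int_\T s_m\,d\m\leq C_1\|\textbf{f}_m\|_{\C^n}^p$. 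Your $M_m$, the outer function with exactly prescribed modulus $1/G_m$, carries no such structural property, so you correctly identify the danger that a normal-families limit could acquire a singular inner factor, and you rule it out by establishing uniform integrability of $\{\log G_m\}$ via $\log x\leq \tfrac{2}{p}x^{p/2}$ and Cauchy--Schwarz, then applying Dunford--Pettis to obtain a weak $L^1$ limit $u$ with no singular part and defining $M$ directly from $u$. Both arguments are sound; all the remaining steps (Smirnov maximum principle for (iii), boundedness plus pointwise convergence for weak $H^2$ convergence in (iv)) agree with the paper. The paper's positive-real-part device is shorter and avoids weak compactness in $L^1$ altogether, while your version makes explicit where the outerness could fail and gives the slightly stronger pointwise bound $|M_kf^i_{m_k}|\leq 1$ on $\T$; it also works for any $p\in(0,1)$, whereas the paper's estimate $\int_\T|g^i/H|^2\,d\m\leq\int_\T|g^i|^{2-2p}\,d\m$ forces $p\in(1/2,1)$ so that the exponent $2-2p$ lands in the range where the embedding $\C\subset H^q$ applies.
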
 

\begin{proof} Let $\textbf{g} = (g^1, g^2, \ldots, g^n)$ be an $n$-tuple of Cauchy transforms and for some fixed choice of $p \in (1/2, 1)$ let $$s(\z) = \max(\sum_{i=1}^n |g^i(\zeta)|^p,1 ), \quad \z \in \T.$$ Then $s$ is integrable on the circle and $\int_{\T} s \,d\m  \leq C_1\|\textbf{g}\|_{\C^n}^p,$ where the constant $C_1 > 0$ depends on $p$ and $n$, but is independent of $\textbf{g} \in \mathcal{C}^n$. We let $H$ be the Herglotz transform of $s$, that is $$H(z) = \int_{\T} \frac{\z + z}{\z - z} s(\z) d\m(\z).$$ Note that the real part of $H$ is the Poisson extension of $s(\z)$ to $\D$. This shows that $H$ has positive real part (hence is outer), $|H| \geq s \geq 1$ on $\T$ and $|H(z)| \geq 1$ for all $z \in \D$. We also have that $H(0) = \int_\T s \, d\m \leq C_1\|\textbf{g}\|^p_{\C^n}.$ Let $q = 2 - 2p \in (0,1).$ For each $i \in \{1, \ldots, n\}$ we have the estimate $$\int_\T |g^i/H|^2 d\m \leq \int_\T |g^i/s|^2 d\m \leq \int_\T |g^i|^{2-2p} d\m = \|g^i\|_{q}^{q} \leq C_2 \|g^i\|^q_{\C}.$$ Since the functions $g^i$ are in $\C \subset N^+$, the Smirnov maximum principle implies that $g^i/H \in H^2$, or equivalently $\textbf{g}/H \in (H^2)^n$. Moreover, $\|\textbf{g}/H\|_{(H^2)^n} \leq C\|\textbf{g}\|^q_{\C^n}$, with constant $C > 0$ depending only on the fixed choice of $p$ and the dimension $n$. 

Let now $\{\textbf{f}_m\}_{m=1}^\infty$ be a sequence in $\C^n$ which converges weak-star to $\textbf{f}$, meaning that $\textbf{f}_m$ converges pointwise to $\textbf{f}$ in $\D$, and we have $\sup_{m} \|\textbf{f}_m\|_{\C^n} < \infty$. For each integer $m \geq 1$ we construct the function $H = H_m$ as above. By what we have established above, $\{\textbf{f}_m/H_m\}_{m=1}^\infty$ is a bounded sequence in $(H^2)^n$. Since $\|1/H_m\|_{\infty} \leq 1$ and $H_m(0) \leq C_1\|\textbf{f}_m\|^p_{\C^n}$, there exists a subsequence $\{m_k\}_{k=1}^\infty$ such that $M_k = 1/{H_{m_k}}$ converges uniformly on compacts to a non-zero analytic function $M$. Then $M$ has positive real part, since each of the functions $M_k$ has positive real part, and therefore $M$ is outer. The sequence $\{M_k\textbf{f}_{m_k}\}_{k=1}^\infty$ is bounded in $(H^2)^n$ and converges pointwise to the function $M\textbf{f}$, which is equivalent to weak convergence in $(H^2)^n$.	
\end{proof}

For the rest of the section, let $\h = \h[\textbf{B}]$ be a fixed space of $X$-valued functions, where $X$ is a finite dimensional Hilbert space. Thus $\textbf{B}$ takes values in $\bhk$ for some auxilliary Hilbert space $Y$. As before, set $\Delta(\z) = (I_Y - \textbf{B}(\z)^*\textbf{B}(\z))^{1/2}$ for $\z \in \T$. Fix an orthonormal basis $\{e_i\}_{i=1}^n$ for the finite dimensional Hilbert space $X$. We can define a map from $H^2(X)$ to $(H^2)^n$ by the formula $\textbf{f} \mapsto (f^i)_{i=1}^n$ where the components $f^i$ are the coordinate functions $f^i(z) = \ip{\textbf{f}(z)}{e_i}_X.$ Then a function $\textbf{f} \in H^2(X)$ has a continuous extension to $\cD$ if and only if all of its coordinate functions $f^i$ are contained in the disk algebra $\A$.

\begin{lemma} \thlabel{Sset}
	\thlabel{techlemma} Let $S \subset \mathcal{C}^n \oplus \deh$ be the linear manifold consisting of tuples of the form $$(\textbf{B} \textbf{f}, \Delta \textbf{f})$$ for some $\textbf{f} \in N^+(Y)$. Then $S$ is weak-star closed in $\mathcal{C}^n \oplus \deh$. 
\end{lemma}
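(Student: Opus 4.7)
The plan is to reduce to weak-star sequential closedness via the Krein-Smulian theorem, then to construct $\textbf{f}$ through a Herglotz-multiplier extraction (Lemma~\thref{multiplyintoh2lemma}) preceded by a Banach-Saks reduction, and finally to verify both $\textbf{B}\textbf{f} = F$ and $\Delta\textbf{f} = G$. Since $\mathcal{C}^n \oplus \deh$ is the dual of the separable space $\mathcal{A}^n \oplus \deh$, it suffices to check that whenever $\textbf{f}_k \in N^+(Y)$ and $(\textbf{B}\textbf{f}_k, \Delta \textbf{f}_k) \to (F, G)$ weak-star with uniformly bounded norms, there is some $\textbf{f} \in N^+(Y)$ with $(\textbf{B}\textbf{f}, \Delta \textbf{f}) = (F, G)$. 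A crucial preliminary move is to invoke the Banach-Saks theorem in the Hilbert space $L^2(Y)$ and replace the sequence $\textbf{f}_k$ by the Ces\`{a}ro means of an appropriate subsequence: this yields the additional property $\Delta \textbf{f}_k \to G$ \emph{in $L^2$-norm}, while convex combinations preserve membership in $N^+(Y)$ and weak-star convergence of the first coordinate.

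Next, I apply Lemma~\thref{multiplyintoh2lemma} to the weak-star convergent sequence $\{\textbf{B}\textbf{f}_k\}$ to produce, after a subsequence, outer multipliers $M_k$ with $\|M_k\|_\infty \le 1$ that converge uniformly on compact subsets of $\D$ to a non-zero outer function $M$, and satisfy $M_k\textbf{B}\textbf{f}_k \to MF$ weakly in $H^2(X)$. The pointwise identity $\|\textbf{f}\|_Y^2 = \|\textbf{B}\textbf{f}\|_X^2 + \|\Delta\textbf{f}\|_Y^2$ on $\T$, which follows from $\textbf{B}^*\textbf{B} + \Delta^2 = I_Y$, then bounds $\|M_k\textbf{f}_k\|_{L^2(Y)}^2 = \|M_k \textbf{B}\textbf{f}_k\|_{L^2(X)}^2 + \|M_k\Delta\textbf{f}_k\|_{L^2(Y)}^2$ uniformly in $k$. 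Since $M_k\textbf{f}_k \in N^+(Y)$, the Smirnov maximum principle places $M_k \textbf{f}_k$ in $H^2(Y)$, and a further subsequence extraction gives $\textbf{u}_k := M_k\textbf{f}_k \to \textbf{u}$ weakly in $H^2(Y)$. I then define $\textbf{f} := \textbf{u}/M$, an element of $N^+(Y)$ because $M$ is outer and bounded.

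Verification of $\textbf{B}\textbf{f} = F$ is routine: pointwise in $\D$, the two factorizations $\textbf{B}(z)\textbf{u}_k(z) = M_k(z)\textbf{B}(z)\textbf{f}_k(z)$ converge respectively to $\textbf{B}(z)\textbf{u}(z)$ (from weak $H^2(Y)$-convergence of $\textbf{u}_k$) and to $M(z)F(z)$ (since weak-star convergence in $\mathcal{C}^n$ is pointwise in $\D$), so $\textbf{B}\textbf{u} = MF$ in $\D$, i.e., $\textbf{B}\textbf{f} = F$. The hard part is showing $\Delta\textbf{f} = G$ in $L^2(Y)$, and this is exactly where the Banach-Saks reduction is essential. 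On the one hand, weak $H^2(Y)$-convergence of $\textbf{u}_k$ together with the bounded multiplication operator $\Delta$ gives $\Delta \textbf{u}_k = M_k\Delta\textbf{f}_k \to \Delta\textbf{u}$ weakly in $L^2(Y)$. On the other hand, the strong $L^2$-convergence $\Delta\textbf{f}_k \to G$ combined with $\|M_k\|_\infty \le 1$ and $M_k \to M$ uniformly on compacts allows Lemma~\thref{convlemma} to identify the weak limit of the same sequence $M_k\Delta\textbf{f}_k$ as $MG$. Uniqueness of weak limits then forces $\Delta\textbf{u} = MG$ in $L^2(Y)$. Since $\textbf{u} = M\textbf{f}$ as elements of $N^+(Y)$, their boundary values satisfy $\Delta\textbf{u} = M\Delta\textbf{f}$ a.e.\ on $\T$; dividing out by $M$, which is non-zero a.e.\ as it is outer, yields $\Delta\textbf{f} = G$ a.e., and therefore in $L^2(Y)$. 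Hence $(F, G) = (\textbf{B}\textbf{f}, \Delta\textbf{f}) \in S$.
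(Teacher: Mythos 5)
Your proposal is correct and follows essentially the same route as the paper: Krein--Smulian reduction to sequential closedness, Ces\`aro means (Banach--Saks) to upgrade the second coordinate to norm convergence, the Herglotz-multiplier lemma plus the Smirnov maximum principle to place $M_k\textbf{f}_k$ in $H^2(Y)$, and identification of the limit. The only cosmetic difference is in the endgame: the paper observes that the tuples $(\textbf{B}M_k\textbf{f}_{m_k},\Delta M_k\textbf{f}_{m_k})$ lie in the weakly closed subspace $U=\{(\textbf{B}\textbf{h},\Delta\textbf{h}):\textbf{h}\in H^2(Y)\}$ and concludes in one stroke, whereas you extract a weak limit $\textbf{u}$ of $M_k\textbf{f}_k$ in $H^2(Y)$ and verify the two coordinate identities separately---both are valid.
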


\begin{proof}
Since $\A^n \oplus \deh$ is separable, Krein-Smulian theorem implies that it is enough to check weak-star sequential closedness of the set $S$. Thus, let the sequence \[\{(\textbf{B} \textbf{f}_m, \Delta \textbf{f}_m)\}_{m=1}^\infty = \{(\textbf{h}_m, \textbf{g}_m)\}_{m=1}^\infty \subset \mathcal{C}^n \oplus \deh\] converge in the weak-star topology to $(\textbf{h}, \textbf{g}).$ Then $\{ \textbf{g}_m \}_{m=1}^\infty$ converges weakly in the Hilbert space $\deh$, and by passing to a subsequence and next to the Ces\`aro means of that subsequence, we can assume that the sequence $\{\textbf{g}_m\}_{m=1}^\infty$ converges to $\textbf{g}$ in the norm. By applying \thref{multiplyintoh2lemma} and \thref{convlemma} we obtain a sequence of outer functions $\{M_k\}_{k=1}^\infty$ and an outer function $M$ such that $\{(M_k\textbf{h}_{m_k}, M_k\textbf{g}_{m_k})\}_{k=1}^\infty$ converges weakly in the Hilbert space $H^2(X) \oplus \deh$ to $(M\textbf{h}, M\textbf{g})$. Note that $$(M_k\textbf{h}_{m_k}, M_k\textbf{g}_{m_k}) = (\textbf{B} M_k \textbf{f}_{m_k}, \Delta M_k \textbf{f}_{m_k}),$$ and $$\int_\T \|M_k\textbf{f}_{m_k}\|^2_Y d\m = \int_\T \|\textbf{B} M_k\textbf{f}_{m_k}\|^2_X d\m + \int_\T \|\Delta M_k\textbf{f}_{m_k}\|^2_Y d\m < \infty.$$ Since $M_k\textbf{f}_{m_k}$ is in $N^+(Y)$, the Smirnov maximum principle implies that we have $M_k\textbf{f}_{m_k} \in H^2(Y)$, and consequently the tuples $(\textbf{B}M_k\textbf{f}_{m_k}, \Delta M_k\textbf{f}_{m_k})$ are contained in the closed subspace $U = \{ (\textbf{B}\textbf{h}, \Delta\textbf{h}) : \textbf{h} \in H^2(Y)\}$. It follows that the weak limit $(M\textbf{h}, M\textbf{g})$ is also contained in $U$, and hence $(M\textbf{h}, M\textbf{g}) = (B \textbf{f}, \Delta \textbf{f})$ for some $\textbf{f} \in H^2(Y)$. Then $$(\textbf{h}, \textbf{g}) = \Big(\textbf{B} \frac{\textbf{f}}{M} , \Delta \frac{\textbf{f}}{M}  \Big),$$ where $\textbf{f}/M \in N^+(Y)$.	
\end{proof}

\begin{thm}
\thlabel{conttheorem} Assume that $\h[\textbf{B}]$ consists of functions taking values in a finite dimensional Hilbert space. Then the set of functions in $\h[\textbf{B}]$ which extend continuously to $\cD$ is dense in the space.
\end{thm}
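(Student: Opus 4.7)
The plan is a Hahn--Banach argument that marries the concrete realization of $J\h[\textbf{B}]$ from \thref{modeltheorem} with the weak-star closedness result \thref{Sset}. First I would fix an orthonormal basis of $X$ to identify $H^2(X)$ with $(H^2)^n$, so that an element $\textbf{f} \in H^2(X)$ extends continuously to $\cD$ precisely when its coordinates lie in $\A$. Since $H^2 \subset \C$ continuously, one may view $H^2(X) \oplus \deh$ as embedded in the dual $\C^n \oplus \deh$ of $\A^n \oplus \deh$, with the Banach pairing agreeing with the Hilbert inner product whenever both are defined. Write $\mathcal{E}$ for the subspace of $\textbf{f} \in \h[\textbf{B}]$ with continuous extension to $\cD$; by the above identification these are precisely the $\textbf{f} \in \h[\textbf{B}]$ for which $J\textbf{f} \in \A^n \oplus \deh$.

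Since $\h[\textbf{B}]$ is a Hilbert space, it suffices to show that any $(\textbf{h}, \textbf{k}) \in J\h[\textbf{B}]$ orthogonal to $J\mathcal{E}$ vanishes. The crux is to represent such a $(\textbf{h}, \textbf{k})$ in the form $(\textbf{B}\textbf{f}, \Delta \textbf{f})$ for some $\textbf{f} \in N^+(Y)$. For this I would invoke \thref{weakstarapprox} with $B = \A^n \oplus \deh$ and the weak-star closed set $S$ from \thref{Sset}: the pair $(\textbf{h}, \textbf{k}) \in \C^n \oplus \deh$, viewed as a functional on $B$, lies in $S$ provided it annihilates the preannihilator $\bigcap_{s \in S}\ker s$. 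If $(\textbf{u}, \textbf{v}) \in B$ lies in this preannihilator, then pairing against the particular elements $(\textbf{B}\textbf{f}, \Delta \textbf{f}) \in S$ with $\textbf{f} \in H^2(Y) \subset N^+(Y)$ and rewriting via the adjoint identities yields $\int_\T \langle \textbf{B}^*\textbf{u} + \Delta \textbf{v}, \textbf{f}\rangle_Y \, d\m = 0$ for every $\textbf{f} \in H^2(Y)$, so that $\textbf{B}^*\textbf{u} + \Delta \textbf{v} \in \conj{H^2_0(Y)}$. The uniqueness clause of \thref{modeltheorem}(i) then gives $(\textbf{u}, \textbf{v}) = J\textbf{u}$ for some $\textbf{u} \in \mathcal{E}$, and the assumed orthogonality of $(\textbf{h}, \textbf{k})$ to $J\mathcal{E}$ supplies the required annihilation.

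With $(\textbf{h}, \textbf{k}) = (\textbf{B}\textbf{f}, \Delta \textbf{f})$ and $\textbf{f} \in N^+(Y)$ in hand, Smirnov's maximum principle closes the argument. The boundary identity $\textbf{B}^*\textbf{B} + \Delta^2 = I_Y$ on $\T$ gives $\|\textbf{f}(\z)\|_Y^2 = \|\textbf{B}(\z)\textbf{f}(\z)\|_X^2 + \|\Delta(\z)\textbf{f}(\z)\|_Y^2$ almost everywhere, and integrability of the right-hand side (from $\textbf{h} \in H^2(X)$ and $\textbf{k} \in L^2(Y)$) forces $\textbf{f} \in L^2(Y) \cap N^+(Y) = H^2(Y)$. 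Consequently $(\textbf{h}, \textbf{k}) \in \{(\textbf{B}\textbf{h}, \Delta \textbf{h}) : \textbf{h} \in H^2(Y)\} = (J\h[\textbf{B}])^\perp$ by \thref{modeltheorem}(iii); combined with $(\textbf{h}, \textbf{k}) \in J\h[\textbf{B}]$, this forces $(\textbf{h}, \textbf{k}) = 0$.

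The main obstacle in this scheme is not the duality reduction itself but the weak-star closedness of $S$ asserted in \thref{Sset}, which rests on the multiplier construction of \thref{multiplyintoh2lemma}: one needs outer functions $M_k$ that are simultaneously uniformly bounded, convergent to a non-zero outer limit, and able to pull Cauchy-transform tuples back into $(H^2)^n$. Once this ingredient is granted, the density theorem follows from the essentially formal reduction sketched above; the finite-dimensionality of $X$ enters only to permit the identification $H^2(X) \cong (H^2)^n$ and the resulting componentwise use of the disk-algebra/Cauchy-transform duality.
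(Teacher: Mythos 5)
Your proposal is correct and follows essentially the same route as the paper: reduce to showing that an element of $J\h[\textbf{B}]$ annihilating $J(\A^n\cap\h[\textbf{B}])$ lies, via \thref{weakstarapprox} and the weak-star closedness of $S$ from \thref{Sset}, in $\{(\textbf{B}\textbf{f},\Delta\textbf{f}):\textbf{f}\in N^+(Y)\}$, and then upgrade $\textbf{f}$ to $H^2(Y)$ by the Smirnov maximum principle so that the element sits in $(J\h[\textbf{B}])^\perp$ and hence vanishes. The only cosmetic difference is that you apply \thref{weakstarapprox} directly to the full manifold $S$ and show its preannihilator is contained in $J\mathcal{E}$, whereas the paper applies it to the submanifold $\{l_{\textbf{h}}:\textbf{h}\in H^2(Y)\}$ and identifies the preannihilator exactly; the two bookkeepings are equivalent.
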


\begin{proof} Let $K = H^2(X) \oplus \deh$. Recall from \thref{modeltheorem} that the space $\h[\textbf{B}]$ is equipped with an isometric embedding $J$ where the tuple $J\textbf{f} = (\textbf{f},\textbf{g}) \in K$ is uniquely determined by the requirement for it be orthogonal to \[ U = \{ (\textbf{B}\textbf{h}, \Delta \textbf{h} : \textbf{h}\in H^2(Y)\} \subset K. \] We identify functions $\textbf{f} \in \h[\textbf{B}]$ with their coordinates $(f^1, \ldots, f^n)$ with respect to the fixed orthonormal basis of $X$. Now assume that $\textbf{f} \in \h[\textbf{B}]$ is orthogonal to any function in $\h[\textbf{B}]$ which extends continuously to $\cD$, i.e., that $\textbf{f}$ is orthogonal to $\h[\textbf{B}] \cap \A^n$. We shall show that $J\textbf{f} = (\textbf{B} \textbf{h}, \Delta \textbf{h})$ for some $\textbf{h} \in H^2(Y)$, which implies that $\textbf{f} = 0$. Consider $J(\A^n \cap \h[\textbf{B}])$ as a subspace of $\A^n \oplus \deh$. For each $\textbf{h} \in H^2(Y)$, let $$l_\textbf{h} = (\textbf{B} \textbf{h}, \Delta \textbf{h}) \in \mathcal{C}^n \oplus \deh$$ be a functional on $\A^n \oplus \deh$, acting as usual by integration on the boundary $\T$. We claim that $$J(\A^n \cap \h[\textbf{B}]) = \cap_{\textbf{h} \in H^2(Y)} \ker l_\textbf{h}.$$ Indeed, if $\textbf{f} \in \A^n \cap \h[\textbf{B}]$, then for any functional $l_\textbf{h}$ we have $$l_\textbf{h}(J\textbf{f}) = \ip{J\textbf{f}}{(\textbf{B} \textbf{h}, \Delta \textbf{h})} = 0,$$ because $J\textbf{f}$ is orthogonal to $U$. Conversely, if the tuple $(\textbf{f},\textbf{g}) \in \A^n \oplus \deh$ is contained in $\cap_{\textbf{h} \in H^2(Y)} \ker l_\textbf{h}$, then $(\textbf{f},\textbf{g}) \in K$ is orthogonal to $U$, and hence $\textbf{f} \in \A \cap \h[\textbf{B}]$ by \thref{modeltheorem}. Now, viewed as an element of $\mathcal{C}^n \oplus \deh$, the tuple $J\textbf{f}$ annihilates $J(\A^n \cap \h[\textbf{B}])$, and so by \thref{weakstarapprox} lies in the weak-star closure of linear manifold of functionals of the form $l_\textbf{h}$. Thus \thref{Sset} implies that $J\textbf{f} = (\textbf{B} \textbf{h}, \Delta \textbf{h})$ for some $\textbf{h} \in N^+(Y)$. The Smirnov maximum principle and the computation $$\int_\T \|\textbf{h}\|^2_Y d\m(\z) = \int_\T \|\textbf{B}\textbf{h}\|^2_X d\m  + \int_\T \|\Delta \textbf{h}\|^2_Y d\m < \infty$$ show that $\textbf{h} \in H^2(Y)$. Hence $J\textbf{f} \in (JH(\textbf{B})^\perp$, so that $\textbf{f} = 0$ and the proof is complete.	
\end{proof}

\thref{thm1} of the introduction now follows as a consequence of the next result, which is an easy extension of \thref{conttheorem}.

\begin{cor} \thlabel{conttheoremcor} Assume that $\h[\textbf{B}]$ consists of functions taking values in a finite dimensional Hilbert space. If $M$ is any $L$-invariant subspace of $\h[\textbf{B}]$, then the set of functions in $M$ which extend continuously to $\cD$ is dense in $M$.
\end{cor}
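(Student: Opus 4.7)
The plan is to reduce the statement to \thref{conttheorem} itself by enlarging $M$ so that axioms (A.1')-(A.3') hold on the enlarged space. Concretely, set $M' := M + X$, identifying $X$ with the finite-dimensional subspace of constant $X$-valued functions inside $\h[\textbf{B}]$. Since $\dim X = n < \infty$, the sum $M' = M + X$ is closed in $\h[\textbf{B}]$; since $L$ annihilates constants, $LM' = LM \subseteq M \subseteq M'$, so $M'$ is $L$-invariant. The inner product inherited from $\h[\textbf{B}]$ makes $M'$ a Hilbert space of analytic functions satisfying (A.1')-(A.3'): the first two axioms are inherited from $\h[\textbf{B}]$, and (A.3') holds because the constants, which now belong to $M'$, retain the reproducing property $\ip{\textbf{f}}{x}_{M'} = \ip{\textbf{f}(0)}{x}_X$.

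Applying \thref{conttheorem} to $M'$ then produces, for a given $\textbf{f}_0 \in M \subseteq M'$, a sequence $\phi_n \in M' \cap \A^n$ converging in $\h[\textbf{B}]$-norm to $\textbf{f}_0$. To extract approximants inside $M$, I would strip off a constant from each $\phi_n$. Let $C := M \cap X$, viewed as a subspace of $X$, and let $C^{\perp}$ denote its orthogonal complement in $X$. Then $M \cap C^\perp \subseteq C \cap C^\perp = \{0\}$ and $M' = M + X = M + C + C^\perp = M + C^\perp$, so $M' = M \oplus C^\perp$ is a topological direct sum: it is closed because $C^\perp$ is finite-dimensional, and the associated component projections onto $M$ and $C^\perp$ are therefore continuous. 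Writing $\phi_n = m_n + c_n$ in this decomposition, and noting that the corresponding decomposition of the limit is $\textbf{f}_0 = \textbf{f}_0 + 0$, continuity of the projections yields $c_n \to 0$ in $X$ and $m_n \to \textbf{f}_0$ in $\h[\textbf{B}]$-norm.

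The approximant $m_n = \phi_n - c_n$ lies in $M$ by construction, and it extends continuously to $\cD$ because $\phi_n$ does and $c_n$ is a constant vector in $X$; hence $m_n \in M \cap \A^n$, which proves the density. The only subtle point is the preliminary enlargement $M \rightsquigarrow M'$: because $M$ itself need not contain any constants, it cannot be plugged into \thref{conttheorem} directly, and the finite-dimensionality of $X$ is precisely what guarantees both that $M'$ remains a closed $L$-invariant subspace of $\h[\textbf{B}]$ and that the constant corrections $c_n$ lying in $C^\perp$ can be controlled by continuous finite-rank projections.
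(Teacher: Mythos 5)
Your proof is correct and follows essentially the same route as the paper: enlarge $M$ to $M^+ = M + X$ by adjoining the constants, note this is a closed $L$-invariant subspace satisfying (A.1')--(A.3') to which \thref{conttheorem} applies, and then strip the constant corrections off the approximants using the boundedness of the skewed projection onto $M$. Your use of $C^\perp$ to handle the case $M \cap X \neq \{0\}$ is a slightly more careful bookkeeping of the same idea (the paper simply invokes the closed graph theorem for the projection $\textbf{f}+c \mapsto \textbf{f}$), so there is nothing substantive to add.
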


\begin{proof}
If $M$ contains the constant vectors, then \thref{kernel} applies, and hence $M$ is of the type $\h(\textbf{B}_0)$ for some contractive function $\textbf{B}_0$. Then the result follows immediately from \thref{conttheorem}. If constant vectors are not contained in $M$, then let $$M^+ = \{ \textbf{f} + c : \textbf{f} \in M, c \in X\}.$$ The subspace $M^+$ is closed, as it is a sum of a closed subspace and a finite dimensional space. Moreover, closed graph theorem implies that the skewed projection $P: M^+ \to M$ taking $\textbf{f}+c$ to $\textbf{f}$ is bounded. The theorem holds for $M^+$, so if $\textbf{f} \in M$, then there exists constants $c_n$ and functions $\textbf{f}_n \in M$ such that $\textbf{h}_n = \textbf{f}_n + c_n$ is continuous on $\cD$, and $\textbf{h}_n$ tends to $\textbf{f}$ in the norm of $\hil$. Consequently, the functions $\textbf{f}_n = \textbf{h}_n - c_n$ are continuous on $\cD$, and we have that $\textbf{f}_n = P\textbf{h}_n$ tends to $P\textbf{f} = \textbf{f}$ in the norm of $\hil$.
\end{proof}

\section{Applications of the density theorem} \label{applications1}

We temporarily leave the the main subject in order to present applications of \thref{conttheorem} and \thref{conttheoremcor}. All Hilbert spaces of analytic functions will be assumed to satisfy (A.1)-(A.3).

\subsection{$M_z$-invariant subspaces.} \thref{cor-beurling1} stated in the introduction is now an easy consequence of \thref{conttheorem}. We restate the theorem for the reader's convenience.

\begin{cor} Let $\h$ be a Hilbert space of analytic functions which satisfies (A.1)-(A.3)  and is invariant for the forward shift $M_z$. For a closed $M_z$-invariant subspace $\M$ of $\h$ with $\dim \M\ominus M_z\M=n<\infty$, let $\varphi_1,\ldots\varphi_n$ be an orthonormal basis in $\M\ominus M_z\M$, and denote by $\phi$ the corresponding row operator-valued function. Then \begin{equation}\M=\phi \h[\textbf{C}], \end{equation}
where $ \h[\textbf{C}]$ consists of $\mathbb{C}^n$-valued functions and the mapping $g \mapsto \phi g$ is an isometry from $\h[\textbf{C}]$ onto $\M$. Moreover, 
\begin{equation} \{\sum_{i=0}^n\varphi_iu_i:~u_i\in \A,~1\le i\le n\}\cap \h\end{equation}
is a dense subset of $\M$.
\end{cor}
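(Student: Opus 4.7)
The plan is to reduce the corollary to the vector-valued density theorem \thref{conttheoremcor} by realizing $\M$ as the image $\phi \h[\textbf{C}]$ of an isometric multiplication from an auxiliary $\mathbb{C}^n$-valued space $\h[\textbf{C}]$ satisfying (A.1')--(A.3').

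I would first construct $\h[\textbf{C}]$ as the Hilbert space completion of polynomial $\mathbb{C}^n$-tuples $p=(p_1,\ldots,p_n)$ under $\|p\|^2 := \|\phi p\|_\M^2$, where $\phi p = \sum_i \varphi_i p_i$. Positive-definiteness holds: writing $p_i(z)=\sum_k a_{i,k} z^k$ and $e_k := \sum_i a_{i,k}\varphi_i \in \mathcal{E} := \M \ominus M_z\M$, the identity $\phi p = \sum_k M_z^k e_k = 0$ together with $e_0 \in \mathcal{E} \perp M_z\M$ forces $e_0 = 0$; iterating this using $LM_z=I$ yields $e_k=0$ for all $k$, whence $a_{i,k}=0$ by orthonormality of $\{\varphi_i\}$. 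The map $M_\phi: p \mapsto \phi p$ extends by continuity to an isometry from $\h[\textbf{C}]$ into $\h$ with image contained in $\M$; surjectivity onto $\M$ is the generalized Beurling--Lax step, for which one shows that the orthogonal complement of the closed span of $\{z^k\varphi_i\}_{k\ge 0,\,1\le i\le n}$ inside $\M$ must vanish, using the finite-rank hypothesis in an essential way.

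The axioms for $\h[\textbf{C}]$ are verified via the identity $\phi\cdot Lp = L(\phi(p - p(0)))$. For polynomial tuples,
\begin{equation*}
\|Lp\|^2_{\h[\textbf{C}]} = \|L(\phi(p-p(0)))\|^2_\h \le \|\phi(p-p(0))\|^2_\h = \|p\|^2_{\h[\textbf{C}]} - \|p(0)\|^2_{\mathbb{C}^n},
\end{equation*}
where the inequality uses contractivity of $L$ on $\h$ and the last equality uses the orthogonal decomposition $\phi p = \phi p(0) + (\phi p - \phi p(0))$ with $\phi p(0)\in\mathcal{E}$ and $\phi p - \phi p(0) \in M_z\M$ (the projection of $\phi p$ onto $\mathcal{E}$ equals $\phi p(0)$ by orthonormality of the $\varphi_i$). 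This establishes (A.2') and, via the orthogonality $\ip{p}{c}_{\h[\textbf{C}]} = \ip{p(0)}{c}_{\mathbb{C}^n}$ for constant $c\in\mathbb{C}^n$, also (A.3'). Boundedness of point-evaluations (A.1') then follows from the reproducing-kernel factorization $k_\M(z,\lambda) = \phi(z) k_{\h[\textbf{C}]}(z,\lambda)\phi(\lambda)^*$ dictated by $M_\phi$, so that $k_{\h[\textbf{C}]}$ has the form \eqref{kerneleq} by \thref{kernel}, with a contractive analytic $\textbf{C}$.

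For the density statement \eqref{cyclic}, I invoke \thref{conttheoremcor} with $\h[\textbf{C}]$ as the trivial $L$-invariant subspace of itself, yielding a dense set of tuples in $\h[\textbf{C}]\cap\A^n$. Pushing through $M_\phi$ gives a dense subset of $\M$ of the form $\{\sum_i\varphi_i u_i : u_i\in\A\}\cap\M$, which is contained in $\{\sum_i\varphi_i u_i : u_i\in\A\}\cap\h$ since $\M\subseteq\h$, completing the proof. The chief technical obstacle is the Beurling--Lax surjectivity of $M_\phi$ in the first paragraph: the wandering argument is straightforward when $M_z|_\M$ is isometric, but in our setting $M_z|_\M$ is merely expansive, and closing the iteration uses the finite-rank hypothesis crucially.
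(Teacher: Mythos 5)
Your reduction to \thref{conttheoremcor} is the right idea, and your verification of (A.2') and (A.3') on polynomial tuples (via $\phi\cdot Lp = L(\phi(p-p(0)))$ and the orthogonal splitting $\phi p = \phi p(0) + \phi(p-p(0))$ with $\phi p(0)\in\M\ominus M_z\M$) is correct. The gap is the surjectivity step you defer to a ``generalized Beurling--Lax'' argument. Building $\h[\textbf{C}]$ as the completion of polynomial tuples means $M_\phi$ maps it isometrically onto the \emph{closure of} $\operatorname{span}\{z^k\varphi_i : k\ge 0,\ 1\le i\le n\}$, and the assertion that this closure is all of $\M$ is exactly the wandering subspace property for the expansive operator $M_z|\M$ --- equivalently, the statement that $\A$ can be replaced by polynomials in \eqref{cyclic}. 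The paper explicitly flags that replacement as an open question in this generality and only resolves it when the ambient space $\h[\textbf{B}]$ has finite \emph{rank} (\thref{polydense}); the corollary assumes only finite \emph{index} $\dim\M\ominus M_z\M=n<\infty$, with no restriction on the rank of $\h$, so the ``finite-rank hypothesis'' you invoke to close the iteration is not available. For merely expansive operators the wandering subspace property is a genuinely hard (and in general delicate) matter, so this step cannot be waved through.

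The paper's proof avoids the issue by never constructing $\h[\textbf{C}]$ from polynomials. It compresses the backward shift to $\M$, setting $L^\phi f = z^{-1}\big(f - \sum_{i=1}^n\ip{f}{\varphi_i}_\h\varphi_i\big)$, which is a contraction on $\M$ (projection onto $M_z\M$ followed by $L$), and defines $Uf(\lambda) = \big(\ip{(1-\lambda L^\phi)^{-1}f}{\varphi_i}_\h\big)_{i=1}^n$. A resolvent computation forces the numerator of $(1-\lambda L^\phi)^{-1}f$ to vanish at $z=\lambda$, yielding $f(\lambda)=\phi(\lambda)Uf(\lambda)$; thus $\h[\textbf{C}]:=U\M$, normed so that $U$ is unitary, maps onto $\M$ \emph{by construction}, with no wandering subspace input. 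Since $UL^\phi = LU$, the space $U\M$ inherits (A.1')--(A.3') from the contractivity of $L^\phi$, and \thref{conttheorem} applied to $U\M$ gives a dense set of $\A^n$-valued elements, which transports to \eqref{cyclic}. If you want to salvage your construction, you would need to enlarge your polynomial completion to this resolvent-generated space; as written, the argument proves the corollary only for those $\M$ that happen to satisfy the wandering subspace property.
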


\begin{proof}
For any $f \in  \M$ we have that $f(z) - \sum_{i=1}^n \ip{f}{\phi_i}_\h \phi_i(z) \in M_z\M$. Thus the operator $L^\phi: \M \to \M$ given by \[ L^\phi f(z) = \frac{f(z) - \sum_{i=1}^n \ip{f}{\phi_i}_\h \phi_i(z)}{z} \] is well-defined, and it is a contraction since it is a composition of a projection with the contractive operator $L$. A straightforward computation shows that for $\lambda \in \D$ the following equation holds \[ (1-\lambda L^\phi)^{-1} f(z) =  \frac{zf(z) - \lambda \sum_{i=1}^n \ip{(1-\lambda L^\phi)^{-1}f}{\phi_i}_\h\phi_i(z)}{z-\lambda}.\] Thus the analytic function in the numerator on the right-hand side above must have a zero at $z = \lambda$. It follows that $f(\lambda) = \sum_{i=1}^n \ip{(1-\lambda L^\phi)^{-1}f}{ \phi_i}_\h \phi_i(\lambda)$. Consider now the mapping $U$ taking $f \in \M$ to the vector $Uf(\lambda) = \Big( \ip{(1-\lambda L^\phi)^{-1}}{\phi_i}_\h \Big)_{i=1}^n$ and let $\M_0 = U\M$ with the norm on $\M_0$ which makes $U:\M \to \M_0$ a unitary mapping. Then $\M_0$ is a space of $\mathbb{C}^n$-valued analytic functions which satisfies (A.1')-(A.3') and to which \thref{conttheorem} applies. The claims in the statement follow immediately from this. 
\end{proof}

\subsection{Reverse Carleson measures.} A finite Borel measure on $\cD$ is a \textit{reverse Carleson measure} for $\h$ if there exists a constant $C > 0$ such that the estimate \begin{equation}\|f\|^2_\h \leq C \int_{\cD} |f(z)|^2 d\mu(z) \label{RCineq}\end{equation} holds for $f$ which belong to some dense subset of $\h$ and for which the integral on the right-hand side makes sense, e.g. by the existence of radial boundary values of $f$ on the support of the singular part of $\mu$ on $\T$. For the class of spaces considered in this paper it is natural to require, due to \thref{conttheorem}, that \eqref{RCineq} holds for all functions in $\h$ which admit continuous extensions to $\cD$. 

Our main result in this context characterizes the existence of a reverse Carleson measures for spaces which are invariant for $M_z$. If such a measure exists, then we can moreover identify one which is in a sense minimal. 

\begin{thm} \thlabel{revcarl2}
Let $\h$ be invariant for $M_z$. Then the following are equivalent.
\begin{enumerate}[(i)]
	\item $\h$ admits a reverse Carleson measure.
	\item  $$\sup_{0 < r < 1} \int_\T \Bigg\| \frac{\sqrt{1-|r\lambda|^2}}{1-r\conj{\lambda}z}\Bigg\|^2_{\h} d\m(\lambda) < \infty.$$
	\item If $k$ is the reproducing kernel of $\h$, then$$\sup_{0 < r < 1} \int_\T \frac{1}{(1-|r\lambda|^2)k(r\lambda,r\lambda)} d\m(\lambda)  < \infty.$$
\end{enumerate}
	
If the above conditions are satisfied, then $$h_1(\lambda) := \lim_{r \rightarrow 1} \Bigg\| \frac{\sqrt{1-|r\lambda|^2}}{1-r\conj{\lambda}z}\Bigg\|^2_{\h} $$ and $$h_2(\lambda) := \lim_{r \rightarrow 1} \frac{1}{(1-|r\lambda|^2)k(r\lambda, r\lambda)}$$ define reverse Carleson measures  for $\hil$. Moreover, if $\nu$ is any reverse Carleson measure for $\hil$ and $v$ is the density of the absolutely continuous part of the restriction of $\nu$ to $\T$, then $h_1d\m$ and $h_2d\m$ have the following minimality property: there exist constants $C_i > 0, i =1,2$ such that $$h_i(\lambda) \leq C_i v(\lambda)$$ for almost every $\lambda \in \T.$
\end{thm}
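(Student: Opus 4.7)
My plan is to establish the cyclic chain of implications $(i) \Rightarrow (ii) \Rightarrow (iii) \Rightarrow (i)$, identifying along the way $h_2\, d\m$ (and, by comparison, $h_1\, d\m$) as concrete reverse Carleson measures, and then to derive the minimality claim by testing against the Szegő-kernel family at a fixed boundary point.

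For $(i) \Rightarrow (ii)$, I would test the reverse Carleson inequality against the normalized Szegő kernels $f_{r,\lambda}(z) = \sqrt{1-r^2}/(1-r\conj\lambda z)$, which lie in $\A \cap \h$ for each $r \in (0,1)$ and $\lambda \in \T$. Integrating $\|f_{r,\lambda}\|_\h^2 \leq C \int_{\cD} |f_{r,\lambda}(\z)|^2\, d\mu(\z)$ against $d\m(\lambda)$ and applying Fubini together with the elementary identity $\int_\T |1-r\conj\lambda\z|^{-2}\, d\m(\lambda) = (1-r^2|\z|^2)^{-1}$ yields the uniform bound in (ii) by $C\mu(\cD)$. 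For $(ii) \Rightarrow (iii)$, the reproducing-kernel Cauchy--Schwarz inequality $|f(w)|^2 \leq \|f\|_\h^2\, k_\h(w,w)$ applied at $w = r\lambda$ to the function $z \mapsto (1-r\conj\lambda z)^{-1}$, which evaluates to $(1-r^2)^{-1}$ at $z = r\lambda$, yields the pointwise comparison $1/((1-r^2)\, k_\h(r\lambda, r\lambda)) \leq \|f_{r,\lambda}\|_\h^2$. Integrating transfers (ii) to (iii), and in the radial limit also establishes $h_2 \leq h_1$ almost everywhere on $\T$.

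The principal implication is $(iii) \Rightarrow (i)$, with the candidate reverse Carleson measure being $h_2\, d\m$. I first apply Fatou's lemma together with the uniform bound in (iii) to ensure that $h_2(\lambda)$ exists almost everywhere and defines an $L^1(\T)$ function. To establish the reverse Carleson inequality itself, I invoke the isometric decomposition from \thref{modeltheorem}: for $f \in \h \cap \A$, writing $Jf = (f, \textbf{g})$ yields $\|f\|_\h^2 = \|f\|_{H^2}^2 + \|\textbf{g}\|_{L^2(Y)}^2$, and the defining constraint $\textbf{B}^* f + \Delta\, \textbf{g} \in \conj{H^2_0(Y)}$, combined with the spectral decomposition of $\Delta(\lambda)^2 = I_Y - \textbf{B}^*(\lambda)\textbf{B}(\lambda)$ on the one-dimensional range of $\textbf{B}^*(\lambda)$ (where $\Delta(\lambda)^2$ acts as multiplication by the scalar $1/h_2(\lambda)$), supplies the pointwise estimate $\|\textbf{g}(\lambda)\|_Y^2 \leq C\, |f(\lambda)|^2\, (h_2(\lambda) - 1)$ almost everywhere on $\T$. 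Integrating and adding $\|f\|_{H^2}^2 = \int_\T |f|^2\, d\m$ gives $\|f\|_\h^2 \leq C\int_\T |f|^2 h_2\, d\m$ on the dense subset $\h \cap \A$, and the inequality extends to $\h$ by density via \thref{thm1}.

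For the minimality claim, fix $\lambda_0 \in \T$ and apply the reverse Carleson inequality for an arbitrary reverse Carleson measure $\nu$ to $f_{r,\lambda_0}$. The left-hand side $\|f_{r,\lambda_0}\|_\h^2$ tends to $h_1(\lambda_0)$ by definition, while the right-hand side $C \int_{\cD} (1-r^2)/|1-r\conj{\lambda_0}\z|^2\, d\nu(\z)$ converges almost everywhere to $Cv(\lambda_0)$ by the classical Fatou theorem on boundary values of Poisson integrals: the singular boundary part of $\nu|_\T$ contributes zero a.e., and the contribution from the interior of $\D$ vanishes due to the factor $(1-r^2)$. This yields $h_1 \leq Cv$ a.e.\ on $\T$, and the comparison $h_2 \leq h_1$ from the implication $(ii) \Rightarrow (iii)$ then gives $h_2 \leq Cv$. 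The principal technical obstacle is justifying the pointwise bound $\|\textbf{g}(\lambda)\|_Y^2 \leq C|f(\lambda)|^2(h_2(\lambda) - 1)$, which depends on the interplay between the spectral decomposition of $\Delta(\lambda)$ and the structure of $\deh$ described by \eqref{decomp}; this step is precisely where the almost-everywhere finiteness of $h_2$, guaranteed by (iii), becomes essential.
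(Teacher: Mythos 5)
Your implications (i)$\Rightarrow$(ii) and (ii)$\Rightarrow$(iii) are essentially the paper's (the paper phrases the first via the mean value property of the Poisson integral of $\nu|_\T$ and the second via an orthogonal decomposition of $(1-\conj{\lambda}z)^{-1}$, but your Fubini computation and reproducing-kernel Cauchy--Schwarz give the same inequalities; note only that the membership $f_{r,\lambda}\in\h$ itself needs the argument the paper gives, namely that a reverse Carleson measure forces $H^\infty\subset\h$ via weak compactness applied to bounded polynomial approximants). The genuine gap is in (iii)$\Rightarrow$(i). The estimate you rely on, $\|\textbf{g}(\lambda)\|_Y^2\le C|f(\lambda)|^2(h_2(\lambda)-1)$ a.e.\ on $\T$, does not follow from $\textbf{B}^*f+\Delta\textbf{g}\in\conj{H^2_0(Y)}$ and is false: that condition only kills the analytic projection of $\textbf{B}^*f+\Delta\textbf{g}$, it is not the pointwise identity $\textbf{B}^*(\lambda)f(\lambda)+\Delta(\lambda)\textbf{g}(\lambda)=0$, so no pointwise comparison of $\|\textbf{g}(\lambda)\|_Y$ with $|f(\lambda)|$ is available. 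Concretely, for $\textbf{B}=b=z/\sqrt{2}$ one has $Y=\mathbb{C}$, $\Delta\equiv 1/\sqrt{2}$, $h_2\equiv 2$, and the membership condition forces $\textbf{g}=-Lf$; taking $f(z)=1-z$ gives $\textbf{g}\equiv 1$ while $|f(\lambda)|^2=|1-\lambda|^2$ is arbitrarily small near $\lambda=1$, so the claimed bound fails on a set of positive measure for every constant $C$. The paper's route to (iii)$\Rightarrow$(i) is different and is what you should use: (iii) gives $(1-\sum_i|b_i|^2)^{-1}\in L^1(\T)$, hence an outer function $w$ with $|w|^2=1-\sum_i|b_i|^2=1/h_2$ on $\T$; one checks that $k_\h(z,\lambda)-\conj{w(\lambda)}w(z)/(1-\conj{\lambda}z)$ is a positive-definite kernel, so $wH^2$ is contractively contained in $\h$, whence $\|f\|_\h^2\le\int_\T|f|^2h_2\,d\m$ for $f\in wH^2$; and $wH^2$ is dense because $\h\cap\A\subset wH^2$ (Smirnov maximum principle, using $1/w\in H^2$) and $\h\cap\A$ is dense by \thref{conttheorem}.

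Two secondary issues. Fatou's lemma only controls the $\liminf$; the existence of the limit defining $h_2$ comes from the fact that $\sum_i|b_i|^2$ is a bounded subharmonic function and so has radial limits a.e., not from (iii). And in the minimality argument the interior part of $\nu$ does not obviously vanish in the limit: $\int_\D\frac{1-r^2}{|1-r\conj{\lambda_0}\z|^2}\,d\nu(\z)$ can be as large as $(1-r^2)^{-1}\nu(\{r\lambda_0\})$, so dominated convergence is unavailable without further hypotheses. The paper disposes of this at the outset by proving that $\nu|_\T$ is itself a reverse Carleson measure, applying the reverse Carleson inequality to $z^nf$, using $\|z^nf\|_\h\ge\|f\|_\h$, and letting $n\to\infty$; after that reduction your Fatou-theorem argument for $h_1\le C_1v$, and the comparison $h_2\le h_1$ from (ii)$\Rightarrow$(iii), are exactly the paper's conclusion.
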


\begin{proof}(i) $\Rightarrow$ (ii): Let $\nu$ be a reverse Carleson measure for $\h$. First, we show that we can assume that $\nu$ is supported on $\T$. For this, we will use the inequality $$\|z^nf\|^2_\h \leq C \int_\D |z^nf(z)|^2 d\nu(z) + C \int_{\T} |f(z)|^2 d\nu(z).$$ Since $Lzf = f$ and $L$ is a contraction, it follows that $\|zf\|_\h \geq \|f\|_\h$, and thus letting $n$ tend to infinity in the above inequality we obtain $$ \|f\|^2_\h \leq C \int_{\T} |f(z)|^2 d(\nu|\T)(z).$$ Thus we might replace $\nu$ by $\nu|\T$, as claimed. Next, we note that $H^\infty \subset \h$. Indeed, $\h$ contains $1$ and is $M_z$-invariant, thus contains the polynomials. If $p_n$ is a uniformly bounded sequence of polynomials converging pointwise to $f \in H^\infty$, then the existence of a reverse Carleson measure ensures that the norms $\|p_n\|_\h$ are uniformly bounded, and thus a subsequence of $\{p_n\}$ converges weakly to $f \in \h$. Thus, the function $z \mapsto \frac{1}{1-\conj{\lambda}z}$ is contained in $\h$ for each $\lambda \in \D$. Define $$H(\lambda) =: \int_\T \frac{1-|\lambda|^2}{|1-\conj{\lambda}z|^2} d\nu(z), \quad \lambda \in \D,$$ which is positive and harmonic in $\D$, and since $\nu$ is a reverse Carleson measure for $\hil$, there exists a constant $C > 0$ such that \begin{equation} \Bigg\|\frac{\sqrt{1-|\lambda|^2}}{1-\conj{\lambda}z}\Bigg\|_\h^2 \leq CH(\lambda). \label{RCineq2} \end{equation} The implication now follows from the mean value property of harmonic functions. 

(ii) $\Rightarrow$ (iii): There exists an orthogonal decomposition $$\frac{1}{1-\conj{\lambda}z} = \frac{1}{1-|\lambda|^2}\frac{k(\lambda, z)}{k(\lambda, \lambda)} + g(z),$$ where $g$ is some function which vanishes at $\lambda$. Thus $$\Big\|\frac{1}{1-\conj{\lambda}z}\Big\|_\h^2 = \frac{1}{(1-|\lambda|^2)^2k(\lambda,\lambda)} + \|g\|^2$$ and consequently \begin{equation}\frac{1}{(1-|\lambda|^2)k(\lambda,\lambda)} \leq \Bigg\|\frac{\sqrt{1-|a|^2}}{1-\conj{\lambda}z}\Bigg\|_\h^2. \label{RCineq4} \end{equation} Thus (iii) follows from (ii). 

(iii) $\Rightarrow$ (i): If $\h = \h[\textbf{B}]$ with $\textbf{B} = (b_i)_{i=1}^\infty$, then let $w$ be the outer function with boundary values satisfying $|w(\z)|^2 = 1-\sum_{i\in I} |b_i(\z)|^2$. The existence of such a function is ensured by (iii). The space $M = wH^2 = \{f = wg : g \in H^2 \}$ normed by $\|f\|_{M} = \|g\|_2$ is a Hilbert space of analytic functions with a reproducing kernel given by  $$K_{M}(\lambda, z) = \frac{\conj{w(\lambda)}w(z)}{1-\conj{\lambda}z}.$$ It is not hard to verify that $K = k_\h - K_M$ is a positive-definite kernel. Then $k_\h = K + K_M$, and hence $M$ is contained contractively in $\h$. Thus for any function $f \in M$ we have that $\|f\|^2_{\hil} \leq \|f\|^2_M = \int_\T \frac{|f|^2}{|w|^2} d\m.$, and $|w|^{-2}d\m$ will be a reverse Carleson measure if $M$ is dense in $\hil$. But $1/w \in H^2$ by (iii), and thus $\h \cap \A \subset M$, so $M$ is indeed dense in $\h$. 

The limits defining $h_1$ and $h_2$ exist as a consequence of general theory of boundary behaviour of subharmonic functions. The inequality $h_1(\lambda) \leq C_1v(\lambda)$ is seen immediately from \eqref{RCineq2} and $h_1(\lambda) \leq C_2v(\lambda)$ is then seen from \eqref{RCineq4}.
\end{proof}

We remark that if $\h$ is not $M_z$-invariant, then the space might admit a reverse Carleson measure even though (iii) is violated. An example is any $L$-invariant proper subspace of $H^2$.

An application of part (iii) of \thref{revcarl2} to $\h = \h(b)$, with $b$ non-extreme, lets us deduce a result essentially contained in \cite{revcarlesonross}, namely that $\h(b)$ admits a reverse Carleson measure if and only if $(1-|b|^2)^{-1} \in L^1(\T)$, and the measure $\mu = (1-|b|^2)^{-1}d\m$ is then a minimal reverse Carleson measure in the sense made precise by the theorem. A second application is to Dirichlet-type spaces. Recall from Section \ref{examplessubsec} that for $\mu$ a positive finite Borel measure supported on $\cD$, the Hilbert space $\mathcal{D}(\mu)$ is defined as the completion of the analytic polynomials under the norm $$\|f\|^2_{\mathcal{D}(\mu)} = \|f\|^2_2 + \int_{\T} \int_{\cD} \frac{|f(z)-f(\lambda)|^2}{|z-\lambda|^2}  d\mu(z) d\m(\lambda).$$

\begin{cor}
The space $D(\mu)$ admits a reverse Carleson measure if and only if $$\int_{\cD} \frac{d\mu(z)}{1-|z|^2} < \infty.$$ The minimal reverse Carleson measure (in the sense of \thref{revcarl2}) is given by $d\nu = hd\m$ where $$h(\lambda) = 1 + \int_{\cD} \frac{d\mu(z)}{|1 - \conj{\lambda}z|^2}, \lambda \in \T.$$
\end{cor}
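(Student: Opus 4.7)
The plan is to apply \thref{revcarl2} to $\h = \mathcal{D}(\mu)$ and reduce both claims to one explicit evaluation: the $\mathcal{D}(\mu)$-norm of the normalized Szeg\H{o} kernels $g_{r,\lambda}(z) := \sqrt{1-r^2}/(1-r\conj{\lambda}z)$ of $H^2$ at the point $r\lambda$, for $\lambda \in \T$ and $0 < r < 1$.

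First I would rewrite the defining formula for $\|f\|^2_{\mathcal{D}(\mu)}$, by swapping the order of integration, as
$$\|f\|^2_{\mathcal{D}(\mu)} = \|f\|_2^2 + \int_{\cD}\|L_z f\|_2^2\, d\mu(z),$$
recognizing the inner integral over $\T$ in the definition as precisely $\|L_z f\|_2^2$. A routine algebraic manipulation gives $L_{z_0}g_{r,\lambda}(\zeta) = r\conj{\lambda}\sqrt{1-r^2}\,[(1-r\conj{\lambda}\zeta)(1-r\conj{\lambda}z_0)]^{-1}$, and using $\int_\T|1-r\conj{\lambda}\zeta|^{-2}\,d\m(\zeta) = (1-r^2)^{-1}$ for $\lambda \in \T$ I would arrive at the key identity
\begin{equation}\|g_{r,\lambda}\|^2_{\mathcal{D}(\mu)} = 1 + r^2\int_{\cD}\frac{d\mu(z_0)}{|1-r\conj{\lambda}z_0|^2}. \tag{$\star$}\end{equation}
Integrating $(\star)$ over $\lambda \in \T$ and swapping order via Fubini with the same Poisson identity in the $\lambda$-variable gives
$$\int_\T \|g_{r,\lambda}\|^2_{\mathcal{D}(\mu)}\,d\m(\lambda) = 1 + r^2\int_{\cD}\frac{d\mu(z_0)}{1-r^2|z_0|^2}.$$
Since $r^2/(1-r^2|z_0|^2)$ increases monotonically in $r$ to $1/(1-|z_0|^2)$, monotone convergence shows that the supremum over $r \in (0,1)$ of the right-hand side is finite exactly when $\int_{\cD}d\mu(z)/(1-|z|^2) < \infty$. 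By condition (ii) of \thref{revcarl2}, this proves the equivalence.

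Assume now the integrability condition, so $h_1 d\m$ is a reverse Carleson measure for $\mathcal{D}(\mu)$ by \thref{revcarl2}. Passing to $r \to 1^-$ in $(\star)$ and applying Fatou's lemma to the integral on the right yields $h_1(\lambda) \geq 1 + \int_{\cD}d\mu(z)/|1-\conj{\lambda}z|^2$ for a.e.\ $\lambda \in \T$. Equality a.e.\ then follows by comparing $\T$-integrals: Fatou applied to the a.e.\ convergent sequence $\|g_{r,\lambda}\|^2_{\mathcal{D}(\mu)}$ gives $\int_\T h_1\,d\m \leq \lim_{r\to 1^-}\int_\T\|g_{r,\lambda}\|^2_{\mathcal{D}(\mu)}\,d\m(\lambda) = 1 + \int_{\cD}d\mu(z)/(1-|z|^2)$, which is precisely the $\T$-integral of the proposed density. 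The pointwise inequality combined with equality of the integrals forces equality a.e. The only real subtlety throughout is the interchange of limits, but in each instance this reduces to monotone or dominated convergence with visibly integrable majorants provided by the integrability hypothesis.
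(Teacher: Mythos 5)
Your proof is correct and follows essentially the same route as the paper: the key identity $(\star)$ is exactly the paper's computation $(1-|\lambda|^2)\|k_\lambda\|^2_{\mathcal{D}(\mu)} = 1 + \int_{\cD}|\lambda|^2|1-\conj{\lambda}z|^{-2}\,d\mu(z)$ evaluated at $r\lambda$, and the rest is the same appeal to condition (ii) of \thref{revcarl2} together with Fubini. You merely spell out the limit interchanges (monotone convergence for the equivalence, Fatou plus comparison of $\T$-integrals to identify $h_1$ with the stated density) that the paper leaves implicit, and these are all handled correctly.
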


\begin{proof} The space $\mathcal{D}(\mu)$ satisfies the assumptions of \thref{revcarl2}. A computation shows that if $k_\lambda(z) = \frac{1}{1-\conj{\lambda}z}$, then $$(1-|\lambda|^2)\|k_\lambda\|^2_{\mathcal{D}(\mu)} = 1 + \int_{\cD} \frac{|\lambda|^2}{|1-\conj{\lambda}z|^2}d\mu(z).$$ The claim now follows easily from (ii) of \thref{revcarl2} and Fubini's theorem.
\end{proof}

It is interesting to note that the condition on $\mu$ above holds even in cases when $\mathcal{D}(\mu)$ is strictly contained in $H^2$ (see \cite{alemanhabil}).

Our last result in the context of reverse Carleson measures is a non-existence result which answers in particular a question posed in \cite{revcarlesonross}.

\begin{thm}\thlabel{rctheorem}	
	Assume that the identity $$\|Lf\|^2_{\h} = \|f\|^2_{\h} - |f(0)|^2$$ holds in $\h$. If $\h$ admits a reverse Carleson measure, then $\h$ is isometrically contained in the Hardy space $H^2$. 
\end{thm}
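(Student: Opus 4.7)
My plan has three stages. First, I extract a structural consequence of the norm identity via the model of \thref{modeltheorem}. Write $Jf = (f, w + \Theta f_1)$ with $w \in W$ and $f_1 \in H^2(Y_1)$. Since $\Theta^\ast w = 0$ pointwise on $\T$ and $\Theta$ is fibrewise isometric there, $\|w + \Theta f_1\|_2^2 = \|w\|_2^2 + \|f_1\|_2^2$, and similarly $\|\bar{\zeta} w + \Theta L f_1\|_2^2 = \|w\|_2^2 + \|L f_1\|_2^2$. Using the intertwining $JLf = (Lf, \bar{\zeta} w + \Theta L f_1)$ from part (ii) of \thref{modeltheorem} together with the elementary identity $\|g\|_{H^2}^2 - \|Lg\|_{H^2}^2 = \|g(0)\|^2$,
\begin{equation*}
\|f\|_\h^2 - \|Lf\|_\h^2 \;=\; |f(0)|^2 + \|f_1(0)\|^2.
\end{equation*}
The hypothesized identity forces $f_1(0) = 0$ for every $f \in \h$; since the $f_1$-component of $L^n f$ is $L^n f_1$, iterating gives $\widehat{f_1}(n) = 0$ for all $n \geq 0$, whence $f_1 \equiv 0$. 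Consequently $Jf = (f, w)$ and $\|f\|_\h^2 = \|f\|_2^2 + \|w\|_2^2$, so $\h$ is contractively contained in $H^2$; the isometric conclusion is equivalent to showing $w \equiv 0$.

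Next, I combine the norm formula of \thref{modelformulaconnection}(iii) with the reverse Carleson hypothesis. Since $f_1 \equiv 0$, part (iii) reduces to
\begin{equation*}
\|w\|_2^2 \;=\; \lim_{r \to 1} \int_\T (1-r^2) \|L_{r\lambda} f\|_\h^2 \, d\m(\lambda).
\end{equation*}
For $f \in \h \cap \A$, which is dense in $\h$ by \thref{conttheorem}, each $L_{r\lambda} f = L(I - r\lambda L)^{-1}f$ also lies in $\h \cap \A$, so reverse Carleson yields $\|L_{r\lambda}f\|_\h^2 \leq C\int_{\cD} |L_{r\lambda}f|^2 d\mu$. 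Multiplying by $(1-r^2)$, integrating over $\lambda \in \T$, and swapping the order of integration,
\begin{equation*}
\|w\|_2^2 \;\leq\; C \lim_{r \to 1} \int_{\cD} I_r(z) \, d\mu(z), \quad I_r(z) \;=\; (1-r^2) \int_\T |L_{r\lambda}f(z)|^2 \, d\m(\lambda).
\end{equation*}
For every $z \in \cD$, $I_r(z) \to 0$ as $r \to 1$: on $\T$, the identity $(1-r^2)/|z - r\lambda|^2 = P(rz, \lambda)$, the Poisson kernel at $rz$, represents $I_r(z)$ as the Poisson integral of $|f(z) - f(r\lambda)|^2$ which concentrates at $\lambda = z$ and vanishes by continuity of $f$, with uniform bound $4\|f\|_\infty^2$; on $\D$, Parseval converts the inner integral to $\sum_n r^{2n}|L^{n+1}f(z)|^2$, which tends Abel-wise to $0$ since $L^{n+1}f(z) \to 0$, with bound $\|f\|_2^2/(1-|z|^2)$.

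The hard part is promoting these pointwise limits to the $\mu$-integral. On $\T$, the uniform bound $4\|f\|_\infty^2$ and the finiteness of $\mu|_\T$ settle matters by dominated convergence. On $\D$, the bound $\|f\|_2^2/(1-|z|^2)$ may fail to be $\mu$-integrable, so I split $\mu|_\D$ at a radius $\rho < 1$: on $\{|z|<\rho\}$ bounded convergence applies with the uniform bound $\|f\|_2^2/(1-\rho^2)$, while on the annulus $\{\rho \leq |z|<1\}$ I combine the vanishing $\mu(\{\rho \leq |z|<1\}) \to 0$ with a uniform-in-$r$ bound for $I_r$ on the annulus, which follows from the Poisson-type inequality $(1-r^2)/|z-r\lambda|^2 \leq 2(1-r^2)/|1-r\bar{z}\lambda|^2$ valid once $|z|$ is not too close to $r$. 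Coupling $\rho$ to $r$ appropriately closes the argument; producing this uniform bound on the annulus is the crux of the proof. Once $\int_{\cD} I_r\, d\mu \to 0$, $\|w\|_2 = 0$ on a dense subspace, and by continuity of the $\h$-norm throughout $\h$, so $\|f\|_\h = \|f\|_2$ and $\h \hookrightarrow H^2$ isometrically.
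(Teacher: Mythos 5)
Your first two stages are sound and follow the paper's strategy. The paper also begins by killing the analytic component $\textbf{f}_1$ of $J f$ using the norm identity (it does so by noting that the identity applied to $zL_\lambda f$ gives $\|zL_\lambda f\|_\h = \|L_\lambda f\|_\h$, so the limit in \thref{modelformulaconnection}(ii) vanishes; your route through $\|f\|_\h^2 - \|Lf\|_\h^2 = |f(0)|^2 + \|f_1(0)\|_{Y_1}^2$ and iteration with $L^n$ is a correct and slightly more elementary alternative), and it likewise reduces the theorem to showing that the limit in \thref{modelformulaconnection}(iii) is zero, i.e.\ that $\|\textbf{w}\|_2 = 0$, via the reverse Carleson inequality applied to $L_{r\lambda}f$ for $f \in \h \cap \A$.

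The gap is in stage three, exactly where you place it. Your splitting reduces everything to a bound for $I_r$ on the annulus $\{\rho \le |z| < 1\}$ that is uniform in $r$, but the inequality you invoke, $(1-r^2)/|z - r\lambda|^2 \le 2(1-r^2)/|1 - r\conj{z}\lambda|^2$, cannot hold there: the identity $|z - r\lambda|^2 = |1 - r\conj{z}\lambda|^2 - (1-|z|^2)(1-r^2)$ shows that the ratio of the two sides degenerates (the left side even blows up) as $z \to r\lambda$, and for any fixed $\rho < 1$ the annulus contains the entire circle $|z| = r$ once $r > \rho$, so ``$|z|$ not too close to $r$'' cannot be arranged by any coupling of $\rho$ to $r$ of the kind you describe. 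Note also that the region $|z| \le r$ is in fact harmless: writing $L^{n+1}f(z) = \int_\T f(\lambda)\conj{\lambda}^{\,n+1}(1-\conj{\lambda}z)^{-1}\,d\m(\lambda)$ and applying Bessel's inequality to $\lambda \mapsto f(\lambda)/(1-\conj{\lambda}z)$ gives
\begin{equation*}
I_r(z) \le (1-r^2)\sum_{n \ge 0}|L^{n+1}f(z)|^2 \le \frac{(1-r^2)\|f\|_\infty^2}{1-|z|^2} \le \|f\|_\infty^2 \quad\text{for } |z| \le r,
\end{equation*}
so the genuine difficulty is confined to the thin annulus $r < |z| < 1$, on which your proposal supplies no valid estimate. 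The paper faces the same interchange of limit and $\mu$-integral and resolves it by asserting that the functions $G_r$ (your $I_r$) are uniformly bounded on all of $\cD$ and invoking dominated convergence; to complete your argument you must actually produce such a bound, or at least an $r$-independent $\mu$-integrable majorant, on $r < |z| < 1$ --- which is precisely the step you have deferred.
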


\begin{proof}
	We will use \thref{modeltheorem} and \thref{modelformulaconnection}. The limit in part (ii) of \thref{modelformulaconnection} vanishes, because the norm identity implies that $\|zL_\lambda f\| = \|L_\lambda f\|_{\h}$. It will thus suffice to show that the limit in part (iii) of said theorem also vanishes, namely that \begin{gather} \lim_{r \to 1} \int_{\T} (1-r^2)\|L_{r\lambda}f\|^2_{\h} d\m(\lambda) = 0. \label{ineq2}\end{gather} If $f \in \h$ is continuous in $\cD$ then so is $L_{r\lambda}f$, and if $\mu$ is a reverse Carleson measure for $\h$, then we have the estimate
	\begin{gather}
	\lim_{r \to 1}\int_{\T} (1-r^2)\|L_{r\lambda}f\|^2_{\h} d\m(\lambda) \nonumber \\ \leq C \limsup_{r\to 1}\int_{\cD} \int_\T \frac{1-r^2}{|\z - r\lambda|^2}|f(\z) - f(r\lambda)|^2 d\m(\lambda)  d\mu(\z) \label{ineq3}
	\end{gather} Since $f$ is continuous and bounded in $\cD$, we readily deduce from standard properties of the Poisson kernel and bounded analytic functions that the functions $$G_r(\z) = \int_\T \frac{1-r^2}{|\z - r\lambda|^2}|f(\z) - f(r\lambda)|^2 d\m(\lambda)$$ are uniformly bounded in $\cD$ and that $\lim_{r \to 1} G_r(\z) = 0$ for all $\z \in \cD$. Dominated convergence theorem now implies that the limit in \eqref{ineq3} is 0, and so \eqref{ineq2} holds.
\end{proof}

From part (ii) of \thref{modeltheorem} we easily deduce that the condition on $L$ of \thref{rctheorem} is equivalent to $\Theta H^2(Y_1) = \{0\}$ in \eqref{decomp}. In the case that $\textbf{B} = b$ is a scalar-valued function, this occurs if and only if $b$ is an extreme point of the unit ball of $H^\infty$. Thus an extreme point $b$ which is not an inner function cannot generate a space $\h(b)$ which admits a reverse Carleson measure. We remark also that \thref{rctheorem} holds, with the same proof, even when $\h$ consists of functions taking values in a finite dimensional Hilbert space $X$, where the norm identity then instead reads $\|L\textbf{f}\|^2 = \|\textbf{f}\|^2 - \|\textbf{f}(0)\|^2_X$, and definition of reverse Carleson measure is extended naturally to the vector-valued setting.

\subsection{Formula for the norm in a nearly invariant subspace.} A space $\M$ is \textit{nearly invariant} if whenever $\lambda \in \D$ is not a common zero of the functions in $\M$ and $f(\lambda) = 0$ for some $f \in \M$, then $\frac{f(z)}{z-\lambda} \in \M$. If $\h$ is $M_z$-invariant, then an example of a nearly invariant subspace is any $M_z$-invariant subspace for which $\dim \M \ominus M_z\M = 1$. The concept of a nearly invariant subspace has appeared in \cite{hittanulus}, and have since been used as a tool in solutions to numerous problems in operator theory. 

We will now prove a formula for the norm of functions contained in a nearly invariant subspace $\M$ which is similar to \thref{formula} but which is better suited for exploring the structure of $\M$. 

\begin{prop} \thlabel{nearinvnorm} Let $\M \subseteq \h$ be a nearly invariant subspace and $k$ be the common order of the zero at $0$ of the functions in $\M$. Let $\phi \in \M$ be the function satisfying $\ip{f}{\phi}_\h = \frac{f^{(k)}(0)}{\phi^{(k)}(0)}$ for all $f \in \M$, $L^\phi: \M \to \M$ be the contractive operator given by $$L^\phi f(z) = \frac{f(z)-\ip{f}{\phi}_\h\phi(z)}{z},$$ and $L^\phi_\lambda = L^\phi(1-\lambda L^\phi)^{-1}, \lambda \in \D$. If the sequence $\{L^n\}_{n=1}^\infty$ converges to zero in the strong operator topology, then \begin{equation}
\|f\|^2_{\h} = \|f/\phi\|^2_2 + \lim_{r \to 1} \int_{\T} \|zL^\phi_{r\lambda} f\|^2_{\h}-\|L^\phi_{r\lambda} f\|^2_{\h} d\m(\lambda). \label{eq221}
\end{equation}
\end{prop}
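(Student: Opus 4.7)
The plan is to conjugate $L^\phi$ on $\mathcal{M}$ onto the usual backward shift on an auxiliary Hilbert space of analytic functions satisfying (A.1)--(A.3), and then apply \thref{modelformulaconnection}. Define $U : \mathcal{M} \to \mathrm{Hol}(\D)$ by
$$Uf(\lambda) := \ip{(1-\lambda L^\phi)^{-1}f}{\phi}_\h, \quad \lambda \in \D.$$
Writing $g_\lambda := (1-\lambda L^\phi)^{-1}f$, the identity $g_\lambda - \lambda L^\phi g_\lambda = f$ combined with the definition of $L^\phi$ rearranges to $(z-\lambda)g_\lambda(z) = zf(z) - \lambda \ip{g_\lambda}{\phi}_\h \phi(z)$; evaluating at $z = \lambda$ gives the key pointwise identity
$$Uf(\lambda) = f(\lambda)/\phi(\lambda) \quad \text{wherever } \phi(\lambda) \neq 0.$$
In particular, $\ip{\phi}{\phi}_\h = \phi^{(k)}(0)/\phi^{(k)}(0) = 1$ forces $L^\phi\phi = 0$, so $U\phi \equiv 1$.

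Set $\mathcal{N} := U\mathcal{M}$ with norm $\|Uf\|_\mathcal{N} := \|f\|_\h$; injectivity of $U$ on the open dense set $\D\setminus\phi^{-1}(0)$ makes $U$ a unitary onto $\mathcal{N}$. I would then check that $\mathcal{N}$ satisfies (A.1)--(A.3) and that $L$ on $\mathcal{N}$ is unitarily conjugate to $L^\phi$ via $U$. The intertwining $LUf = UL^\phi f$ follows from the telescoping computation $\lambda^{-1}(\ip{g_\lambda}{\phi} - \ip{f}{\phi}) = \ip{L^\phi g_\lambda}{\phi}$, which carries contractivity of $L^\phi$ across to $L$. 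Boundedness of evaluation is immediate from the definition of $U$. Finally, $\ip{Uf}{1}_\mathcal{N} = \ip{f}{\phi}_\h = Uf(0)$ encodes the reproducing property of the constant $1 = U\phi$.

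The hypothesis on $\{L^n\}$ transfers through $U$ into $L^n \to 0$ strongly on $\mathcal{N}$, so \thref{nowiener} ensures $W = \{0\}$ in the decomposition \eqref{decomp} for $\mathcal{N}$. Applying the three parts of \thref{modelformulaconnection} to $Uf\in\mathcal{N}$, with the (iii)-term $\|\mathbf{w}\|_2$ vanishing, yields
$$\|Uf\|_\mathcal{N}^2 = \|Uf\|_2^2 + \lim_{r \to 1}\int_\T \bigl(\|zL_{r\lambda}Uf\|^2_\mathcal{N} - \|L_{r\lambda}Uf\|^2_\mathcal{N}\bigr)\, d\m(\lambda).$$
To translate back to $\mathcal{M}$, the intertwining extends to $UL^\phi_\lambda = L_\lambda U$ and, by an analogous evaluation at $z = \lambda$, to $U(zL^\phi_\lambda f) = zL_\lambda Uf$; since $U$ is isometric, both integrand norms equal the corresponding $\h$-norms of elements of $\mathcal{M}$. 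For the $H^2$-term, the pointwise equality $Uf = f/\phi$ on $\D \setminus \phi^{-1}(0)$ propagates to equality of the nontangential boundary limits almost everywhere on $\T$, since the Smirnov-class function $\phi$ (equivalently $\phi/z^k$) has nonzero boundary values a.e., and hence $\|Uf\|_2 = \|f/\phi\|_2$. This produces \eqref{eq221}.

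The main obstacle I anticipate lies in verifying that the hypothesis on $\{L^n\}$ on $\h$ delivers the strong convergence $(L^\phi)^n \to 0$ needed on $\mathcal{M}$. The operators $L$ and $L^\phi$ differ on $\mathcal{M}$ by the correction $f\mapsto(f(0)-\ip{f}{\phi}_\h\phi)/z$, and iterating this difference requires uniform control of the resulting $\phi$-corrections in order to conclude that $(L^\phi)^nf$ tends to zero as well. A secondary technical point is the boundary identification $Uf = f/\phi$, which again rests on $\phi$ having nonzero boundary values on a set of full measure.
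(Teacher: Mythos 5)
Your overall strategy---pass to $\tilde{\M}=\M/\phi$ via $Uf=f/\phi$, observe that this space satisfies (A.1)--(A.3) with $U L^\phi U^{-1}$ the ordinary backward shift, and then invoke \thref{modelformulaconnection} together with \thref{nowiener}---is sound and is essentially the same reduction the paper performs. However, the step you defer to the end is not a technicality but the actual content of the proof, and as it stands your argument does not close. The hypothesis is that $L^n\to 0$ strongly, where $L$ is the backward shift on $\h$; what your route requires is that $(L^\phi)^n\to 0$ strongly on $\M$. These are genuinely different operators: $\M$ need not be $L$-invariant, and on $\M$ the operator $L^\phi$ differs from $L$ by a rank-one correction involving $\phi$, and strong stability of a contraction is not preserved under such perturbations in general. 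Nothing in your unitary conjugation transfers the hypothesis; $U$ conjugates $L^\phi$ to the shift on $\mathcal{N}$, not $L$ to anything, so \thref{nowiener} cannot be applied to $\mathcal{N}$ without a separate argument.

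The paper closes exactly this gap, and the mechanism is worth noting because it uses the density theorem in an essential way. One starts from the identity $L^\phi_\lambda f = L_\lambda f - \tfrac{f(\lambda)}{\phi(\lambda)}L_\lambda\phi$ and the integral criterion for strong stability of a contraction $T$ (namely $\|T^nx\|\to 0$ iff $\lim_{r\to 1}\int_\T(1-r^2)\|T(1-r\lambda T)^{-1}x\|^2\,d\m(\lambda)=0$). For $f$ with $f/\phi\in H^\infty$ the triangle inequality and the hypothesis on $L$ force $\lim_{r\to 1}\int_\T(1-r^2)\|L^\phi_{r\lambda}f\|^2_\h\,d\m(\lambda)=0$; such $f$ are dense in $\M$ because \thref{conttheorem} applies to $\tilde{\M}=\M/\phi$; and the subadditive functional $\textbf{Q}f=\limsup_{r\to 1}\int_\T(1-r^2)\|L^\phi_{r\lambda}f\|^2_\h\,d\m(\lambda)$ is dominated by $\|f\|^2_\h$ (via the general formula \eqref{eq211}, quoted from the literature and valid without any stability hypothesis), hence continuous, hence identically zero. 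A secondary point: the paper does not re-derive the norm formula through the model for $\tilde{\M}$ as you propose, but starts from \eqref{eq211} with the factor $r^2$ and only needs to kill the $(1-r^2)$-term; your route through \thref{modelformulaconnection} would work too, but only after the stability of $L^\phi$ is established, so it does not let you avoid the argument above.
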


\begin{proof}
The fact that $L^\phi$ maps $\M$ into itself follows easily by nearly invariance of $\M$. The formula	
\begin{equation}
\|f\|^2_{\h} = \|f/\phi\|^2_2 + \lim_{r \to 1} \int_{\T} \|zL^\phi_{r\lambda} f\|^2_{\h}-r^2\|L^\phi_{r\lambda} f\|^2_{\h} d\m(\lambda) \label{eq211}
\end{equation} holds in much more general context and without the assumption on convergence of $\{L^n\}_{n=1}^\infty$ to zero. For its derivation we refer to Lemma 2.2 of \cite{alemanrichtersimplyinvariant}, and the discussion succeeding it. The equation \eqref{eq221} will follow from \eqref{eq211} if we can show that the additional assumption on $L$ implies that $$\lim_{r \to 1} \int_\T (1-r^2) \|L^\phi_{r\lambda} f\|^2_{\h} d\m(\lambda) = 0.$$ To this end, observe that for $\lambda \in \D$ we have $$L^\phi_{\lambda}f (z) = \frac{f(z) - f(\lambda)}{z-\lambda}  - \frac{f(\lambda)}{\phi(\lambda)} \frac{\phi(z)- \phi(\lambda)}{z-\lambda}  = L_\lambda f(z) - \frac{f(\lambda)}{\phi(\lambda)}L_\lambda \phi(z),$$ and so if $f/\phi \in H^\infty$, then the argument in the proof of part (iii) of \thref{modelformulaconnection} shows that \begin{gather*}
\lim_{r \to 1^-} \int_\T (1-r^2) \|L^\phi_{r\lambda} f\|^2_{\h} d\m(\lambda) \\ \lesssim \lim_{r \to 1^-} \int_\T (1-r^2) \|L_{r\lambda} f\|^2_{\h} d\m(\lambda) + \|f/\phi\|^2_\infty \int_\T (1-r^2) \|L_{r\lambda} \phi \|^2_{\h} d\m(\lambda) = 0. \end{gather*}  
Next, the Hilbert space $\tilde{\M} = \M/\phi = \{f/\phi : f \in \M\}$ with the norm $\|f/\phi\|_{\tilde{\M}} = \|f\|_{\h}.$ It is easy to see that $\tilde{\M}$ satisfies (A.1)-(A.3), and thus by \thref{conttheorem} the functions with continuous extensions to $\cD$ form a dense subset of $\tilde{\M}$. Since multiplication by $\phi$ is a unitary map from $\tilde{\M}$ to $\M$, we see that functions $f \in \M$ such that $f/\phi$ has continuous extension to $\cD$ form a dense subset of $\M$.	Finally, consider the mapping \[\textbf{Q}f = \limsup_{r \to 1^{-}} \int_\T (1-r^2) \|L^\phi_{r\lambda} f\|^2_{\h} d\m(\lambda), \quad f \in \M.\] Then $\textbf{Q}(f+g) \leq \textbf{Q}f + \textbf{Q}g$, and we have shown above that $\textbf{Q} \equiv 0$ on a dense subset of $\M$. A peek at \eqref{eq211} reveals that $\textbf{Q}f \leq \|f\|^2_{\h}$, and so $\textbf{Q}$ is continuous on $\M$. Thus $\textbf{Q} \equiv 0$ since it vanishes on a dense subset.
\end{proof}

%From the identity $\lambda L^\phi_\lambda f = \sum_{n=1}^\infty \lambda^n (L^\phi)^n f = -f + (1-\lambda L^\phi)^{-1} f$ we deduce that \begin{equation}\|\lambda L^\phi_\lambda f\|^2_{\h} = \|f\|^2_{\h} + \|(1-\lambda L^\phi)^{-1}f\|^2_{\h} - 2\Re \ip{f}{(1-\lambda L^\phi)^{-1} f)}_{\h},\label{eq231} \end{equation} and a computation shows that whenever $\phi(\lambda) \neq 0$ we have $$(1-\lambda L^\phi)^{-1}f(z) = \frac{zf(z) - \lambda \tfrac{f(\lambda)}{\phi(\lambda)}\phi(z)}{z - \lambda}$$ and $$zL^\phi_\lambda f(z) + \tfrac{f(\lambda)}{\phi(\lambda)}\phi(z) = (1-\lambda L^\phi)^{-1}f(z).$$ Now, $\phi$ is orthogonal to $zL^\phi_\lambda f$ and has norm 1, hence $$\|zL^\phi_\lambda f\|^2_{\h} + |f(\lambda)/\phi(\lambda)|^2= \|(1-\lambda L^\phi)^{-1}f\|^2_{\h}.$$ Substituting this equality into \eqref{eq231} and re-arranging we obtain $$ 2\Re \ip{f}{(1-\lambda L^\phi)^{-1} f)}_{\h} - \|f\|^2_{\h} = |f(\lambda)/\phi(\lambda)|^2 + \|zL^\phi_\lambda f\|^2_{\h} - \|\lambda L^\phi_\lambda f\|^2_{\h}.$$ Now \eqref{eq211} follows by integrating on a circle $\{|\lambda| = r\}$ and taking the limit $r \to 1$. 

We will see an application of \thref{nearinvnorm} in the sequel. For now, we show how it can be used to deduce a theorem of Hitt on the structure of nearly invariant subspaces of $H^2$ (see \cite{hittanulus}). 

\begin{cor}
If a closed subspace $\M \subset H^2$ is nearly invariant, then it is of the form $\M = \phi K_\theta$, where $K_\theta = H^2 \ominus \theta H^2$ is an $L$-invariant subspace of $H^2$, and we have the norm equality $\|f/\phi\|_2 = \|f\|_2, f \in \M$.  
\end{cor}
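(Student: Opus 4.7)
The plan is to apply \thref{nearinvnorm} to $\h = H^2$ and read off the corollary from the resulting collapse of the norm formula. We may assume without loss of generality that $k = 0$, i.e.\ the functions in $\M$ do not share a zero at the origin, by pulling out a common factor of $z^k$. Under this assumption $\phi(0) \neq 0$, and $\phi$ exists as the normalized Riesz representer of the bounded functional $f \mapsto f(0)$ on the closed subspace $\M$. The hypothesis that $L^n \to 0$ strongly on $H^2$ is a classical Hardy-space fact.

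On $H^2$ the forward shift is isometric, so $\|zg\|_{H^2} = \|g\|_{H^2}$ for every $g \in H^2$. Since $L^\phi_{r\lambda} f \in \M \subset H^2$, the integrand in \eqref{eq221} vanishes identically and the formula collapses to $\|f\|_{H^2}^2 = \|f/\phi\|_2^2$ for every $f \in \M$. This is precisely the asserted norm equality; it also shows that $V \colon \M \to L^2(\T)$ defined by $Vf = f/\phi$ is a well-defined linear isometry, so $N := V(\M)$ is a closed subspace of $L^2(\T)$.

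To complete the proof, it suffices to show that $N$ is a closed backward-shift invariant subspace of $H^2$; Beurling's theorem will then give $N = K_\theta$ for some inner $\theta$, and multiplying back by $\phi$ yields $\M = \phi K_\theta$. The backward-shift invariance of $N$ is a direct consequence of near-invariance: for $f \in \M$, the function $h := (f - (f(0)/\phi(0))\phi)/z$ lies in $\M$, since the bracketed function vanishes at $0$ and $0$ is not a common zero, and then $Vh = (f/\phi - f(0)/\phi(0))/z$ is the formal backward shift of $f/\phi$ in $N$. The analytic step $N \subset H^2$ uses the Smirnov maximum principle: the ratio $f/\phi$ is meromorphic of bounded type in $\D$ with $L^2(\T)$ boundary values, so it suffices to verify that $f/\phi$ lies in the Smirnov class $N^+$; equivalently, that the inner factor of $\phi$ divides the inner factor of every $f \in \M$. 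This divisibility can be extracted by iterating the near-invariance construction above and exploiting the extremal property of $\phi$, which maximizes $|f(0)|/\|f\|_{H^2}$ over $\M$.

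The main obstacle is the analytic inclusion $N \subset H^2$: the algebraic content follows immediately from \thref{nearinvnorm} and near-invariance, but excluding a singular inner divisor in the denominator of $f/\phi$ requires the extremal property of $\phi$ combined with Smirnov's maximum principle.
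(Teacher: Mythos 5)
Your overall route is the same as the paper's: apply \thref{nearinvnorm} with $\h = H^2$, observe that the integrand in \eqref{eq221} vanishes identically because $M_z$ is isometric on $H^2$, conclude $\|f\|_2 = \|f/\phi\|_2$, and then feed the closed, $L$-invariant space $\M/\phi$ into Beurling's theorem. Your reduction to $k=0$, the derivation of the norm equality, and the verification that near invariance makes $\M/\phi$ backward-shift invariant are all correct and match the intended argument.

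The one step you do not actually carry out is the inclusion $\M/\phi \subset H^2$, which you yourself flag as the main obstacle: you propose to show that the inner factor of $\phi$ divides that of every $f \in \M$ by ``iterating the near-invariance construction and exploiting the extremal property of $\phi$,'' but this is only asserted, not proved, so as written there is a gap. The good news is that no such argument is needed, because the inclusion is already built into the formula \eqref{eq211} you are invoking. In its derivation (Lemma 2.2 of \cite{alemanrichtersimplyinvariant}) one iterates the identity $f = \ip{f}{\phi}_{H^2}\phi + zL^\phi f$ to obtain $f = \phi \sum_{j=0}^{n-1} \ip{(L^\phi)^j f}{\phi}_{H^2}\, z^j + z^n (L^\phi)^n f$; since point evaluations are bounded and $\|(L^\phi)^n f\| \leq \|f\|$, the remainder tends to $0$ pointwise in $\D$, so $f = \phi g$ with $g(z) = \sum_{j \geq 0} \ip{(L^\phi)^j f}{\phi}_{H^2}\, z^j$ analytic in all of $\D$. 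The quantity $\|f/\phi\|_2^2$ appearing in \eqref{eq211} is precisely $\sum_{j}|\ip{(L^\phi)^j f}{\phi}_{H^2}|^2$, i.e.\ the $H^2$-norm of this function $g$, and its finiteness already gives $f/\phi = g \in H^2$: the division by $\phi$, including by its interior zeros and its inner factor, happens automatically. With that observation your ``main obstacle'' dissolves, no appeal to the Smirnov maximum principle or to extremality is required, and your proof closes and coincides with the paper's.
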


\begin{proof}
If $\M$ is nearly invariant then the formula \eqref{eq221} gives $\|f/\phi\|_2 = \|f\|$, since $M_z$ is an isometry on $H^2$. Thus $\M/\phi := \{f/\phi : f \in \M\}$ is closed in $H^2$, and it is easy to see that it is $L$-invariant. Thus $\M/\phi = K_\theta$ by Beurling's famous characterization.
\end{proof}

\subsection{Orthocomplements of shift invariant subspaces of the Bergman space.}
The Bergman space $L^2_a(\D)$ consists of functions $g(z) = \sum_{k=0} g_kz^k$ analytic in $\D$ which satisfy $$\|g\|^2_{L^2_a(\D)} = \int_\D |g(z)|^2 dA(z) = \sum_{k=0}^\infty (k+1)^{-1}|g_k|^2 < \infty,$$ where $dA$ denotes the normalized area measure on $\D$. The Bergman space is invariant for $M_z$ and the lattice of $M_z$-invariant subspaces of $L^2_a(\D)$ is well-known to be very complicated (see for example Chapter 8 and 9 of \cite{durenbergmanspaces} and Chapters 6, 7 and 8 of \cite{hedenmalmbergmanspaces}). Before stating and proving our next result, we will motivate it by showing that the orthogonal complements of $M_z$-invariant subspaces of $L^2_a(\D)$ can consist entirely of ill-behaved functions. The following result is essentially due to A. Borichev \cite{borichevpriv}.

\begin{prop} \thlabel{badorthocomplements}
	There exists a subspace $\M \subsetneq L^2_a(\D)$ which is invariant for $M_z$, with the property that for any non-zero function $g \in \M^\perp$ and any $\delta > 0$ we have $$\int_{\D} |g|^{2+\delta} dA = \infty.$$
\end{prop}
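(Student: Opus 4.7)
The plan is to construct a single function $f_0\in L^2_a(\D)$ whose boundary singularities prevent membership in any $L^{2+\delta}(\D)$ with $\delta>0$, and then take $\M$ to be the orthogonal complement in $L^2_a(\D)$ of the closed $M_z^*$-invariant subspace $\mathcal{N}$ generated by $f_0$. Then $\M\subsetneq L^2_a(\D)$ is automatically $M_z$-invariant and $\M^\perp=\mathcal{N}$, so the whole problem reduces to verifying that the pathology of $f_0$ is inherited by every nonzero element of $\mathcal{N}$.

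For the construction I would fix distinct boundary points $\{\zeta_n\}\subset\T$, exponents $\alpha_n\in(0,1)$ with $\alpha_n\uparrow 1$, and positive weights $c_n$ such that
\begin{equation*}
f_0(z):=\sum_{n\ge 1}c_n(1-\conj{\zeta_n}z)^{-\alpha_n}
\end{equation*}
converges in $L^2_a(\D)$. Each summand lies in $L^2_a(\D)$ because $\alpha_n<1$, so choosing $c_n$ to beat the growth of $\|(1-\conj{\zeta_n}z)^{-\alpha_n}\|_{L^2_a}$ ensures convergence. A direct computation gives $(1-\conj{\zeta_n}z)^{-\alpha_n}\in L^p(\D)$ if and only if $\alpha_np<2$, and since the singularities of the summands sit at distinct boundary points they cannot cancel; hence for every $\delta>0$ any summand with $\alpha_n(2+\delta)>2$ witnesses $f_0\notin L^{2+\delta}(\D)$.

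To propagate this pathology to all of $\mathcal{N}$, I would introduce, for each $n$, a linear functional $\Lambda_n$ on $L^2_a(\D)$ that detects a boundary singularity of order $\alpha_n$ at $\zeta_n$---for instance integration against a smooth weight comparable to $(1-|z|)^{\alpha_n-1}$ in a Stolz cone at $\zeta_n$, or a weighted nontangential-limit evaluation. Such $\Lambda_n$ is engineered to be (i) continuous on $L^2_a(\D)$, (ii) unbounded on $L^{2+\delta}(\D)\cap L^2_a(\D)$ for $\delta<2/\alpha_n-2$, and (iii) to commute with every polynomial in $M_z^*$ up to explicit bounded factors, so that $\Lambda_n\bigl(p_k(M_z^*)f_0\bigr)$ remains a definite nonzero multiple of $c_n$ uniformly in $k$. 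Passing to an $L^2_a$-limit $g=\lim_k p_k(M_z^*)f_0\in\mathcal{N}$, condition (i) transfers $\Lambda_n(g)\ne 0$ for at least one $n$ whenever $g\not\equiv 0$; condition (ii) then excludes $g\in L^{2+\delta}(\D)$ for any $\delta<2/\alpha_n-2$. Since $\alpha_n\to 1$, this rules out $g\in L^{2+\delta}(\D)$ for every $\delta>0$.

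The principal difficulty is exactly the engineering of these localizing functionals: the three demands of $L^2_a$-boundedness, controlled failure on $L^{2+\delta}$, and compatibility with the weighted backward shift $M_z^*$ must all hold simultaneously, and $L^2_a$-norm limits do not a priori transmit boundary behavior. An alternative route that avoids this issue is to work entirely at the Taylor-coefficient level, using that $M_z^*$ acts as a weighted backward shift with weights $n/(n+1)$ on $\{z^n\}$ and analysing, by Tauberian means, the coefficient asymptotics of $(1-\conj{\zeta_n}z)^{-\alpha_n}$; the pathology then reduces to stability of these asymptotics under the specific weighted shift.
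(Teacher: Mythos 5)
There are two gaps here, and the first one is fatal to the whole mechanism. Your plan hinges on continuous linear functionals $\Lambda_n$ on $L^2_a(\D)$ such that $\Lambda_n(g)\neq 0$ forces $g\notin L^{2+\delta}(\D)$. No such functional can exist: the polynomials are dense in $L^2_a(\D)$ and lie in $L^p(\D)$ for every $p$, so any continuous functional on $L^2_a(\D)$ that annihilates $L^{2+\delta}(\D)\cap L^2_a(\D)$ is identically zero, and conversely any nonzero one is already nonzero at some polynomial. Your condition (ii) is also impossible as literally stated: since $\D$ has finite area, $\|g\|_{L^2}\lesssim\|g\|_{L^{2+\delta}}$, so a functional bounded on $L^2_a(\D)$ is automatically bounded on $L^{2+\delta}\cap L^2_a$ in the $L^{2+\delta}$-norm. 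In short, membership in $L^{2+\delta}$ is not detectable by any single $L^2_a$-continuous functional, so no engineering of $\Lambda_n$ can close the argument. The second gap is the choice of $\mathcal{N}=[f_0]_{M_z^*}$: you give no reason why this backward-shift invariant subspace should be proper, let alone consist entirely of badly integrable functions. Backward-shift cyclic vectors are generic, and if $f_0$ is cyclic for $M_z^*$ then $\M=\mathcal{N}^\perp=\{0\}$ and $\M^\perp=L^2_a(\D)$ contains the constants, so the conclusion fails outright. Even if $\mathcal{N}$ were proper, nothing prevents it from containing, say, a bounded function.

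The paper's proof runs in the opposite direction and sidesteps both issues. It starts from a theorem of Borichev and Hedenmalm: there is $f$ with $f,1/f\in L^2_a(\D)$ whose generated \emph{forward} shift invariant subspace $\M=[f]_{M_z}$ is proper. Given $g\in\M^\perp$ with $\int_\D|g|^{2+\delta}\,dA<\infty$, one has $\int_\D pf\conj{g}\,dA=0$ for all polynomials $p$, and H\"older gives $f\conj{g}\in L^r$ for some $r>1$. Since $1/f\in L^2_a$, the power $f^{-1/M}$ lies in $L^s_a$ ($s$ the conjugate exponent) and is approximable there by polynomials; substituting these approximants for $p$ replaces $f$ by $f^{1-1/M}$ in the orthogonality relation, and after $M$ iterations one concludes $\int_\D p\conj{g}\,dA=0$ for all $p$, hence $g=0$. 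The key point is that the extra integrability of $g$ is consumed by a duality/bootstrapping argument against the invertibility of $f$, rather than being ``detected'' pointwise; that is the idea your proposal is missing.
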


\begin{proof} The argument is based on the existence of a function $f$ such that $f, 1/f \in L^2_a(\D)$, yet if $\M$ is the smallest $M_z$-invariant subspace containing $f$, then $\M \neq L^2_a(\D)$. Such a function exists by \cite{borichevhedenmalmcyclicity}. Take $g \in \M^\perp$ and assume that $\int_\D |g|^{2+\delta} dA < \infty$ for some $\delta > 0$. Since $\M$ is $M_z$-invariant, for any polynomial $p$ we have that \begin{equation} \int_\D pf\conj{g} dA = 0.\label{eq37} \end{equation} By the assumption on $g$ and H\"older's inequality we have that $f\conj{g} \in L^r(\D)$ for $r > 1$ sufficiently close to 1. Let $s = \frac{r}{r-1}$ be the H\"older conjugate index of $r$. Since $1/f \in L^2_a(\D)$, the analytic function $f^{-\epsilon}$ is in the Bergman space $L^s_a(\D)$ for sufficiently small $\epsilon > 0$, and hence can be approximated in the norm of $L^s_a(\D)$ by a sequence $\{p_n\}_{n=1}^\infty$ of polynomials. If $p$ is any polynomial, then \begin{gather*}\lim_{n\to \infty} \int_\D |(p_n-f^{-\epsilon})pfg| dA \\ \lesssim \lim_{n\to \infty} \|p_n-f^{-\epsilon}\|_{L^s(\D)}\|pfg\|_{L^r(\D)} = 0.\end{gather*} By choosing $\epsilon = 1/\M$ for sufficiently large positive integer $\M$, we see from the above that for any polynomial $p$ we have $$\int_\D pf^{1-1/\M}\conj{g}dA = 0,$$ which is precisely \eqref{eq37} with $\conj{g}$ replaced by $f^{-1/\M}\conj{g}$, and of course we still have $ff^{-1/\M}\conj{g} \in L^r(\D)$ for the same choice of $r > 1$. Repeating the argument gives $$\int_\D pf^{1-2/\M}\conj{g} dA = 0,$$ and after $\M$ repetitions of the argument we arrive at $$\int_\D p\conj{g} dA(z) = 0$$ for any polynomial $p$. Then $g = 0$ by density of polynomials in $L^2_a(\D)$. 
\end{proof}
Despite the rather dramatic situation described by \thref{badorthocomplements}, an application of \thref{conttheoremcor} yields the following result. 

\begin{cor} \thlabel{contderivdense}
	Let $\M$ be a subspace of $L^2_a(\D)$ which is invariant for $M_z$. Then the functions in the orthocomplement $\M^\perp$ which are derivatives of functions in the disk algebra $\A$ are dense in $\M^\perp$. 
\end{cor}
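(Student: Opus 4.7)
The plan is to transfer the problem from $L^2_a(\D)$ to a Hilbert space of antiderivatives that fits the framework of this paper, and then invoke \thref{conttheoremcor}. Concretely, set
\[
\h:=\{G\in\mathrm{Hol}(\D):G'\in L^2_a(\D)\},\qquad \|G\|_\h^2:=|G(0)|^2+\|G'\|_{L^2_a}^2.
\]
Writing $G(z)=\sum_{k\ge 0}G_kz^k$, one has $\|G\|_\h^2=|G_0|^2+\sum_{k\ge 1}k|G_k|^2$, i.e.\ $\h$ is a weighted Hardy space with nondecreasing weight sequence $(1,1,2,3,\ldots)$. A one-line coefficient computation yields $\|G\|_\h^2-\|LG\|_\h^2=|G_0|^2+\sum_{k\ge 2}|G_k|^2$, so $L$ is contractive on $\h$. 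Boundedness of evaluation and the reproducing property of the constant $1$ are immediate, so $\h$ satisfies (A.1)--(A.3).

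To the given $M_z$-invariant subspace $\M\subset L^2_a(\D)$ I associate
\[
\mathcal{N}:=\{F\in\h:F'\in\M^\perp\}\subset\h.
\]
Since $F\mapsto F'$ is a bounded linear surjection from $\h$ onto $L^2_a(\D)$ and $\M^\perp$ is closed, $\mathcal{N}$ is a closed subspace of $\h$. The key structural fact is that $\mathcal{N}$ is $L$-invariant. This rests on the identity
\[
\langle(LF)',h\rangle_{L^2_a}=\langle F',M_zh\rangle_{L^2_a},\qquad F\in\h,\ h\in L^2_a(\D),
\]
which I would verify by comparing Taylor coefficients: both sides equal $\sum_{j\ge 0}F_{j+2}\overline{h_j}$. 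Consequently, for $F\in\mathcal{N}$ and $h\in\M$ one has $M_zh\in\M$ and $F'\in\M^\perp$, hence $\langle(LF)',h\rangle_{L^2_a}=0$, so $(LF)'\in\M^\perp$ and $LF\in\mathcal{N}$.

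Applying \thref{conttheoremcor} to $\mathcal{N}\subset\h$ gives density of $\mathcal{N}\cap\A$ in $\mathcal{N}$. Given $g\in\M^\perp$, its primitive $G(z):=\int_0^zg(w)\,dw$ lies in $\mathcal{N}$, so it can be approximated in $\h$-norm by a sequence $F_n\in\mathcal{N}\cap\A$. Since $\|F_n-G\|_\h^2\ge\|F_n'-g\|_{L^2_a}^2$, the derivatives $F_n'$ converge to $g$ in $L^2_a(\D)$; by construction $F_n\in\A$ and $F_n'\in\M^\perp$, so each $F_n'$ is a function in $\M^\perp$ which is the derivative of an element of the disk algebra. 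This is exactly the required approximation.

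The main thing to get right is the initial choice of $\h$: the weight sequence must be nondecreasing (so that $L$ is contractive), while the $0$th weight must equal $1$ (so that the constant $1$ has the right reproducing property), and differentiation must land precisely in $L^2_a(\D)$. The weights $(1,1,2,3,\ldots)$ are tailored to all three constraints simultaneously. Once $\h$ and $\mathcal{N}$ are fixed, the remaining reasoning is formal, and the substantive work is carried entirely by \thref{conttheoremcor}.
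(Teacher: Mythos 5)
Your proof is correct and follows essentially the same route as the paper: transfer to a space of primitives satisfying (A.1)--(A.3) on which $M_z^*|_{L^2_a(\D)}$ corresponds to $L$, then invoke \thref{conttheoremcor}. The paper does this via the unitary $Uf(z)=\frac{1}{z}\int_0^z f(w)\,dw$ onto the classical Dirichlet space (weights $k+1$) with $UM_z^*U^*=L$, whereas you use the non-injective differentiation map out of the weighted space with weights $\max(k,1)$ and work with preimages; the difference is cosmetic.
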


\begin{proof} The operator $U$ given by $$Uf(z) = \frac{1}{z}\int_0^z f(w) \,dw$$ is a unitary map between $L^2_a(\D)$ and the classical Dirichlet space $\mathcal{D}$, and \thref{conttheoremcor} applies to the latter space. A computation involving the Taylor series coefficients shows that $UM_z^*U^* = L$, and so if $\M \subset L^2_a(\D)$ is $M_z$-invariant, then $U\M^\perp \subset \mathcal{D}$ is $L$-invariant. Thus from \thref{conttheoremcor} we infer that the set of functions $f \in \M^\perp$ for which $Uf(z)$ is in the disk algebra $\A$ is dense in $\M^\perp$, and for any function $f$ in this set we have $f(z) = (zUf(z))'$, so $f$ is the derivative of a function in $\A$. 
\end{proof}

\section{Finite rank $\h[\textbf{B}]$-spaces}

\subsection{Finite rank spaces.} 
The applicability of \thref{modeltheorem} depends highly on the ability to identify the spaces $W$ and $\Theta H^2(Y_1)$ in \eqref{decomp}. In general, $\Delta(\z)$ defined by \eqref{delta} is taking values in the algebra of operators on an infinite dimensional Hilbert space. The situation is much more tractable in the case of finite rank $\h[\textbf{B}]$-spaces, for then $\Delta(\z)$ acts on a finite dimensional space. In this last section we restrict ourselves to the study of the finite rank case. Thus, we study spaces of the form $\h = \h[\textbf{B}]$ with $\textbf{B} = (b_1, \ldots, b_n)$, where $n$ is the rank of $\h[\textbf{B}]$ was defined in the introduction. It follows that $b_1, \ldots, b_n$ are linearly independent. For convenience, we will also be assuming that $\textbf{B} \neq 0,1$, which correspond to the cases $\h[\textbf{B}] = H^2$ and $\h(\textbf{B} = \{0\}$, respectively. 

%The following characterization can be easily deduced from \thref{kernel}, \thref{modeltheorem} and the corresponding proofs.
%
%\begin{prop} \thlabel{finiterankeqcond} Let $\h$ be a Hilbert space of analytic functions satisfying properties (A.1)-(A.3). The following are equivalent;	
%	\begin{enumerate}[(i)]
%		\item the operator $1_{\h} - LL^*$ has finite rank,
%		\item there exist linearly independent analytic functions $b_1, \ldots, b_n$ such that \[k_\h(z,\lambda) = \frac{1 - \sum_{i=1}^n \conj{b_i(\lambda)}b_i(z)}{1-\conj{\lambda}z},\]
%		\item there exist linearly independent analytic functions $b_1, \ldots, b_n$ such that $\h = \h[\textbf{B}]$ for some $\textbf{B} = (b_1, \ldots, b_n)$.
%	\end{enumerate}
%\end{prop}

\subsection{$M_z$-invariance and consequences.} It turns out that the decomposition \eqref{decomp} is particularly simple in the case when $\h[\textbf{B}]$ is invariant for $M_z$.

\begin{prop} \thlabel{noWlemma}
	If the finite rank $\h[\textbf{B}]$-space is invariant for $M_z$, then $W = \{0\}$ in the decomposition \eqref{decomp}, and thus \thref{modeltheoremanalytic} applies, with $\textbf{A}$ a square matrix and $\det \textbf{A} \neq 0$. 
\end{prop}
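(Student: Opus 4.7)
The plan is to invoke \thref{nowiener}, which characterizes the condition $W = \{0\}$ as the strong stability of $L$: $\|L^n \mathbf{f}\|_{\h[\textbf{B}]} \to 0$ for every $\mathbf{f} \in \h[\textbf{B}]$. Because $1 \in \h$ by (A.3) and $\h$ is $M_z$-invariant, every polynomial belongs to $\h[\textbf{B}]$, and since $Lz^k = z^{k-1}$ we have $L^N p = 0$ for any polynomial $p$ of degree less than $N$. Hence strong stability is trivial on the linear span of polynomials, and since $\|L\|\le 1$ the set of $\mathbf{f}$ for which $L^n\mathbf{f}\to 0$ is norm-closed. It thus remains to show that the polynomials are dense in $\h[\textbf{B}]$.

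To obtain polynomial density I would use \thref{conttheorem}, which reduces the task to polynomial approximation of each $u \in \A \cap \h[\textbf{B}]$ in the $\h[\textbf{B}]$-norm. The Fejér means $\sigma_N u$ are polynomials lying in $\h[\textbf{B}]$, satisfy $\|\sigma_N u\|_\infty \le \|u\|_\infty$, and converge uniformly to $u$ on $\cD$. The main obstacle is to promote uniform convergence to convergence in the $\h[\textbf{B}]$-norm. Via the isometric model $J$ of \thref{modeltheorem} one writes $\|u - \sigma_N u\|^2_{\h[\textbf{B}]} = \|u - \sigma_N u\|^2_{H^2(X)} + \|\mathbf{g}_u - \mathbf{g}_{\sigma_N u}\|^2_{L^2(Y)}$; the first term tends to zero because of uniform convergence on $\T$, and the second is controlled by combining the $M_z$-boundedness on $\h[\textbf{B}]$ (yielding uniform $\h[\textbf{B}]$-bounds on $\sigma_N u$) with the uniform sup bound and the characterization of $\mathbf{g}_{\sigma_N u}$ in part~(i) of \thref{modeltheorem}.

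For the concluding assertion, once $W = \{0\}$ has been established, \thref{modeltheoremanalytic} supplies an outer function $\textbf{A}:\D\to\mathcal{B}(Y,Y_1)$ with $\textbf{B}(\z)^*\textbf{B}(\z)+\textbf{A}(\z)^*\textbf{A}(\z)=I_Y$ almost everywhere on $\T$. In the finite rank case $Y=\mathbb{C}^n$, and since $\textbf{B}(\z)^*\textbf{B}(\z)$ has rank at most one, $\textbf{A}(\z)^*\textbf{A}(\z)$ has rank $n$ on the set $\{\z\in\T:\|\textbf{B}(\z)\|_2<1\}$. This set must have positive measure under $M_z$-invariance---otherwise $\Delta(\z)$ would have a non-trivial kernel almost everywhere, which would force $\textbf{A}$ to have generic rank strictly less than $n$ and contradict the density of $\textbf{A} H^2(Y)$ in $H^2(Y_1)$ that defines $Y_1$. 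Consequently $Y_1$ is identified with $\mathbb{C}^n$, $\textbf{A}$ is a square $n\times n$ matrix-valued outer function, and $\det\textbf{A}$ is a nonzero outer scalar function.
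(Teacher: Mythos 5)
Your reduction of $W=\{0\}$ to strong stability of $L$ via \thref{nowiener}, and then to density of polynomials (using $L^Np=0$ for $\deg p<N$ and the norm-closedness of $\{\textbf{f}:\|L^n\textbf{f}\|\to 0\}$), is a legitimate strategy, but the step you leave to Fej\'er means is exactly where the difficulty lives, and your sketch does not close it. In the paper, polynomial density (\thref{polydense}) is a \emph{consequence} of \thref{noWlemma}: its proof uses the analytic model, the invertibility of $\textbf{A}(\lambda)$ and \eqref{eq51}, all of which presuppose $W=\{0\}$, so it cannot be invoked here. Your independent argument fails at the claim that $M_z$-boundedness yields uniform $\h$-bounds on $\sigma_N u$: writing $\sigma_N u=\frac{1}{N+1}\sum_{j=0}^N S_ju$ with $S_ju=u-z^{j+1}L^{j+1}u$ gives only $\|S_ju\|_\h\le(1+\|M_z\|^{j+1})\|u\|_\h$, which blows up since $M_z$ is expansive; the Fej\'er convolution bound $\|\sigma_Nu\|_\h\le\|u\|_\h$ requires rotation invariance, which $\h[\textbf{B}]$ lacks. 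Even granting a uniform bound, you would get only weak convergence of a subsequence, not the norm convergence $\textbf{g}_{\sigma_Nu}\to\textbf{g}_u$ in $L^2(Y)$ that your isometric decomposition requires.

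The concluding paragraph is also flawed. If $\|\textbf{B}(\z)\|_2=1$ a.e.\ then $\Delta(\z)$ has rank $n-1$ a.e.\ and indeed $m<n$; but this does \emph{not} contradict the density of $\textbf{A}H^2(\mathbb{C}^n)$ in $H^2(\mathbb{C}^m)$, since outer functions between spaces of different dimensions exist. The contradiction must come from $M_z$-invariance, and the paper extracts it differently: tracking how $J$ transforms under $M_z$ shows that for each $f$ there is $\textbf{c}_f\in\mathbb{C}^m$ with $\z(\textbf{B}^*f+\Delta\textbf{g})+\textbf{A}^*\textbf{c}_f\in\conj{H^2_0(\mathbb{C}^n)}$; applying this to the kernels $k_\lambda$ and reading off the constant term yields $\textbf{B}(\lambda)^*/\lambda=\textbf{A}(0)^*\textbf{c}_\lambda$, and the linear independence of $b_1,\dots,b_n$ forces $\textbf{A}(0)^*:\mathbb{C}^m\to\mathbb{C}^n$ to be onto, hence $m=n$, which gives both $W=\{0\}$ and $\det\textbf{A}\neq 0$ at once. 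You need either this computation or a genuinely self-contained proof of polynomial density; as written, both halves of your argument have gaps.
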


\begin{proof}
In the notation of \thref{modeltheorem} we have $Y \simeq \mathbb{C}^n$, thus \eqref{decomp} becomes \begin{equation} 
\conj{\Delta H^2(\mathbb{C}^n)} = W \oplus \Theta H^2(\mathbb{C}^m), \label{eq53}
\end{equation} where $\quad m \leq n$. We will see that  $W = \{0\}$ by showing something stronger, namely that $m = n$ in \eqref{eq53}. To this end, let $f \in \h[\textbf{B}]$ and consider $Jf = (f,\textbf{g})$ and $JM_zf = (M_zf, \textbf{g}_0)$, $J$ being the embedding given by \thref{modeltheorem}. Let $\textbf{g} = \textbf{w} + \Theta \textbf{h}$ and $\textbf{g}_0 = \textbf{w}_0 + \Theta \textbf{h}_0$ be the decompositions with respect to \eqref{eq53}. Since $LM_zf = f$, we see from part (ii) of \thref{modeltheorem} that $\textbf{w}_0(\z) = \z\textbf{w}(\z)$ and $\textbf{h}_0(\z) = \z\textbf{h}(\z) + \textbf{c}_f$, where $\textbf{c}_f \in \mathbb{C}^m$. By part (i) of \thref{modeltheorem} we have that \begin{gather} \textbf{B}(\zeta)^*\z\textbf{f}(\z) + \Delta(\z) \textbf{g}_0(\z) \nonumber \\ = \z\Big(\textbf{B}(\z)^*\textbf{f}(\z) + \Delta(\z)\textbf{g}(\z)\Big) + \textbf{A}(\z)^*\textbf{c}_f  \in \conj{H^2_0(\mathbb{C}^n)} \label{eq41}, \end{gather} where, as before, $\textbf{A} = \Theta^*\Delta$. We apply this to the reproducing kernel $k_\lambda(z)$ of $\h[\textbf{B}]$. Recall from Section \ref{prelimsec} that \begin{equation*}
Jk_\lambda = \Bigg( \frac{1 - \textbf{B}(z)\textbf{B}(\lambda)^*}{1-\conj{\lambda}z}, \frac{-\Delta(\z)\textbf{B}(\lambda)^*}{1-\conj{\lambda}z}\Bigg) = (k_\lambda, \textbf{g}_\lambda).
\end{equation*} A brief computations shows that $$\textbf{B}^*(\z)k_\lambda(\z) + \Delta(\z)\textbf{g}_\lambda(\z) = \conj{\z}\frac{\textbf{B}(\z)^* - \textbf{B}(\lambda)^*}{\conj{\z}-\lambda}.$$ Using \eqref{eq41} we deduce that for each $\lambda \in \D$ there exists $\textbf{c}_\lambda \in \mathbb{C}^m$ such that $$\frac{\textbf{B}(\z)^* - \textbf{B}(\lambda)^*}{\conj{\z}-\lambda} + \textbf{A}(\z)^*\textbf{c}_\lambda \in \conj{H^2_0(\mathbb{C}^n)}.$$ Since $\textbf{B}(0) = 0$ we see that the constant term of the above function equals $$0 = -\textbf{B}(\lambda)^*/\lambda + \textbf{A}(0)^*\textbf{c}_\lambda.$$ By linear independence of the coordinates $\{b_i\}_{i=1}^n$ we conclude that $\textbf{A}(0)^*$ maps an $m$-dimensional vector space onto an $n$-dimensional vector space. Thus it follows that $m = n$, and hence $W = \{0\}$ in \eqref{eq53}. Since $\textbf{A}$ is outer, invertibility of $\textbf{A}(0)$ implies that $\det \textbf{A} \neq 0$, and thus the function $\det \textbf{A}$ is non-zero and outer. 
\end{proof}

The following result characterizes $M_z$-invariance in terms of modulus of $\textbf{B}$, and is a generalization of a well-known theorem for $\h(b)$-spaces.

\begin{thm} \thlabel{shiftinvcriterion}
A finite rank $\h[\textbf{B}]$-space is invariant under the forward shift operator $M_z$ if and only if $$\int_\T \log(1-\|\textbf{B}\|^2_2) d\m = \int_\T \log(1-\sum_{i=1}^n |b_i|^2) d\m > -\infty.$$
\end{thm}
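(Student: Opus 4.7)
The plan is to combine \thref{noWlemma} with the classical matrix outer factorization theorem, centered on the algebraic identity
\[ \det \Delta(\zeta)^2 \;=\; \det(I_{\mathbb{C}^n} - \textbf{B}(\zeta)^*\textbf{B}(\zeta)) \;=\; 1-\|\textbf{B}(\zeta)\|_2^2 \]
valid a.e.\ on $\T$. This holds because $\textbf{B}^*\textbf{B}$ is the rank-one $n \times n$ matrix whose only non-zero eigenvalue is $\|\textbf{B}\|_2^2$, so $I-\textbf{B}^*\textbf{B}$ has eigenvalues $1,\dots,1,1-\|\textbf{B}\|_2^2$. Hence the stated condition is equivalent to $\log \det \Delta^2 \in L^1(\T)$.

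For the forward implication, assume $\h[\textbf{B}]$ is $M_z$-invariant. By \thref{noWlemma}, $W=\{0\}$ in \eqref{decomp}, \thref{modeltheoremanalytic} is in force, and $\textbf{A}$ is a square outer matrix function with $\det \textbf{A} \not\equiv 0$; moreover $\Theta$ is pointwise unitary on $\mathbb{C}^n$ a.e.\ on $\T$. Hence $\textbf{A}^*\textbf{A} = \Delta\Theta\Theta^*\Delta = \Delta^2$ a.e., and $\textbf{A} \in H^\infty(\mathcal{B}(\mathbb{C}^n))$ since both $\Delta$ and $\Theta$ are bounded by $1$. Using the classical fact that the determinant of a square outer matrix function in $H^\infty$ (not identically zero) is a non-zero scalar outer function, we conclude that $\det \textbf{A}$ is outer in $H^\infty$, so $\log|\det\textbf{A}| \in L^1(\T)$. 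Combined with $|\det\textbf{A}|^2 = \det \Delta^2 = 1-\|\textbf{B}\|_2^2$, this yields $\int_\T \log(1-\|\textbf{B}\|_2^2) d\m > -\infty$.

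Conversely, assume the integral condition. The matrix-valued outer factorization theorem of Helson-Lowdenslager and Wiener-Masani produces an outer function $\textbf{A} \in H^\infty(\mathcal{B}(\mathbb{C}^n))$ with $\textbf{A}^*\textbf{A} = \Delta^2$ a.e.\ on $\T$ and $\det\textbf{A}$ scalar outer; in particular $\textbf{A}(0)$ is invertible. Setting $\Theta := \Delta \textbf{A}^{-1}$ a.e.\ on $\T$ produces a unitary-valued function satisfying $\Theta \textbf{A} = \Delta$. Since $\textbf{A} H^2(\mathbb{C}^n)$ is dense in $H^2(\mathbb{C}^n)$ (outerness), we obtain $\text{clos}(\Delta H^2(\mathbb{C}^n)) = \Theta H^2(\mathbb{C}^n)$, so $W=\{0\}$ in \eqref{decomp} and \thref{modeltheoremanalytic} applies.

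To deduce $M_z$-invariance, take $f \in \h[\textbf{B}]$ with $Jf=(f,\textbf{f}_1)$ and let $\psi := \textbf{B}^*f + \textbf{A}^*\textbf{f}_1 \in \overline{H^2_0(\mathbb{C}^n)}$, with $\psi_{-1}$ denoting its Fourier coefficient at $\zeta^{-1}$. Define $\tilde{\textbf{f}}_1 := z\textbf{f}_1 + \textbf{c}$ where $\textbf{c} := -(\textbf{A}(0)^*)^{-1}\psi_{-1}$. On $\T$ one has $\textbf{B}^*(zf) + \textbf{A}^*\tilde{\textbf{f}}_1 = \zeta\psi + \textbf{A}^*\textbf{c}$; both summands are co-analytic (the first because $\zeta\overline{H^2_0} \subset \overline{H^2}$, the second because $\textbf{A}^*$ is), and the constant term equals $\psi_{-1} + \textbf{A}(0)^*\textbf{c} = 0$. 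Hence $\zeta\psi + \textbf{A}^*\textbf{c} \in \overline{H^2_0(\mathbb{C}^n)}$, so by \thref{modeltheoremanalytic}(i), $zf \in \h[\textbf{B}]$, proving the invariance. The principal technical obstacle is the matrix outer factorization theorem and the attendant invertibility of $\textbf{A}(0)$; once these are in hand, the explicit ansatz for $\tilde{\textbf{f}}_1$ closes the argument via a one-line Fourier-coefficient check.
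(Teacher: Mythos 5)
Your argument is correct, and the forward direction together with the final ``adjust the second component by a constant vector'' step coincide with what the paper does (including the observation that $\det(I-\textbf{B}^*\textbf{B})=1-\|\textbf{B}\|_2^2$ because $\textbf{B}^*\textbf{B}$ has rank one; for the forward implication the paper even gets away with less than outerness of $\det\textbf{A}$, using only that a non-zero bounded analytic function has integrable log-modulus). Where you genuinely diverge is in the middle of the converse. The paper first proves $W=\{0\}$ in \eqref{decomp} by an elementary contradiction: it takes a non-zero $\textbf{g}\in W$ vanishing on a set of positive measure, multiplies approximants $\Delta\textbf{h}_k$ by $\mathrm{adj}(\Delta)$ to land in $\psi d\,H^2(\mathbb{C}^n)$ for a scalar outer $d$ with $|d|=(1-\sum|b_i|^2)^{1/2}$, and contradicts the fact that a non-zero function in $\psi H^2(\mathbb{C}^n)$ cannot vanish on a set of positive measure; the square outer $\textbf{A}$ then comes for free from the canonical decomposition. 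You instead invoke the Wiener--Masani/Helson--Lowdenslager matrix outer factorization of $\Delta^2$ directly, set $\Theta=\Delta\textbf{A}^{-1}$, and read off $\text{clos}(\Delta H^2(\mathbb{C}^n))=\Theta H^2(\mathbb{C}^n)$ from outerness of $\textbf{A}$. Your route is shorter and makes the role of the hypothesis $\log\det\Delta^2\in L^1(\T)$ completely transparent (it is exactly the hypothesis of the factorization theorem), at the cost of importing that theorem as a black box; the paper's route is more self-contained, needing only the existence of a scalar outer function with prescribed modulus plus the adjugate trick. One small point worth making explicit in your version: having produced a unitary-valued $\Theta$ with $\text{clos}(\Delta H^2(\mathbb{C}^n))=\Theta H^2(\mathbb{C}^n)$, you should note that $M_\z$ acts on this space as a pure shift, so the unitary summand $W$ of the canonical decomposition \eqref{decomp} is indeed trivial and \thref{modeltheoremanalytic} applies; this is immediate but is the hinge on which the appeal to the model turns. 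Your sign for the correcting constant, $\textbf{c}=-(\textbf{A}(0)^*)^{-1}\psi_{-1}$, is the right one.
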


\begin{proof} Assume that $\h[\textbf{B}]$ is $M_z$-invariant. Then $\det \textbf{A}(z)$ is non-zero by \thref{noWlemma}. In terms of boundary values on $\T$ we have $\textbf{A} = \Theta^* \Delta = \Theta^*(I_n - \textbf{B}^*\textbf{B})^{1/2}$, where $I_n$ is the $n$-by-$n$ identity matrix, and so $\det \textbf{A} = \conj{\det \Theta}(1-\sum_{i=1}^n |b_i|^2)^{1/2}$ on $\T$. Since $\Theta$ is an isometry we have that $|\det \Theta| = 1$, and thus $$\int_\T \log(1-\sum_{i=1}^n |b_i|^2) d\m = \int_\T \log(|\det \textbf{A}|) d\m> -\infty,$$ last inequality being a well-known fact for bounded analytic functions.
	
Conversely, assume that $\int_\T \log(1-\sum_{i=1}^n |b_i|^2) d\m > -\infty$. Thus $1-\sum_{i=1}^n |b_i|^2 = \det \Delta  > 0$ almost everywhere on $\T$. Consider again the decomposition in \eqref{eq53}. We claim that $W = \{0\}$. Assume, seeking a contradiction, that $W \neq \{0\}$. $W$ is invariant under multiplication by scalar-valued bounded measurable functions, and so $W$ contains a function $\textbf{g}$ which is non-zero but vanishes on a set of positive measure. Fix $\textbf{h}_n \in H^2(\mathbb{C}^n)$ such that $\Delta \textbf{h}_n \to \textbf{g}$. Let $\text{adj} (\Delta) := (\det \Delta)\Delta^{-1}$ be the adjugate matrix. Then $\text{adj} (\Delta)$ has bounded entries and thus \begin{equation}\det \Delta \textbf{h}_n = \text{adj} (\Delta) \Delta \textbf{h}_n \to \text{adj} (\Delta) \textbf{g},\label{eq54}\end{equation} in $L^2(\mathbb{C}^n)$. By our assumption, there exists an analytic outer function $d$ with $|d| = |\det \Delta|= (1-\sum_{i=1}^n |b_i|^2)^{1/2}$ almost everywhere on $\T$. Then $\det \Delta = \psi d$ for some measurable function $\psi$ of modulus $1$ almost everywhere on $\T$, and $\det \Delta \textbf{h}_n = \psi d\textbf{h}_n \in \psi H^2(\mathbb{C}^n),$ where $\psi H^2(\mathbb{C}^n)$ is norm-closed and contains no non-zero function which vanishes on a set of positive measure on $\T$. But by \eqref{eq54} we have $\text{adj} (\Delta) \textbf{g} \in \psi H^2(\mathbb{C}^n)$, which is a contradiction. Thus $W = \{0\}$ in \eqref{eq53}, so that $\conj{\Delta H^2(\mathbb{C}^n)} = \Theta H^2(\mathbb{C}^m)$ and \thref{modeltheoremanalytic} applies. We have $1-\sum_{i=1}^n |b_i(\z)|^2 > 0$ almost everywhere on $\T$, and therefore $\Delta (\z)$ is invertible almost everywhere on $\T$. This implies that $m = n$. Since $\textbf{A}(\z) = \Theta(\z)^* \Delta(\z)$, we see that $\textbf{A}(z)$ is an $n$-by-$n$ matrix-valued outer function. In particular, $\textbf{A}(z)$ is invertible at every $z \in \D$. Let $J$ be the embedding of \thref{modeltheoremanalytic} and $Jf = (f, \textbf{f}_1)$, where $f \in \h[\textbf{B}]$ is arbitrary. We claim that we can find a vector $\textbf{c}_f \in \mathbb{C}^n$ such that \begin{equation}\textbf{B}(\zeta)^*\z f(\z) + \textbf{A}(\z)^* (\z\textbf{f}_1(\z) + \textbf{c}_f) \in \conj{H^2_0(\mathbb{C}^n)}.\label{eq554} \end{equation} Indeed, by (i) of \thref{modeltheoremanalytic} we have that $\textbf{B}(\z)^*f + \textbf{A}(\z)^*\textbf{f}_1(\z) \in \conj{H^2_0(\mathbb{C}^n)}.$ Let $\textbf{v}$ be the zeroth-order coefficient of the coanalytic function $\z \textbf{B}(\z)^*f + \z \textbf{A}(\z)^*\textbf{f}_1(\z)$. It then suffices to take $\textbf{c}_f = (\textbf{A}(0)^*)^{-1}\textbf{v}$ for \eqref{eq554} to hold. Thus by (i) of \thref{modeltheoremanalytic} we have that $zf(z) \in \h[\textbf{B}]$, which completes the proof.
\end{proof}

From now on we will be working exclusively with $M_z$-invariant $\h[\textbf{B}]$-spaces of finite rank, and $J$ will always denote the embedding of $\h[\textbf{B}]$-space that appears in \thref{modeltheoremanalytic}.

\begin{lemma} \thlabel{shiftspecrad}
	If a finite rank $\h[\textbf{B}]$-space is $M_z$-invariant, then the spectral radius of the operator $M_z$ equals 1.
\end{lemma}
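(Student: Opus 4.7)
The lower bound $r(M_z) \geq 1$ comes for free: since $M_z^* k_\lambda = \conj{\lambda}k_\lambda$ for every $\lambda \in \D$, the whole closed disk already sits inside $\sigma(M_z)$. The substance of the proof is the upper bound $r(M_z) \leq 1$, which I plan to obtain by realizing $M_z$, transported through the isometric model, as a finite-rank perturbation of a contraction, and then invoking Fredholm-theoretic stability of the spectrum.

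By \thref{noWlemma} the analytic model of \thref{modeltheoremanalytic} applies, so there is an isometric embedding $J: \h[\textbf{B}] \to K := H^2(X) \oplus H^2(Y_1)$ whose image equals the coinvariant subspace $U^\perp$, where $U = \{(\textbf{B}\textbf{h}, \textbf{A}\textbf{h}) : \textbf{h} \in H^2(Y)\}$ is invariant under the isometric shift $\widetilde M_z(f, \textbf{f}_1) := (zf, z\textbf{f}_1)$ on $K$. The compression $T := P_{U^\perp}\widetilde M_z|_{U^\perp}$ is therefore a contraction on $U^\perp$, and in particular $\sigma(T) \subseteq \cD$. The calculation carried out at the end of the proof of \thref{shiftinvcriterion} identifies
\[ JM_zJ^{-1}v = \widetilde M_z v + (0, \textbf{c}_v), \qquad v = (f, \textbf{f}_1) \in U^\perp, \]
where $\textbf{c}_v = (\textbf{A}(0)^*)^{-1}\textbf{v} \in Y_1$ is the constant vector constructed there. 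Applying $P_{U^\perp}$ to this identity yields the decomposition
\[ JM_zJ^{-1} = T + R, \qquad Rv := P_{U^\perp}(0, \textbf{c}_v), \]
and since the range of $R$ is contained in the at most $n$-dimensional space $P_{U^\perp}(\{0\} \oplus Y_1)$, the perturbation $R$ is of finite rank, hence compact.

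The conclusion is then a short Fredholm argument. The operator $M_z$ has no eigenvalues, since $M_z f = \mu f$ forces $(z - \mu)f(z) \equiv 0$ and hence $f \equiv 0$; so it suffices to show that $M_z - \mu$ is Fredholm of index zero for every $|\mu| > 1$. Because $T$ is a contraction, $T - \mu$ is invertible for such $\mu$, hence Fredholm of index zero, and invariance of the Fredholm index under compact perturbations transfers this property to $M_z - \mu$. An injective Fredholm operator of index zero is invertible, so $\mu \notin \sigma(M_z)$, giving $\sigma(M_z) \subseteq \cD$ and $r(M_z) \leq 1$. The step I expect to need the most care is the verification that $\textbf{c}_v$ is truly a \emph{constant} vector of $Y_1$ and not merely a general element of $H^2(Y_1)$, as only this makes the range of $R$ finite-dimensional; once that point is checked, the Fredholm stability argument is standard.
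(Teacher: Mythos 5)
Your proposal is correct and follows essentially the same route as the paper: transport $M_z$ through the model $J$, observe via the identity $JM_zf=(zf,\,z\textbf{f}_1+\textbf{c}_f)$ (with $\textbf{c}_f\in\mathbb{C}^n$ a genuinely constant vector, as the construction in the proof of \thref{shiftinvcriterion} shows) that it is a finite-rank perturbation of an isometric shift, and conclude by Fredholm index stability plus injectivity of $M_z-\mu$ for $|\mu|>1$. The only cosmetic differences are that you compress the shift to $U^\perp$ to get a contraction on the same space rather than comparing essential spectra on all of $H^2\oplus H^2(\mathbb{C}^n)$, and that you make explicit the (standard) lower bound $r(M_z)\ge 1$ via $M_z^*k_\lambda=\conj{\lambda}k_\lambda$, which the paper leaves implicit.
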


\begin{proof} We have seen above that if $Jf = (f, \textbf{f}_1)$, then $JM_zf = (zf(z), z\textbf{f}_1(z) + \textbf{c}_f),$ where $\textbf{c}_f \in \mathbb{C}^n$ is some vector depending on $f$. This shows that $M_z$ is unitarily equivalent to a finite rank perturbation of the isometric shift operator $(g, \textbf{g}_1) \mapsto (zg, z\textbf{g}_1)$ acting on $H^2 \oplus H^2(\mathbb{C}^n)$. It follows that the essential spectra of the two operators coincide, and so are contained in $\cD$. Thus for $|\lambda| > 1$ the operator $M_z - \lambda$ is Fredholm of index $0$. Since $M_z - \lambda$ is injective, index $0$ implies that that $M_z - \lambda$ is invertible.
\end{proof}

A consequence of \thref{shiftspecrad} is that the operators $(I_{\h[\textbf{B}]} - \conj{\lambda} M_z)^{-1}$ exist for $|\lambda| < 1$. Their action is given by $$(I_{\h[\textbf{B}]} - \conj{\lambda} M_z)^{-1}f(z) = \frac{f(z)}{1-\conj{\lambda}z}.$$ Let $$J\frac{f(z)}{1-\conj{\lambda}z} = \Big(\frac{f(z)}{1-\conj{\lambda}z}, \textbf{g}_{\lambda}(z)\Big)$$ for some $\textbf{g}_\lambda \in H^2(\mathbb{C}^n)$. We compute \begin{gather*}Jf = J(I_{\h[\textbf{B}]}-\conj{\lambda}M_z)(I_{\h[\textbf{B}]}-\conj{\lambda}M_z)^{-1}f  = \Big( f, (1-\conj{\lambda}z)\textbf{g}_\lambda(z) + \textbf{c}_f(\lambda) \Big), \end{gather*} where $\textbf{c}_f(\lambda) \in \mathbb{C}^n$ is some vector depending on $f$ and $\lambda$. By re-arranging we obtain $\textbf{g}_\lambda(z) = \frac{\textbf{f}_1(z) - \textbf{c}_f(\lambda)}{1-\conj{\lambda}z},$
and so \begin{equation}
	J\frac{f(z)}{1-\conj{\lambda}z} = \Big(\frac{f(z)}{1-\conj{\lambda}z}, \frac{\textbf{f}_1(z) - \textbf{c}_f (\lambda)}{1-\conj{\lambda}z}\Big). \label{eq43}
\end{equation} We will now show that $\textbf{c}_f(\lambda)$ is actually a coanalytic function of $\lambda$ which admits non-tangential boundary values almost everywhere on $\T$. To this end, let $\textbf{u}_f(z) \in \conj{H^2_0(\mathbb{C}^n)}$ be the coanalytic function with boundary values $$\textbf{u}_f(\z) = \textbf{B}(\z)^*f(\z) + \textbf{A}(\z)^*\textbf{f}_1(\z).$$ By (i) of \thref{modeltheoremanalytic} and \eqref{eq43} we have 
\begin{equation}
	\frac{\textbf{B}(\z)^*f(\z) + \textbf{A}(\z)^*\textbf{f}_1(\z)}{1 - \conj{\lambda}\z} - \frac{\textbf{A}(\z)^*\textbf{c}_f(\lambda)}{1-\conj{\lambda}\z} = \frac{\textbf{u}_f(\z)}{1 - \conj{\lambda}\z} - \frac{\textbf{A}(\z)^*\textbf{c}_f(\lambda)}{1-\conj{\lambda}\z}\in \conj{H^2_0(\mathbb{C}^n)}, \label{eq44}
\end{equation} and projecting this equation onto $H^2(\mathbb{C}^n)$ we obtain \begin{equation} \textbf{u}_f(\lambda) = A(\lambda)^*\textbf{c}_f(\lambda).
\label{eq45}
\end{equation} Since $\textbf{A}(\lambda)^*$ is coanalytic and invertible for every $\lambda \in \D$, we get that $$\textbf{c}_f(\lambda) = (\textbf{A}(\lambda)^*)^{-1}\textbf{u}_f(\lambda) = \frac{\text{adj}(\textbf{A}(\lambda)^*)\textbf{u}_f(\lambda)}{\conj{\det \textbf{A}(\lambda)}}$$ is coanalytic. Moreover, the last expression shows that $\textbf{c}_f(\lambda)$ is in the coanalytic Smirnov class $\conj{N^+(\mathbb{C}^n)}$, and thus admits non-tangential limits almost everywhere on $\T$. For the boundary function we have the equality \begin{gather}
\textbf{c}_f(\z) = (\textbf{A}(\z)^*)^{-1}\textbf{u}_f(\z) = (\textbf{A}(\z)^*)^{-1}\textbf{B}(\z)^*f(\z) + \textbf{f}_1(\z) \label{eq51}
\end{gather} almost everywhere on $\T$. The following proposition summarizes the discussion above.

\begin{prop} \thlabel{cfprop}
	Let $\h[\textbf{B}]$ be of finite rank and $M_z$-invariant. If $f \in \h[\textbf{B}]$, then for all $\lambda \in \D$ we have $\frac{f(z)}{1-\conj{\lambda}z} \in \h[\textbf{B}]$ and $$J\frac{f(z)}{1-\conj{\lambda}z} = \Big(\frac{f(z)}{1-\conj{\lambda}z}, \frac{\textbf{f}_1(z) - \textbf{c}_f (\lambda)}{1-\conj{\lambda}z}\Big),$$ where $\textbf{c}_f$ is a coanalytic function of $\lambda$ which admits non-tangential boundary values almost everywhere on $\T$, and \eqref{eq51} holds.
\end{prop}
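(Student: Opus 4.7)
The plan is to assemble the computations carried out in the discussion preceding the proposition. First, by \thref{shiftspecrad} the operator $I_{\h[\textbf{B}]} - \conj{\lambda}M_z$ is invertible for every $\lambda \in \D$, and since multiplication by $1-\conj{\lambda}z$ realizes this operator, its inverse sends $f$ to $f(z)/(1-\conj{\lambda}z)$. This immediately gives $f(z)/(1-\conj{\lambda}z)\in\h[\textbf{B}]$. Writing $J(f(z)/(1-\conj{\lambda}z)) = (f(z)/(1-\conj{\lambda}z), \textbf{g}_\lambda(z))$ for some $\textbf{g}_\lambda \in H^2(\mathbb{C}^n)$ and applying $J(I_{\h[\textbf{B}]}-\conj\lambda M_z)$ to obtain $Jf=(f,\textbf{f}_1)$, I would compare second coordinates using that part (ii) of \thref{modeltheoremanalytic} combined with the analysis in the proof of \thref{shiftinvcriterion} shows the $M_z$-action on the second coordinate is $z\textbf{f}_1(z)+\textbf{c}$ for some $\textbf{c}\in\mathbb{C}^n$. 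Solving the resulting identity yields
\[
\textbf{g}_\lambda(z) = \frac{\textbf{f}_1(z)-\textbf{c}_f(\lambda)}{1-\conj{\lambda}z}
\]
for a uniquely determined vector $\textbf{c}_f(\lambda)\in\mathbb{C}^n$ depending on $f$ and $\lambda$.

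Next I would identify $\textbf{c}_f(\lambda)$ by re-imposing the constraint (i) of \thref{modeltheoremanalytic}. Setting $\textbf{u}_f:=\textbf{B}^*f+\textbf{A}^*\textbf{f}_1\in\conj{H^2_0(\mathbb{C}^n)}$, this constraint is equivalent to
\[
\frac{\textbf{u}_f(\z)}{1-\conj\lambda\z}-\frac{\textbf{A}(\z)^*\textbf{c}_f(\lambda)}{1-\conj\lambda\z}\in\conj{H^2_0(\mathbb{C}^n)}.
\]
Projecting onto $H^2(\mathbb{C}^n)$ via the reproducing kernel $1/(1-\conj\lambda z)$ extracts point evaluation at $\lambda$ of the analytic parts on each side, yielding $\textbf{u}_f(\lambda)=\textbf{A}(\lambda)^*\textbf{c}_f(\lambda)$. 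By \thref{noWlemma} together with \thref{shiftinvcriterion}, $\textbf{A}$ is a square outer matrix-valued function with $\det\textbf{A}\not\equiv0$, so $\textbf{A}(\lambda)^*$ is invertible for every $\lambda\in\D$, and the adjugate formula gives
\[
\textbf{c}_f(\lambda)=\frac{\operatorname{adj}(\textbf{A}(\lambda)^*)\,\textbf{u}_f(\lambda)}{\conj{\det\textbf{A}(\lambda)}}.
\]
The numerator is coanalytic, and $\det\textbf{A}$ is a bounded outer function in $\D$, so $\textbf{c}_f$ is the quotient of a coanalytic $H^2$-function by a coanalytic bounded outer function. In particular $\textbf{c}_f\in\conj{N^+(\mathbb{C}^n)}$, which ensures existence of non-tangential boundary values almost everywhere on $\T$.

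Finally, the boundary identity \eqref{eq51} follows by taking non-tangential limits in $\textbf{A}(\lambda)^*\textbf{c}_f(\lambda)=\textbf{B}(\lambda)^*f(\lambda)+\textbf{A}(\lambda)^*\textbf{f}_1(\lambda)$ and solving for $\textbf{c}_f(\z)$. This solution is legitimate because \thref{shiftinvcriterion} guarantees $|\det\textbf{A}(\z)|^2=1-\sum_{i=1}^n|b_i(\z)|^2>0$ almost everywhere on $\T$, so $\textbf{A}(\z)$ is invertible almost everywhere on the circle.

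The main (modest) obstacle is the justification that projecting onto $H^2(\mathbb{C}^n)$ with the Cauchy kernel $1/(1-\conj\lambda z)$ genuinely extracts point evaluation at $\lambda$ of the analytic parts, and relatedly that both $\textbf{u}_f/(1-\conj\lambda z)$ and $\textbf{A}^*\textbf{c}_f(\lambda)/(1-\conj\lambda z)$ are integrable on $\T$ with a clean decomposition into analytic and coanalytic parts; this is straightforward since $\textbf{u}_f\in\conj{H^2_0(\mathbb{C}^n)}$ and $\textbf{A}^*$ is bounded, but it is the only step where care is required.
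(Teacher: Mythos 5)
Your proposal is correct and follows essentially the same route as the paper's own derivation: invertibility of $I-\conj{\lambda}M_z$ via \thref{shiftspecrad}, comparison of second coordinates under the $M_z$-action to produce $\textbf{c}_f(\lambda)$, projection of the orthogonality constraint onto $H^2(\mathbb{C}^n)$ to get $\textbf{u}_f(\lambda)=\textbf{A}(\lambda)^*\textbf{c}_f(\lambda)$, and the adjugate formula to place $\textbf{c}_f$ in $\conj{N^+(\mathbb{C}^n)}$. The one step you flag as requiring care (that the Cauchy-kernel projection extracts the coanalytic extension evaluated at $\lambda$) is exactly the computation the paper performs in passing from \eqref{eq44} to \eqref{eq45}, and your justification of it is sound.
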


\subsection{Density of polynomials.} Since we are assuming that $1 \in \h[\textbf{B}]$, the $M_z$-invariance implies that the polynomials are contained in the space. 

\begin{thm}\thlabel{polydense}
	If finite rank $\h[\textbf{B}]$ is $M_z$-invariant, then the polynomials are dense in $\h$.
\end{thm}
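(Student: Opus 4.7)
The plan combines \thref{conttheoremcor}, the growth bound implied by \thref{shiftspecrad}, and an approximation by dilations.

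First, by \thref{conttheoremcor} applied to the $L$-invariant subspace $\M = \h[\textbf{B}]$, the set $\h[\textbf{B}] \cap \A$ is dense in $\h[\textbf{B}]$, so it suffices to approximate every $f \in \h[\textbf{B}] \cap \A$ by polynomials in $\h[\textbf{B}]$-norm.

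Second, since $M_z$ has spectral radius $1$ on $\h[\textbf{B}]$ by \thref{shiftspecrad}, Gelfand's formula yields, for each $\epsilon > 0$, a constant $C_\epsilon > 0$ with
\[
\|z^k\|_{\h[\textbf{B}]} = \|M_z^k \cdot 1\|_{\h[\textbf{B}]} \leq C_\epsilon (1+\epsilon)^k.
\]
Consequently, any function $g$ holomorphic in $\{|z| < \rho\}$ for some $\rho > 1$ lies in $\h[\textbf{B}]$ and is approximated there by its Taylor polynomials: choosing $\epsilon$ with $1+\epsilon < \rho$, the geometric decay $|g_k| \lesssim r^{-k}$ (for $r \in (1+\epsilon, \rho)$) forces $\sum_k g_k z^k$ to converge absolutely in $\h[\textbf{B}]$.

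Third, for the given $f \in \h[\textbf{B}] \cap \A$ I pass to the dilations $f_\rho(z) := f(\rho z)$ with $\rho \in (0,1)$. Each $f_\rho$ extends holomorphically to $\{|z| < \rho^{-1}\}$, a neighborhood of $\cD$, hence is approximable by polynomials in $\h[\textbf{B}]$-norm by the previous paragraph. The theorem follows once $f_\rho \to f$ in $\h[\textbf{B}]$-norm as $\rho \to 1^-$ is established.

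This last convergence is the main obstacle and technical heart of the argument. The model of \thref{modeltheoremanalytic} applies here since $W = \{0\}$ by \thref{noWlemma}; writing $Jf = (f, \textbf{f}_1)$ and $Jf_\rho = (f_\rho, (f_\rho)_1)$, the isometry $J$ reduces the problem to showing $f_\rho \to f$ in $H^2$ (immediate from uniform convergence on $\cD$) together with $(f_\rho)_1 \to \textbf{f}_1$ in $H^2(\mathbb{C}^n)$. For the latter, I would use \eqref{eq51}, which expresses $\textbf{f}_1(\zeta) = \textbf{c}_f(\zeta) - (\textbf{A}(\zeta)^*)^{-1}\textbf{B}(\zeta)^* f(\zeta)$ almost everywhere on $\T$; writing the analogous identity for $f_\rho$ and subtracting exposes a cancellation of the possibly-unbounded factor $(\textbf{A}^*)^{-1}$, and the uniform convergence $f_\rho \to f$ on $\cD$, combined with the Smirnov-class representation $\textbf{c}_f(\lambda) = (\textbf{A}(\lambda)^*)^{-1}\textbf{u}_f(\lambda)$ with $\textbf{u}_f \in \conj{H^2_0(\mathbb{C}^n)}$ used in the derivation of \thref{cfprop}, should yield convergence in $H^2(\mathbb{C}^n)$.
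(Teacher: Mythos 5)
Your first three steps are sound: the reduction to $\h[\textbf{B}]\cap\A$ via \thref{conttheoremcor}, the bound $\|z^k\|_{\h[\textbf{B}]}\le C_\epsilon(1+\epsilon)^k$ from \thref{shiftspecrad}, and the conclusion that functions holomorphic in a neighbourhood of $\cD$ (in particular the dilations $f_\rho$) lie in $\h[\textbf{B}]$ and are norm limits of their Taylor polynomials. The gap is the final step, $f_\rho\to f$ in $\h[\textbf{B}]$, which you correctly identify as the heart of the matter but do not actually prove. In the model of \thref{modeltheoremanalytic} the second component is the unique solution of $P_+(\textbf{B}^*f+\textbf{A}^*\textbf{f}_1)=0$, that is $T_{\textbf{A}^*}\textbf{f}_1=-T_{\textbf{B}^*}f$, so that
\[
(f_\rho)_1-\textbf{f}_1=-T_{\textbf{A}^*}^{-1}T_{\textbf{B}^*}(f_\rho-f).
\]
The Toeplitz operator $T_{\textbf{A}^*}$ is injective with dense range because $\textbf{A}$ is outer, but its inverse is unbounded unless $\textbf{A}^{-1}\in H^\infty$; the two occurrences of $(\textbf{A}^*)^{-1}$ in your subtraction of \eqref{eq51} act on different functions ($\textbf{u}_f-\textbf{u}_{f_\rho}$ and $\textbf{B}^*(f-f_\rho)$ respectively) and there is no cancellation between them. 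Uniform convergence of $f_\rho-f$ on $\cD$ therefore gives no control of $(f_\rho)_1-\textbf{f}_1$ in $H^2(\mathbb{C}^n)$: membership of $f$ in $\A$ constrains only the first component, while $\textbf{f}_1$ is essentially an arbitrary $H^2$ function. This is not a repairable technicality: already in the rank-one case ($\h(b)$ with $b$ non-extreme) it is known that dilations need not converge, and one can even have $\|f_r\|_{\h(b)}\to\infty$ as $r\to 1^-$, so a dilation-based proof of polynomial density cannot work in this generality.

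The paper's proof is a duality argument that sidesteps radial approximation entirely. If $f\perp$ polynomials, then $f\perp(1-\conj{\lambda}z)^{-1}$ for every $\lambda\in\D$ (these kernels belong to the space by exactly your second step). Writing this orthogonality out in the model, using $J(1-\conj{\lambda}z)^{-1}=\big((1-\conj{\lambda}z)^{-1},-(1-\conj{\lambda}z)^{-1}(\textbf{A}(\lambda)^*)^{-1}\textbf{B}(\lambda)^*1\big)$, yields $f(\lambda)=\textbf{B}(\lambda)\textbf{A}(\lambda)^{-1}\textbf{f}_1(\lambda)$; setting $\textbf{h}=\textbf{A}^{-1}\textbf{f}_1\in N^+(\mathbb{C}^n)$ and applying the Smirnov maximum principle gives $\textbf{h}\in H^2(\mathbb{C}^n)$ and $Jf=(\textbf{B}\textbf{h},\textbf{A}\textbf{h})\in(J\h[\textbf{B}])^\perp$, whence $f=0$. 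If you want to salvage your outline, replace the dilation step by this annihilator computation.
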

\begin{proof} Since $J1 = (1, 0)$, an application of  \thref{cfprop} and \eqref{eq51} shows that
	\begin{equation} J\frac{1}{1-\conj{\lambda}z} = \Big(\frac{1}{1-\conj{\lambda}z}, -\frac{(\textbf{A}(\lambda)^*)^{-1}\textbf{B}(\lambda)^*1}{1-\conj{\lambda}z}\Big), \quad \lambda \in \D \label{eq55}
	\end{equation}  Assume that $f \in \h[\textbf{B}]$ is orthogonal to the polynomials. Then it follows that $Jf = (f, \textbf{f}_1)$ is orthogonal in $H^2 \oplus H^2(\mathbb{C}^n)$ to tuples of the form given by \eqref{eq55}. Thus
	\begin{gather*}
		0 = f(\lambda) - \ip{\textbf{f}_1(\lambda)}{(\textbf{A}(\lambda)^*)^{-1}\textbf{B}(\lambda)^*1}_{\mathbb{C}^n} \\ = f(\lambda)  - \textbf{B}(\lambda) (\textbf{A}(\lambda))^{-1}\textbf{f}_1(\lambda).
	\end{gather*} Set $$\textbf{h}(z) = (\textbf{A}(z))^{-1}\textbf{f}_1(z) = \frac{\text{adj}(\textbf{A}(z))\textbf{f}_1(z)}{\det \textbf{A}(z)}.$$ The last expression shows that $\textbf{h}$ belongs to the Smirnov class $N^+(\mathbb{C}^n)$. By the Smirnov maximum principle we furthermore have $\textbf{h} \in H^2(\mathbb{C}^n)$, since \begin{gather*}\int_{\T} \|\textbf{h}\|^2_{\mathbb{C}^n} d\m(\z) = \int_{\T} \|\textbf{B}\textbf{h}\|^2_{\mathbb{C}^n} d\m + \int_{\T} \|\textbf{A}\textbf{h}\|^2_{\mathbb{C}^n} d\m \\ = \int_{\T} |f|^2 d\m + \int_{\T} \|\textbf{f}_1\|^2_{\mathbb{C}^n} d\m = \|f\|^2_{\h[\textbf{B}]} < \infty.\end{gather*} Hence $Jf = (f,\textbf{f}_1) = (\textbf{B}\textbf{h}, \textbf{A} \textbf{h})$ with $\textbf{h} \in H^2(\mathbb{C}^n)$, so $Jf \in (J\h[\textbf{B}])^\perp$ and it follows that $f = 0$.
\end{proof} 
%
%\subsection{Isomorphism.} \label{isomorphismsubsec} There is still a fundamental question to be answered regarding finite rank $\h[\textbf{B}]$-spaces. Namely, is this class essentially bigger than the class of de Branges-Rovnyak spaces $\h(b)$, or is every $\h[\textbf{B}]$ space just a re-normed version of some $\h(b)$-space? As mentioned in the introduction, \cite{ransforddbrdirichlet} shows that this happens in the case $\h[\textbf{B}] = \mathcal{D}(\mu)$ for $\mu$ a finite set of point masses. To make this notion precise, we will say that two spaces $\h_0$ and $\h_1$ consisting of analytic functions are \textit{isomorphic} if the equality of sets $\h_0 = \h_1$ holds, and there is a norm equivalence $\|f\|_{\h_0} \simeq \|f\|_{\h_1}$. 

\subsection{Equivalent norms.} \label{isomorphismsubsec} In the finite rank case we meet a natural but fundamental question whether the rank of an $\h[\textbf{B}]$-space can be reduced with help of an equivalent norm. Here we assume that the space satisfies (A.1)-(A.3) with respect to the new norm. If that is the case, then the renormed space is itself an $\h[\textbf{D}]$-space for some function $\textbf{D}$. We will say that the spaces $\h[\textbf{B}]$ and $\h[\textbf{D}]$ are \textit{equivalent} if they are equal as sets and the two induced norms are equivalent. As mentioned in the introduction, \cite{ransforddbrdirichlet} shows that in the case $\h[\textbf{B}] = \mathcal{D}(\mu)$ for $\mu$ a finite set of point masses the space is equivalent to a rank one $\h(b)$-space. 

It is relatively easy to construct a $\textbf{B} = (b_1, b_2)$ such that $\h[\textbf{B}] = \h(b) \cap K_\theta$, where $b$ is non-extreme and $K_\theta = H^2 \ominus \theta H^2$. For appropriate choices of $b$ and $\theta$, the space $\h(b) \cap K_\theta$ cannot be equivalent to a de Branges-Rovnyak space. We shall not go into details of the construction, but we mention that it can be deduced from the proof of \thref{Linvsubspaces}. However, it is less obvious how to obtain an example of an $\h[\textbf{B}]$-space which is $M_z$-invariant and which is not equivalent to a non-extreme de Branges-Rovnyak space. The purpose of this section is to verify existence of such a space, and thus confirm that the class of $M_z$-invariant $\h[\textbf{B}]$-spaces is indeed a non-trivial extension of the $\h(b)$-spaces constructed from non-extreme $b$. The result that we shall prove is the following.

\begin{thm} \thlabel{nonsimilaritytheorem} For each integer $n \geq 1$ there exists $\textbf{B} = (b_1, \ldots, b_n)$ such that $\h[\textbf{B}]$ is $M_z$-invariant and has the following property: if $\h[\textbf{B}]$ is equivalent to a space $\h[\textbf{D}]$ with $\textbf{D} = (d_1, \ldots, d_m)$, then $m \geq n$. 
\end{thm}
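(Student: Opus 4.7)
The plan is to exploit \thref{modulegen} mentioned earlier in the paper, which relates the rank of $\h[\textbf{B}]$ to the minimum number of generators of a specific $H^\infty$-submodule $\mathcal{M}_{\textbf{B}}$ inside the Smirnov class $N^+$. The overall strategy is: (1) show that this module invariant is stable under passage to an equivalent norm, and (2) exhibit a concrete $\textbf{B}=(b_1,\ldots,b_n)$ for which the module requires at least $n$ generators. Given such a $\textbf{B}$, any equivalent representation $\h[\textbf{D}]$ with $\textbf{D}=(d_1,\ldots,d_m)$ must then satisfy $m\ge n$.

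For Step (1), I would verify that if $\h[\textbf{B}]=\h[\textbf{D}]$ with equivalent norms, then their associated submodules $\mathcal{M}_{\textbf{B}}$ and $\mathcal{M}_{\textbf{D}}$ coincide, perhaps up to multiplication by a nowhere-vanishing $H^\infty$ function. The reason is that \thref{modulegen} describes the module intrinsically through $\h[\textbf{B}]$ and its intersection with $N^+$, using only the topological vector space structure (bounded evaluation functionals, $L$-invariance, and the norm up to equivalence). Equivalence of norms preserves these features, so the minimum number of $H^\infty$-generators is an invariant of the underlying Hilbert space as a subset of $\text{Hol}(\D)$.

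For Step (2), the construction I would attempt is to take $b_i = \alpha\,\theta_i$, where $\theta_1,\ldots,\theta_n$ are Blaschke products (or more generally inner functions) whose zero sets are mutually disjoint and accumulate at $n$ distinct boundary points of $\T$, and $\alpha>0$ is small enough that $\sum_{i=1}^n|b_i(z)|^2<1$ on $\D$. Then $1-\sum|b_i|^2$ is bounded and bounded away from zero on $\T$, so $\log(1-\|\textbf{B}\|_2^2)\in L^1(\T)$ and \thref{shiftinvcriterion} ensures that $\h[\textbf{B}]$ is $M_z$-invariant. Linear independence of the $\theta_i$'s gives $\h[\textbf{B}]$ rank at most $n$, and one expects rank exactly $n$ since no nontrivial $H^\infty$-combination of the $\theta_i$'s is proportional to a single Blaschke product.

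The hard part is the module-generation lower bound in Step (2): proving that $\mathcal{M}_{\textbf{B}}$ genuinely needs $n$ generators. The key algebraic input should be that inner functions with pairwise-disjoint zero sets generate an ideal in $H^\infty$ whose minimum number of generators equals their count; this can be established by localizing at each accumulation point on $\T$ via evaluation through a corona-theorem style argument, or by using the fact that a Blaschke product with zeros clustering at $\zeta\in\T$ cannot lie in the $H^\infty$-module generated by Blaschke products whose zero sets avoid a neighbourhood of $\zeta$. One then transfers this obstruction to $\mathcal{M}_{\textbf{B}}$ using the explicit description from \thref{modulegen}. This last translation between the abstract module and the concrete $b_i$'s is where I expect the main technical difficulty; it may require refining the choice of $\theta_i$ (for instance, taking Frostman Blaschke products so that the inner-outer factorization behaves well) or an induction on $n$ to avoid having to solve the algebra problem directly in full generality.
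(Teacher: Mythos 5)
Your Step (1) is essentially \thref{modulegen}, which the paper proves and which you may take as given; the reduction of the theorem to a module-generation problem is the right frame. The fatal problem is Step (2). First, the module in \thref{modulegen} is generated by $\{1,c_1,\ldots,c_n\}$, where $\textbf{c}(\lambda)=-(\textbf{A}(\lambda)^*)^{-1}\textbf{B}(\lambda)^*1$ involves the outer matrix $\textbf{A}$ with $\textbf{A}^*\textbf{A}=I-\textbf{B}^*\textbf{B}$ on $\T$; it is \emph{not} the module generated by the $b_i$ themselves. With your choice $b_i=\alpha\theta_i$ and $\alpha$ small, $1-\sum_i|b_i|^2\ge 1-n\alpha^2>0$ on $\T$, so $\textbf{A}$ is invertible in $H^\infty(\mathbb{C}^{n\times n})$, each $c_i$ lies in $\conj{H^\infty}$, and the module generated by $\{1,c_1,\ldots,c_n\}$ is all of $\conj{H^\infty}$ --- generated by $1$ alone. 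By \thref{modulegen} your space is then equivalent to $H^2$, i.e.\ $m=0$ is achievable: your example proves the opposite of what you want. Second, your ``key algebraic input'' is false: if $\theta_1,\ldots,\theta_n$ are Blaschke products with disjoint zero sets accumulating at distinct boundary points, then $\sum_i|\theta_i|$ is bounded below on $\D$ (near $\zeta_j$ the factors $\theta_i$, $i\ne j$, are bounded below, and the zero sets are disjoint on compacts), so by the corona theorem the ideal they generate is all of $H^\infty$, with a single generator. Separation of zeros is exactly the wrong condition; what is needed is a \emph{failure} of the corona condition.

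The paper's proof runs in the opposite direction and uses a genuinely different algebraic example. It first prescribes the functions $c_i=\conj{f_i/u}\in\conj{N^+}$, where $u$ is outer, $I$ is inner, $(u,I)$ is \emph{not} a corona pair (they share a common zero $\xi$ in the maximal ideal space of $H^\infty$), and $f_k=I^ku^{n-k}$; a version of Mortini's lemma (\thref{mingenideal}) shows the ideal generated by $\{u^n,f_1,\ldots,f_n\}$ needs $n+1$ generators, precisely because every relation $\sum\phi_k f_k=0$ forces all $\phi_k$ to vanish at $\xi$. It then constructs a $\textbf{B}$ realizing these prescribed $c_i$ via a Beurling--Lax argument (\thref{clemma}) applied to $\text{clos}\{(\textbf{D}\textbf{h},\textbf{U}\textbf{h}):\textbf{h}\in H^2(\mathbb{C}^n)\}$, rather than trying to compute $\textbf{c}$ from a hand-picked $\textbf{B}$. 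Both the non-corona requirement and the reversal of the direction of construction are essential and are missing from your proposal.
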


Our first result in the direction of the proof of \thref{nonsimilaritytheorem} is interesting in its own right and gives a criterion for when two spaces $\h[\textbf{B}]$ and $\h[\textbf{D}]$ are equivalent. Below, $\conj{H^\infty}$ and $\conj{N^+}$ will denote the spaces of complex conjugates of functions in $H^\infty$ and $N^+$, respectively. 

\begin{thm} \thlabel{modulegen}
	Let $\h[\textbf{B}]$ with $\textbf{B} = (b_1, \ldots, b_n)$ and $\h[\textbf{D}]$ with $\textbf{D} = (d_1, \ldots, d_m)$ be two spaces with embeddings $J_1: \h[\textbf{B}] \to H^2 \oplus H^2(\mathbb{C}^n)$ and $J_2: \h[\textbf{D}] \to H^2 \oplus H^2(\mathbb{C}^m)$ as in \thref{modeltheoremanalytic}, such that \begin{gather*}
	J_1\frac{1}{1-\conj{\lambda}z} = \frac{(1,\textbf{c}(\lambda))}{1-\conj{\lambda}z} = \frac{(1, c_1(\lambda), \ldots, c_n(\lambda))}{1-\conj{\lambda}z}, \\ J_2\frac{1}{1-\conj{\lambda}z} = \frac{(1,\textbf{e}(\lambda))}{1-\conj{\lambda}z} = \frac{(1, e_1(\lambda), \ldots, e_m(\lambda))}{1-\conj{\lambda}z}.
	\end{gather*} Then the spaces $\h[\textbf{B}]$ and $\h[\textbf{D}]$ are equivalent if and only if the $\conj{H^\infty}$-submodules of $\conj{N^+}$ generated by $\{1, c_1, \ldots, c_n\}$ and by $\{1, e_1, \ldots, e_m\}$ coincide.
\end{thm}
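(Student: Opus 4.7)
The plan is to reduce the equivalence of the two Hilbert space norms to an explicit Toeplitz-type identity involving the symbols $\textbf{c}$ and $\textbf{e}$. To set this up I would first establish that for every polynomial $p$
\begin{equation*}
\|p\|^2_{\h[\textbf{B}]} = \|p\|^2_2 + \|P_+(p\textbf{c})\|^2_2, \qquad \|p\|^2_{\h[\textbf{D}]} = \|p\|^2_2 + \|P_+(p\textbf{e})\|^2_2,
\end{equation*}
where $P_+$ denotes the orthogonal projection of $L^2(\mathbb{C}^n)$ onto $H^2(\mathbb{C}^n)$ (interpreted in the Fourier-coefficient sense when $p\textbf{c}$ is not a priori in $L^2$). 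Writing $J_1 p = (p, \textbf{p}_1)$ as in \thref{modeltheoremanalytic}, comparison of \eqref{eq55} with the normalization $J_1\frac{1}{1-\conj{\lambda}z} = \frac{(1,\textbf{c}(\lambda))}{1-\conj{\lambda}z}$ gives $(\textbf{A}^*)^{-1}\textbf{B}^*\cdot 1 = -\textbf{c}$, so $(\textbf{A}^*)^{-1}\textbf{B}^* p = -p\textbf{c}$ and from \eqref{eq51} the coanalytic Smirnov function is $\textbf{c}_p = -p\textbf{c} + \textbf{p}_1 \in \conj{N^+(\mathbb{C}^n)}$. The vanishing $\textbf{c}_p(0) = 0$, which comes from $\textbf{u}_p(0) = 0$ combined with invertibility of $\textbf{A}(0)$, means that $\textbf{c}_p$ has no non-negative Fourier coefficients, so applying $P_+$ to the identity $\textbf{p}_1 = \textbf{c}_p + p\textbf{c}$ produces $\textbf{p}_1 = P_+(p\textbf{c})$. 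The same argument in $\h[\textbf{D}]$ yields the companion formula.

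For the sufficient direction, if $M_{\textbf{B}} = M_{\textbf{D}}$ then each $c_i = \conj{g_0^i} + \sum_j \conj{g_j^i} e_j$ with $g_0^i, g_j^i \in H^\infty$. Using the elementary identity $P_+(\conj{h}\psi) = T_{\conj{h}} P_+\psi$ for $h \in H^\infty$, $\psi \in L^2$ (which holds because $\conj{h} P_-\psi$ lies in $\conj{H^2}$ with vanishing $0$-th Fourier coefficient), I would expand
\begin{equation*}
P_+(p c_i) = T_{\conj{g_0^i}} p + \sum_j T_{\conj{g_j^i}} P_+(p e_j),
\end{equation*}
and boundedness of Toeplitz operators with $H^\infty$ symbols gives $\|p\|^2_{\h[\textbf{B}]} \lesssim \|p\|^2_{\h[\textbf{D}]}$. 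The symmetric argument together with density of polynomials (\thref{polydense}) promotes this to equivalence of the spaces.

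For the converse direction, the inequality $\|P_+(p\textbf{c})\|^2_2 \leq C\bigl(\|p\|^2_2 + \|P_+(p\textbf{e})\|^2_2\bigr)$ extends the assignment $(p, P_+(p\textbf{e})) \mapsto P_+(p\textbf{c})$ by continuity from the dense polynomial subset of $J_2\h[\textbf{D}]$ to a bounded operator $C : J_2\h[\textbf{D}] \to H^2(\mathbb{C}^n)$, and a direct Fourier-series computation shows $C$ intertwines the backward shift $L$ (which is coisometric on the ambient $H^2 \oplus H^2(\mathbb{C}^m)$ and restricts to a contraction on the $L$-invariant subspace $J_2\h[\textbf{D}]$). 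The plan is to apply the Sz.-Nagy--Foias commutant lifting theorem to lift $C$ to a bounded $\hat C : H^2 \oplus H^2(\mathbb{C}^m) \to H^2(\mathbb{C}^n)$ that still intertwines $L$. Since bounded operators between vector-valued Hardy spaces commuting with the backward shift are exactly coanalytic Toeplitz operators, $\hat C(f,\textbf{g}) = T_{\conj{\textbf{a}}} f + T_{\conj{\textbf{G}}} \textbf{g}$ for some $\textbf{a} \in H^\infty(\mathbb{C}^n)$ and $\textbf{G} \in H^\infty(\mathbb{C}^{n\times m})$. Evaluating on $(p, P_+(p\textbf{e}))$ and using the Toeplitz identity in reverse gives
\begin{equation*}
P_+\bigl(p(\conj{\textbf{a}} + \conj{\textbf{G}}\textbf{e} - \textbf{c})\bigr) = 0
\end{equation*}
for every polynomial $p$, and running $p = z^k$ over $k \geq 0$ forces every Fourier coefficient of the function $\conj{\textbf{a}} + \conj{\textbf{G}}\textbf{e} - \textbf{c} \in \conj{N^+(\mathbb{C}^n)}$ to vanish. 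Hence $c_i = \conj{a_i} + \sum_j \conj{G_{ij}} e_j \in M_{\textbf{D}}$, and the symmetric argument gives $e_j \in M_{\textbf{B}}$, completing $M_{\textbf{B}} = M_{\textbf{D}}$. The principal technical obstacle is the commutant lifting step, where one must verify that the contraction $L|_{J_2\h[\textbf{D}]}$ admits the ambient coisometry as an appropriate extension so that the lifting yields a genuinely $L$-intertwining operator on the full ambient space.
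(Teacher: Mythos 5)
Your proposal is correct and follows essentially the same route as the paper's proof: the necessity direction rests on the commutant lifting theorem upgrading the intertwining map between the two models to a bounded coanalytic Toeplitz operator, and the sufficiency direction translates the module relations into bounded coanalytic Toeplitz multiplications between the model spaces. The only (cosmetic) difference is that the paper tests these operators on the Cauchy kernels $\frac{1}{1-\conj{\lambda}z}$, whereas you test them on polynomials via the explicit formula $J_1p = (p, P_+(p\textbf{c}))$, which is a correct and slightly more computational repackaging of \eqref{eq51} and \eqref{eq55}.
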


\begin{proof}
Assume that $\h[\textbf{B}]$ and $\h[\textbf{D}]$ are equivalent. Let $i: \h[\textbf{B}] \to \h[\textbf{D}]$ be the identity mapping $if = f$. The subspaces $K_1 = J_1\h[\textbf{B}]$ and $K_2 = J_2\h[\textbf{D}]$ are invariant under the backward shift $L$ by \thref{modeltheoremanalytic}, and if $T = J_2iJ_1^{-1} : K_1 \to K_2$, then $LT = TL$. The commutant lifting theorem (see Theorem 10.29 and Exercise 10.31 of \cite{mccarthypick}) implies that $T$ extends to a Toeplitz operator  $T_{\boldsymbol{\Phi}}$, where $\boldsymbol{\Phi}$ is an $(m+1)$-by-$(n+1)$ matrix of bounded coanalytic functions, acting by matrix multiplication followed by the component-wise projection $P_+$ from $L^2$ onto $H^2$: \[ T_{\boldsymbol{\Phi}} (f, f_1, \ldots, f_n)^t = P_+\boldsymbol{\Phi}(f, f_1, \ldots, f_n)^t.\] Thus we have \begin{equation} \frac{(1,\textbf{e}(\lambda))^t}{1-\conj{\lambda}z} = T_{\boldsymbol{\Phi}}\frac{(1,\textbf{c}(\lambda))^t}{1-\conj{\lambda}z} = \frac{\boldsymbol{\Phi}(\lambda)(1, \textbf{c}(\lambda))^t}{1-\conj{\lambda}z} \label{eq600}. \end{equation} By reversing the roles of $\h[\textbf{B}]$ and $\h[\textbf{D}]$, we obtain again a coanalytic Toeplitz operator $T_{\boldsymbol{\Psi}}$ such that \begin{equation}\frac{(1,\textbf{c}(\lambda))^t}{1-\conj{\lambda}z} = T_{\boldsymbol{\Psi}}\frac{(1,\textbf{e}(\lambda))^t}{1-\conj{\lambda}z} = \frac{\boldsymbol{\Psi}(\lambda)(1, \textbf{e}(\lambda))^t}{1-\conj{\lambda}z}. \label{eq601} \end{equation} The equations \eqref{eq600} and \eqref{eq601} show that the $\conj{H^\infty}$-submodule of $\conj{N^+}$ generated by $\{1, c_1, \ldots, c_n\}$ coincides with the $\conj{H^\infty}$-submodule generated by $\{1, e_1, \ldots, e_m\}$. 
	
Conversely, assume that the $\conj{H^\infty}$-submodules of $\conj{N^+}$ generated by $\{1, c_1, \ldots, c_n\}$ and $\{1, e_1, \ldots, e_m\}$ coincide. Then there exists a matrix of coanalytic bounded functions $\boldsymbol{\Phi}$ such that \[\boldsymbol{\Phi}(1,c_1, \ldots, c_n)^t = (1, e_1, \ldots, e_m)^t\] where we can choose the top row of $\boldsymbol{\Phi}$ to equal $(1,0,\ldots, 0)$, and there exists also a matrix of coanalytic bounded functions $\boldsymbol{\Psi}$ such that \[\boldsymbol{\Psi}(1,e_1, \ldots, e_n)^t = (1, c_1, \ldots, c_n)^t\] with top row $(1, 0, \ldots, 0)$. It is now easy to see by density of the elements $\frac{1}{1-\conj{\lambda}z}$ in the spaces $\h[\textbf{B}]$ and $\h[\textbf{D}]$ (which can be deduced from \thref{polydense} and \thref{shiftspecrad}) that, in the notation of the above paragraph, we have $T_{\boldsymbol{\Phi}}K_1 = K_2$, $T_{\boldsymbol{\Psi}}K_2 = K_1$, $T_{\boldsymbol{\Psi}}T_{\boldsymbol{\Phi}} = I_{K_1}$, $T_{\boldsymbol{\Phi}}T_{\boldsymbol{\Psi}} = I_{K_2}$ and thus $\h[\textbf{B}]$ and $\h[\textbf{D}]$ are equivalent. 
	
\end{proof}

\begin{lemma} \thlabel{clemma} Let $\textbf{c} = (c_1, \ldots, c_n)$ be an arbitrary $n$-tuple of functions in  $\conj{N^+}$. There exists a $\textbf{B} = (b_1, \ldots, b_n)$ such that $\h[\textbf{B}]$ is $M_z$-invariant and an embedding $J: \h[\textbf{B}] \to H^2 \oplus H^2(\mathbb{C}^n)$ as in \thref{modeltheoremanalytic} such that \[ J\frac{1}{1-\conj{\lambda}z} = \Bigg( \frac{1}{1-\conj{\lambda}z}, \frac{\textbf{c}(\lambda)}{1-\conj{\lambda}z} \Bigg). \]
\end{lemma}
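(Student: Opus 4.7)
The strategy is to build an outer matrix-valued function $\textbf{A}$ compatible with $\textbf{c}$ and then read off $\textbf{B}$ from the identity $\textbf{B}^* = -\textbf{A}^*\textbf{c}$ on $\T$ suggested by equations (4.11)--(4.12). First observe that evaluating the prescribed embedding at $\lambda=0$ forces $\textbf{c}(0)=0$ (since $J1=(1,0)$), which I will assume throughout; write $\textbf{c}=\conj{\tilde{\textbf{c}}}$ with $\tilde{\textbf{c}}\in N^+(\mathbb{C}^n)$ vanishing at the origin.

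For such a pair $(\textbf{B},\textbf{A})$ to realize the desired embedding via \thref{modeltheoremanalytic}, combining the requirement $\textbf{B}^*\textbf{B}+\textbf{A}^*\textbf{A}=I_n$ with $\textbf{B}^*=-\textbf{A}^*\textbf{c}$ leads to $\textbf{A}^*(I_n+\textbf{c}\textbf{c}^*)\textbf{A}=I_n$ on $\T$, equivalently $\textbf{A}\textbf{A}^*=(I_n+\textbf{c}\textbf{c}^*)^{-1}$ for square $\textbf{A}$. The right-hand side is positive Hermitian and bounded by $1$, with determinant $(1+\|\textbf{c}\|^2)^{-1}$; integrability of $\log(1+\|\textbf{c}\|^2)$ follows componentwise from the Smirnov-class bound $\log^+|\tilde{c}_i|\in L^1(\T)$. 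A matrix-valued outer factorization theorem (Wiener--Masani applied to $\conj{(I_n+\textbf{c}\textbf{c}^*)^{-1}}$ and then transposed, so as to yield $\textbf{A}\textbf{A}^*$ rather than the standard $\textbf{A}^*\textbf{A}$) then produces an outer $\textbf{A}:\D\to\mathbb{C}^{n\times n}$ with $\textbf{A}\textbf{A}^*=(I_n+\textbf{c}\textbf{c}^*)^{-1}$ a.e.\ on $\T$; boundedness of the right-hand side forces $\textbf{A}\in H^\infty$, and $\det\textbf{A}$ is scalar outer, so $\textbf{A}(0)$ is invertible.

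Define $\textbf{B}(z):=-\tilde{\textbf{c}}(z)^T\textbf{A}(z)$, a row-valued function in $N^+(\mathbb{C}^{1\times n})$. The identity on $\textbf{A}$ then gives $\textbf{B}^*\textbf{B}+\textbf{A}^*\textbf{A}=\textbf{A}^*(I_n+\textbf{c}\textbf{c}^*)\textbf{A}=I_n$ on $\T$; taking traces yields $\textbf{B}\textbf{B}^*=n-\mathrm{tr}(\textbf{A}\textbf{A}^*)=\|\textbf{c}\|^2/(1+\|\textbf{c}\|^2)<1$ a.e., so $\textbf{B}\in H^\infty$ by Smirnov's maximum principle, while $\textbf{B}(0)=0$ follows from $\tilde{\textbf{c}}(0)=0$. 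By \thref{kernel}, $\textbf{B}$ produces a space $\h[\textbf{B}]$ satisfying (A.1)--(A.3). The identity $1-\|\textbf{B}\|_2^2=(1+\|\textbf{c}\|^2)^{-1}$ makes $\log(1-\|\textbf{B}\|_2^2)$ integrable on $\T$, so \thref{shiftinvcriterion} yields the required $M_z$-invariance; outerness of $\textbf{A}$ together with $\textbf{A}^*\textbf{A}=I_n-\textbf{B}^*\textbf{B}=\Delta^2$ identifies $\textbf{A}$ as the outer factor appearing in \thref{modeltheoremanalytic}.

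To verify the embedding, take $\textbf{f}(z)=1/(1-\conj{\lambda}z)$ with candidate second component $\textbf{f}_1(z)=\textbf{c}(\lambda)/(1-\conj{\lambda}z)$; by \thref{modeltheoremanalytic}(i) it suffices to show
\[ \frac{\textbf{B}(\z)^* + \textbf{A}(\z)^*\textbf{c}(\lambda)}{1-\conj{\lambda}\z}\in \conj{H^2_0(\mathbb{C}^n)}. \]
Each summand has the form $g(\z)/(1-\conj{\lambda}\z)$ with $g$ bounded coanalytic, and a direct Fourier computation shows that its $H^2$-projection is $g(\lambda)/(1-\conj{\lambda}z)$, where $g(\lambda)$ denotes the value in $\D$ of the antianalytic extension of $g$. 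Summing the two projections yields $(\textbf{B}(\lambda)^* + \textbf{A}(\lambda)^*\textbf{c}(\lambda))/(1-\conj{\lambda}z)=0$ by construction of $\textbf{B}$, confirming the claim. The main obstacle is the matrix-valued outer factorization in its $\textbf{A}\textbf{A}^*$ form and controlling the boundary behavior of the resulting $\textbf{A}$ and $\textbf{B}$ when $\tilde{\textbf{c}}$ is only in the Smirnov class on $\T$.
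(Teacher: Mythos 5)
Your proof is correct in substance but follows a genuinely different route from the paper. The paper factors each $\conj{c_i}=d_i/u_i$ with $u_i\in H^\infty$ outer, forms the $M_z$-invariant manifold $V=\{(\textbf{D}\mathbf{h},\textbf{U}\mathbf{h}):\mathbf{h}\in H^2(\mathbb{C}^n)\}$ with $\textbf{U}=\mathrm{diag}(u_1,\ldots,u_n)$, and obtains $(\textbf{B},\textbf{A})$ as the Beurling--Lax inner representation of $\mathrm{clos}(V)$; the embedding formula is then immediate because the target tuple is visibly orthogonal to $V$ itself, and no explicit factorization of a matrix weight is ever needed. You instead solve the constraint $\textbf{A}^*(I_n+\textbf{c}\textbf{c}^*)\textbf{A}=I_n$ by an outer factorization of $(I_n+\textbf{c}\textbf{c}^*)^{-1}$ (Wiener--Masani, transposed to get the $\textbf{A}\textbf{A}^*$ form) and set $\textbf{B}=-\tilde{\textbf{c}}^T\textbf{A}$; this buys explicit formulas for $\textbf{B}\textbf{B}^*=\|\textbf{c}\|^2/(1+\|\textbf{c}\|^2)$ and for the second component, at the cost of invoking heavier machinery and of the $\textbf{A}\textbf{A}^*$-versus-$\textbf{A}^*\textbf{A}$ bookkeeping you rightly flag (which is handled by transposition, since a square matrix function is outer iff its determinant is). All the estimates you need ($\log(1+\|\textbf{c}\|^2)\in L^1(\T)$ from $\log^+|\tilde c_i|\in L^1$, the Smirnov maximum principle to place $\textbf{A},\textbf{B}$ in $H^\infty$, and the residue computation showing the $H^2$-projection of $(\textbf{B}(\z)^*+\textbf{A}(\z)^*\textbf{c}(\lambda))/(1-\conj{\lambda}\z)$ vanishes) check out. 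Two remarks: your observation that $\textbf{c}(0)=0$ is forced is correct and is a point the paper's proof passes over silently --- for $\textbf{c}(0)\neq 0$ the stated embedding would give $\|1\|_{\h}^2=1+\|\textbf{c}(0)\|^2$, contradicting (A.3), so the lemma really does require this normalization (harmless in its application, since subtracting $\textbf{c}(0)$ does not change the generated $\conj{H^\infty}$-module); and your citation of \thref{kernel} for the existence of $\h[\textbf{B}]$ is slightly off, since that proposition goes in the other direction --- the clean justification is that $\mathbf{h}\mapsto(\textbf{B}\mathbf{h},\textbf{A}\mathbf{h})$ is an isometric $M_z$-commuting map thanks to $\textbf{B}^*\textbf{B}+\textbf{A}^*\textbf{A}=I_n$, so the construction in the proof of \thref{modeltheorem} applied to the orthogonal complement of its range produces the space with the desired kernel.
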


\begin{proof}
	Since $\conj{c_i} \in N^+$, there exists a factorization $\conj{c_i} = d_i/u_i$, where $d_i,u_i \in H^\infty$, and $u_i$ is outer. Let $\textbf{D} = (d_1, \ldots, d_n)$ and $\textbf{U} = \text{diag}(u_1, \ldots, u_n)$. The linear manifold \[ V = \{(\textbf{D}\mathbf{h}, \textbf{U}\mathbf{h}) : \mathbf{h} \in H^2(\mathbb{C}^n)\} \subset H^2 \oplus H^2(\mathbb{C}^n)\] is invariant under the forward shift $M_z$. It follows from general theory of shifts and the Beurling-Lax theorem (see Chapter 1 of \cite{rosenblumrovnyakhardyclasses}) that $M_z$ acting on $\text{clos}(V)$ is a shift of multiplicity $n$, and \[\text{clos}(V) = \{(\textbf{B}\mathbf{h}, \textbf{A}\mathbf{h}) : \mathbf{h} \in H^2(\mathbb{C}^n)\} \] for some analytic $\textbf{B}(z) = (b_1(z), \ldots, b_n(z))$ and $n$-by-$n$ matrix-valued analytic $\textbf{A}(z)$ such that the mapping $\textbf{h} \mapsto (\textbf{B}\textbf{h}, \textbf{A}\textbf{h})$ is an isometry from $H^2(\mathbb{C}^n)$ to $H^2 \oplus H^2(\mathbb{C}^n)$. It is easy to see that $\textbf{A}$ must be outer, since $\textbf{U}$ is, and that $\sum_{i=1}^n |b_i(z)|^2 \leq 1$ for all $z \in \D$. If $P$ is the projection from $H^2 \oplus H^2(\mathbb{C}^n)$ onto the first coordinate $H^2$, then as in the proof of \thref{modeltheorem} we set $\h[\textbf{B}]$ to be the image of $\text{clos}(V)^\perp$ under $P$, $\|f\|_{\h(\textbf{B}} = \|P^{-1}f\|_{H^2 \oplus H^2(\mathbb{C}^n)}$ and $J = P^{-1}$. The tuple $( \frac{1}{1-\conj{\lambda}z}, \frac{\textbf{c}(\lambda)}{1-\conj{\lambda}z} \Big)$ is obviously orthogonal to $V$, so $J\frac{1}{1-\conj{\lambda}z} = \Big( \frac{1}{1-\conj{\lambda}z}, \frac{\textbf{c}(\lambda)}{1-\conj{\lambda}z} \Big)$. The forward shift invariance of $\h[\textbf{B}]$ can be seen using the same argument as in the end of the proof of \thref{shiftinvcriterion}. 
\end{proof}

The next lemma is a version of a result of R. Mortini, see Lemma 2.8 and Theorem 2.9 of \cite{mortini}. The proofs in \cite{mortini} can be readily adapted to prove our version, we include a proof sketch for the convenience of the reader.

\begin{lemma} \thlabel{mingenideal}
	For each $n \geq 1$ there exists an outer function $u \in H^\infty$ and $f_1, \ldots, f_n \in H^\infty$ such that the ideal \[ \Big\{ g_0u + \sum_{i=1}^n g_if_i : g_0, g_1, \ldots, g_n \in H^\infty \Big\} \] cannot be generated by less than $n+1$ functions in $H^\infty$. 
\end{lemma}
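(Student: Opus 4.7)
The plan is to adapt the interpolation-theoretic construction of Mortini, with the additional requirement that one of the generators be outer. I would proceed in four stages.

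First, I would fix an interpolating sequence $\{z_k\}_{k\geq 1} \subset \D$ converging to the boundary, and split it into $n+1$ pairwise disjoint subsequences $Z_0, Z_1, \ldots, Z_n$; any subsequence of an interpolating sequence is interpolating, so each $Z_i$ is itself an interpolating sequence. For $i = 1, \ldots, n$ I would set $f_i := B_i$, the Blaschke product whose zero set is exactly $Z_i$. The outer function $u$ would be chosen so that its modulus tends to zero sufficiently fast along $\bigcup_i Z_i$ but in a controlled way, e.g.\ as the outer function whose boundary modulus equals $\exp(-v)$ for a nonnegative $v \in L^1(\T)$ whose Poisson extension grows at prescribed rates at the different subsequences $Z_i$.

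The second step exploits the Carleson interpolation theorem: the trace map $f \mapsto (f(z_k))_k$ sends $H^\infty$ onto $\ell^\infty$ with quantitative bounds. This allows me to read off the value of any $H^\infty$-linear combination of $u, B_1, \ldots, B_n$ on each $Z_i$ from trace data alone. By the pairwise disjointness and interpolating property, on $Z_i$ (for $i \geq 1$) the function $B_i$ vanishes, the remaining $B_\ell$ stay bounded below, and $u$ is small but nonzero; and the sequence $(u, B_1, \ldots, B_n)(z_k)$ can be arranged to attain $n+1$ linearly independent ``directions'' in $\mathbb{C}^{n+1}$ along the union of the $Z_i$.

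Third, I would suppose for contradiction that the ideal $I = uH^\infty + B_1 H^\infty + \cdots + B_n H^\infty$ were generated by only $n$ functions $h_1, \ldots, h_n$, and write out the mutual representations
\[
u = \sum_{j=1}^n \alpha_j h_j, \qquad B_i = \sum_{j=1}^n \beta_{i,j} h_j, \qquad h_j = \gamma_j u + \sum_{i=1}^n \delta_{j,i} B_i,
\]
with $\alpha_j, \beta_{i,j}, \gamma_j, \delta_{j,i} \in H^\infty$. Restricting these identities along the sequences $Z_0, \ldots, Z_n$ and bookkeeping the resulting relations expresses the $(n+1)$-dimensional spread of $(u, B_1, \ldots, B_n)$ on $\{z_k\}$ as the image of an $n$-dimensional family, forcing a rank collapse that contradicts the construction.

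The main obstacle is the quantitative robustness of this rank argument: the nondegeneracy of the evaluations $(u, B_1, \ldots, B_n)(z_k)$ must persist after multiplication by arbitrary $H^\infty$ coefficients, which is precisely where Mortini's Lemma~2.8 does the delicate work. Once that quantitative nondegeneracy is established, packaging the construction into the present statement—and in particular verifying that $u$ can be taken outer while the obstruction survives—is a routine adaptation.
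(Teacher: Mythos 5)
Your construction fails for $n \geq 2$, and the gap is not merely the ``quantitative robustness'' you defer to Mortini's Lemma~2.8 --- it is that the ideal you build is the whole ring. If $Z_1,\ldots,Z_n$ are pairwise disjoint subsequences of a single interpolating sequence, they are pseudo-hyperbolically separated from one another, so their closures in the maximal ideal space $\mathcal{M}$ of $H^\infty$ are pairwise disjoint; since the zero set in $\mathcal{M}$ of the interpolating Blaschke product $B_i$ is exactly the closure of $Z_i$, the functions $B_1,\ldots,B_n$ have no common zero in $\mathcal{M}$ once $n\ge 2$. Equivalently, $\inf_{z\in\D}\sum_{i=1}^n|B_i(z)|>0$ (a point close to $Z_i$ is far from every $Z_j$, $j\neq i$, where $|B_j|$ is bounded below), so by the corona theorem there exist $h_i\in H^\infty$ with $\sum_i h_iB_i=1$. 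Your ideal $uH^\infty+\sum_i B_iH^\infty$ is therefore all of $H^\infty$ and is generated by the single function $1$. Any viable set of generators must vanish simultaneously at some corona point $\xi$, which is precisely what the paper arranges by taking a non-corona pair $(u,I)$ with $u$ outer and $I$ inner, a common zero $\xi\in\mathcal{M}$, and generators $u^n, Iu^{n-1},\ldots,I^n$.

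The second, independent problem is that no ``rank collapse'' can be detected by evaluation at interior points. Since $u$ is outer it is zero-free in $\D$, so the vector $\big(u(z_k),f_1(z_k),\ldots,f_n(z_k)\big)\in\mathbb{C}^{n+1}$ is nonzero at every $z_k$, and a nonzero vector always lies in the range of a rank-one matrix; because the coefficient matrices in your mutual representations vary with the point, no pointwise linear-algebra obstruction survives. What the argument actually requires --- and what Mortini's lemma supplies --- is a single $\xi\in\mathcal{M}$ at which every $H^\infty$-relation $\phi_0g_0+\cdots+\phi_ng_n=0$ among the generators forces $\phi_0(\xi)=\cdots=\phi_n(\xi)=0$; only then does $(\mathbf{N}\mathbf{M}-I_{n+1})(g_0,\ldots,g_n)^t=0$ yield $\mathbf{N}(\xi)\mathbf{M}(\xi)=I_{n+1}$ and hence $m\ge n+1$. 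This vanishing condition already fails for the Koszul relations $(0,\ldots,g_j,\ldots,-g_i,\ldots,0)$ unless all generators vanish at $\xi$, and even granting that, establishing it needs the graded structure $f_k=I^ku^{n-k}$, which allows factors of the inner function $I$ to be peeled off inductively because the outer function $u$ is not divisible by $I$. Neither ingredient is present in your setup.
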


\begin{proof} Let $\mathcal{M}$ be the maximal ideal space of $H^\infty$. The elements of $H^\infty$ are naturally functions on $\mathcal{M}$, and if $\xi \in \mathcal{M}$, then the evaluation of $f \in H^\infty$ at $\xi$ will be denoted by $f(\xi)$. Let $u$ be a bounded outer function and $I$ be an inner function such that $(u,I)$ is not a corona pair, so that there exists $\xi \in M$ such that $u(\xi) = I(\xi) = 0$. Let $f_k = I^k u^{n-k}$. We claim that the ideal generated by $\{ u^n, f_1, \ldots, f_n \}$ cannot be generated by less than $n+1$ functions. 

	The proof is split into two parts. In the first part we apply the idea contained in Lemma 2.8 of \cite{mortini} to verify the following claim: if $\phi_0, \phi_1, \ldots, \phi_n \in H^\infty$ are such that $\phi_0 u^n + \phi_1f_1 + \ldots \phi_nf_n = 0$, then $\phi_k(\xi) = 0$ for $0 \leq k \leq n$. To this end, the equality \[ \phi_0 u^n = - (\phi_1 u^{n-1}I + \ldots + \phi_n I^n)\] shows that $\phi_0$ is divisible by $I$, since the right-hand side is, but $u^n$, being outer, is not. It follows that $\phi_0 = Ih_0$ for some $h_0 \in H^\infty$, and therefore $\phi_0(\xi) = I(\xi)h_0(\xi) = 0$. Dividing the above equality by $I$ and re-arranging, we obtain \[(h_0u + \phi_1) u^{n-1} = -(\phi_2u^{n-2}I + \ldots + \phi_n I^{n-1}).\] As above, we must have $h_0u + \phi_1 = h_1I$, with $h_1 \in H^\infty$. Then \[\phi_1(\xi) = h_1(\xi)I(\xi) - h_0(\xi)u(\xi) = 0.\] By repeating the argument we conclude that $\phi_0(\xi) = \phi_1(\xi) = \ldots \phi_n(\xi) = 0$. 
	
	The second part of the proof is identical to the proof of Theorem 2.9 in \cite{mortini}. Assuming that the ideal generated by $\{u^n, f_1, \ldots, f_n\}$ is also generated by $\{e_1, \ldots, e_m\}$, we obtain a matrix $\textbf{M}$ of size $m $-by-$ (n+1)$ and a matrix $\textbf{N}$ of size $(n+1) $-by-$ m$, both with entires in $H^\infty$, such that \[ \textbf{M}(u^n, f_1, \ldots, f_n)^t = (e_1, \ldots, e_m)^t, \quad \textbf{N}(e_1, \ldots, e_m)^t = (u^n, f_1, \ldots,  f_n)^t.\] Then $(\textbf{NM}- I_{n+1})(u^n, f_1, \ldots, f_n)^t = \textbf{0}$, where $I_{n+1}$ is the identity matrix of dimension $n+1$. By the first part of the proof we obtain that $\textbf{N}(\xi)\textbf{M}(\xi) = I_{n+1}$, where the evaluation of the matrices at $\xi$ is done entrywise. Then the rank of the matrix $\textbf{N}(\xi)$ is at least $n+1$, i.e., $m \geq n+1$. 	
\end{proof}

We are ready to prove the main result of the section.
\begin{proof}[Proof of \thref{nonsimilaritytheorem}]
	By \thref{mingenideal} there exist $u, f_1, \ldots, f_n \in H^\infty$, with $u$ outer, such that the ideal of $H^\infty$ generated by $\{u, f_1, \ldots, f_n\}$ is not generated by any set of size less than $n+1$. Let $c_i = \conj{f_i/u} \in \conj{N^+}$ and apply \thref{clemma} to $\textbf{c} = (c_1, \ldots, c_n)$ to obtain a $\textbf{B} = (b_1, \ldots, b_n)$ and a space $\h[\textbf{B}]$ such that \[ J\frac{1}{1-\conj{\lambda}z} = \Bigg( \frac{1}{1-\conj{\lambda}z}, \frac{\textbf{c}(\lambda)}{1-\conj{\lambda}z} \Bigg). \] If $\h[\textbf{B}]$ is equivalent to $\h[\textbf{D}]$, where $\textbf{D} = (d_1, \ldots, d_m)$ and  \[ J_2\frac{1}{1-\conj{\lambda}z} = \frac{(1,\textbf{e}(\lambda))}{1-\conj{\lambda}z} = \frac{(1, e_1(\lambda), \ldots, e_m(\lambda))}{1-\conj{\lambda}z},\] where $J_2$ is the embedding associated to $\h[\textbf{D}]$, then \thref{modulegen} implies that the sets $\{u, f_1, \ldots, f_n \}$ and $\{u, \conj{e_1} u, \ldots, \conj{e_m} u \}$ generate the same ideal in $H^\infty$. Thus $n+1 \geq m+1$.
\end{proof}

\subsection{$M_z$-invariant subspaces.}

Our structure theorem for $M_z$-invariant subspaces will follow easily from \thref{modeltheoremanalytic} after this preliminary lemma.

\begin{lemma} \thlabel{index1prop} Let $\h[\textbf{B}]$ be of finite rank and $M_z$-invariant. If $\M$ is an $M_z$-invariant subspace of $\h[\textbf{B}]$, then for each $\lambda \in \D$ we have that $\dim \M \ominus (M_z -\lambda)\M = 1$, and thus $M$ is nearly invariant.
\end{lemma}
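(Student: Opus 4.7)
My plan is to reduce the statement to a near-invariance property for $\M$, via the operator identity
\[ M_z - \lambda \;=\; (I - \conj{\lambda}M_z)\, M_{\phi_\lambda}, \qquad \phi_\lambda(z) = \frac{z - \lambda}{1 - \conj{\lambda}z}. \]
Since $\rho(M_z) = 1$ on $\h[\textbf{B}]$ by \thref{shiftspecrad}, the Neumann series $\sum_{k \geq 0} \conj{\lambda}^k M_z^k$ converges in the operator norm and represents $(I - \conj{\lambda}M_z)^{-1}$. Because $\M$ is closed and $M_z$-invariant, the series converges within $\M$ for every $f \in \M$, so $(I - \conj{\lambda}M_z)|_\M$ is a bijection of $\M$ onto itself. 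Consequently $(M_z - \lambda)\M = (I - \conj{\lambda}M_z)(\phi_\lambda \M)$, yielding the identity
\[ \dim \M \ominus (M_z - \lambda)\M \;=\; \dim \M / \phi_\lambda \M. \]

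Since $\phi_\lambda(\lambda) = 0$, the inclusion $\phi_\lambda \M \subseteq \M_\lambda := \{f \in \M : f(\lambda) = 0\}$ is clear, and $\M_\lambda$ has codimension at most one in $\M$, equaling one exactly when $\lambda$ is not a common zero of the functions in $\M$. The common-zero case reduces inductively to the non-common-zero case by replacing $\M$ with $L_\lambda \M$, which is closed (since $(M_z - \lambda) L_\lambda = I$ on $\M$ makes $L_\lambda|_\M$ bounded below by $\|M_z - \lambda\|^{-1}$) and $M_z$-invariant. In the non-common-zero case, the reverse inclusion $\M_\lambda \subseteq \phi_\lambda \M$ reduces, via $f = \phi_\lambda (1 - \conj{\lambda}z) L_\lambda f$ for $f \in \M_\lambda$, to the near-invariance statement that $L_\lambda f \in \M$ whenever $f \in \M$ and $f(\lambda) = 0$.

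To prove near-invariance, I decompose $L_\lambda f = g + h$ with $g \in \M$ and $h \in \M^\perp$. From $f = (M_z - \lambda)(g + h) \in \M$ and $(M_z - \lambda) g \in \M$ (by $M_z$-invariance of $\M$), I obtain the key relation $(M_z - \lambda) h \in \M$ in spite of $h \in \M^\perp$. Iterating by $M_z$-invariance yields $M_z^j (M_z - \lambda) h \in \M$ for every $j \geq 0$, and hence the orthogonality relations
\[ \langle h,\, M_z^j (M_z - \lambda) h \rangle_{\h[\textbf{B}]} \;=\; 0, \qquad j = 0, 1, 2, \ldots. \]
Translating these to the model via $J$ of \thref{modeltheoremanalytic}, and unpacking $JM_z$ using the coanalytic correction function $\textbf{c}_f$ from \thref{cfprop} together with the identity $\|M_z f\|^2_{\h[\textbf{B}]} = \|f\|^2_{\h[\textbf{B}]} + \|\textbf{c}_f\|^2$ read off from the model, produces a system of constraints on the coanalytic functions $\textbf{c}_{M_z^j h}$ that forces $h = 0$.

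The main obstacle is this final step: extracting $h = 0$ from the infinite family of orthogonality relations. I anticipate needing the Smirnov maximum principle together with careful boundary-behavior analysis of the coanalytic correction functions from \thref{cfprop}, in the spirit of the proof of \thref{polydense}.
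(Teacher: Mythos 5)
Your reduction of the codimension statement to near invariance is correct and cleanly organized: the factorization $M_z-\lambda=(I-\conj{\lambda}M_z)M_{\phi_\lambda}$ together with the invertibility of $(I-\conj{\lambda}M_z)|_{\M}$ (valid by \thref{shiftspecrad}) does give $\dim\M\ominus(M_z-\lambda)\M=\dim\M/\phi_\lambda\M$, and your handling of common zeros by passing to $L_\lambda\M$ is a workable substitute for the paper's one-line appeal to the constancy of $\lambda\mapsto\dim\M\ominus(M_z-\lambda)\M$ for injective semi-Fredholm families. The problem is that everything after that is deferred, and what is deferred is the entire content of the lemma. Writing $L_\lambda f=g+h$ and observing $(M_z-\lambda)h\in\M$ is not a reduction but a reformulation: the existence of a nonzero $h\in\M^\perp$ with $(M_z-\lambda)h\in\M$ is exactly the failure of near invariance (take $F=(M_z-\lambda)h\in\M$, which vanishes at $\lambda$ and has $L_\lambda F=h\perp\M$). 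Worse, the orthogonality relations $\ip{h}{M_z^j(M_z-\lambda)h}=0$ that you propose to exploit are strictly weaker than the pair of conditions $h\in\M^\perp$, $(M_z-\lambda)h\in\M$, and even that pair does not force $h=0$ in general: in $H^2$ with $\M=\theta H^2$ and $\theta(\lambda)=0$, the function $h=\theta/(z-\lambda)$ is a nonzero element of $\M^\perp=K_\theta$ with $(M_z-\lambda)h=\theta\in\M$. So any successful argument must use the hypothesis that $\lambda$ is not a common zero of $\M$, and your sketch gives no indication of where that hypothesis would enter the "system of constraints on $\textbf{c}_{M_z^jh}$"; the step you flag as "the main obstacle" is not a technical loose end but the actual theorem.

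For comparison, the paper attacks $\ip{L_\lambda f}{h}$ for fixed $f\in\M$ and $h\in\M^\perp$ directly: it computes $\ip{(I-\conj{\lambda}M_z)^{-1}f}{h}=0$ in the model, combines it with the Cauchy transform $K_{f,h}(\lambda)$ of the boundary density $f\conj{h}+\ip{\textbf{f}_1}{\textbf{h}_1}$, and uses the boundary identity \eqref{eq51} for $\textbf{c}_f$ from \thref{cfprop} to show that the nontangential boundary values satisfy $K_{f,h}=f\cdot(\text{something independent of }f)$ a.e.\ on $\T$, hence $K_{f,h}/f=K_{g,h}/g$ as meromorphic functions. Choosing $g\in\M$ with $g(\lambda)\neq 0$ (this is where the non-common-zero hypothesis is used) then yields $K_{f,h}(\lambda)=0$, which unwinds to $\ip{f/(z-\lambda)}{h}=0$ for every $h\in\M^\perp$. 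If you want to complete your version, you would need to reproduce this boundary-value mechanism for your specific $h$; the Smirnov maximum principle alone, in the spirit of \thref{polydense}, will not suffice because the relations you have isolated carry no information distinguishing the common-zero case from the generic one.
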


\begin{proof}
Let $f \in \M, h \in \M^\perp$ and  $Jf = (f, \textbf{f}_1), Jh = (h, \textbf{h}_1)$. By $M_z$-invariance of $\M$ we have, in the notation of \thref{cfprop}, \begin{equation}
0 = \ip{(I_{\h[\textbf{B}]}-\conj{\lambda} M_z)^{-1}f}{h}_{\h[\textbf{B}]} = \int_\T \frac{f(\z)\conj{h(\z)} + \ip{\textbf{f}_1(\z)}{\textbf{h}_1(\z)}_{\mathbb{C}^n}}{1-\conj{\lambda}z}d\m(\z) - \ip{\textbf{c}_f(\lambda)}{\textbf{h}_1(\lambda)}_{\mathbb{C}^n}. \label{eq541}
\end{equation} Let $K_{f,h}$ be the Cauchy transform \begin{equation}K_{f,h}(\lambda) = \lambda \int_\T \frac{f(\z)\conj{h(\z)} + \ip{\textbf{f}_1(\z)}{\textbf{h}_1(\z)}_{\mathbb{C}^n}}{z-\lambda} d\m(\z). \label{eq542}\end{equation} Then $K_{f,h}$ is analytic for $\lambda \in \D$ and admits non-tangential boundary values on $\T$. Adding \eqref{eq541} and \eqref{eq542} gives \begin{equation*}K_{f,h}(\lambda) = \int_\T \frac{1-|\lambda|^2}{|\z - \lambda|^2} \big( f(\z)\conj{h(\z)} + \ip{\textbf{f}_1(\z)}{\textbf{h}_1(\z)}_{\mathbb{C}^n}\big) d\m(\z) - \ip{\textbf{c}_f(\lambda)}{\textbf{h}_1(\lambda)}_{\mathbb{C}^n}. \end{equation*} By taking the limit $|\lambda| \to 1$ and using basic properties of Poisson integrals we see that, for almost every $\lambda \in \T$, we have the equality \begin{gather*}
K_{f,h}(\lambda) = f(\lambda)\conj{h(\lambda)} + \ip{\textbf{f}_1(\lambda)}{\textbf{h}_1(\lambda)}_{\mathbb{C}^n} - \ip{\textbf{c}_f(\lambda)}{\textbf{h}_1(\lambda)}_{\mathbb{C}^n} \\ = f(\lambda)\conj{h(\lambda)} - \ip{(\textbf{A}(\lambda)^*)^{-1}\textbf{B}(\lambda)^*f(\lambda)}{\textbf{h}_1(\lambda)}_{\mathbb{C}^n} \\ =
f(\lambda)\Big( \conj{h(\lambda)} - \ip{(\textbf{A}(\lambda)^*)^{-1}\textbf{B}(\lambda)^*1}{\textbf{h}_1(\lambda)}_{\mathbb{C}^n}\Big),
\end{gather*} where we used \eqref{eq51} in the computation. The meromorphic function $K_{f,h}/f$ thus depends only on $h$, and not on $f$. 

Let $f(\lambda) = 0$ for some $\lambda \in \D \setminus \{0\}$ which is not a common zero of $\M$, so that there exists $g \in \M$ with $g(\lambda) \neq 0$. From $K_{f,h}/f = K_{g,h}/g$ we deduce that $K_{f,h}(\lambda)g(\lambda) = K_{g,h}(\lambda)f(\lambda) = 0$, and thus \begin{gather*}
0 = K_{f,h}(\lambda) = \lambda \int_\T \frac{f(\z)\conj{h(\z)} + \ip{\textbf{f}_1(\z)}{\textbf{h}_1(\z)}_{\mathbb{C}^n}}{z-\lambda} d\m(\z) \\ = \lambda \int_\T \frac{f(\z)\conj{h(\z)} + \ip{\textbf{f}_1(\z) - \textbf{f}_1(\lambda)}{\textbf{h}_1(\z)}_{\mathbb{C}^n}}{z-\lambda} d\m(\z) \\ = \lambda \ip{\tfrac{f(z)}{z-\lambda}}{h}_{\h[\textbf{B}]}.
\end{gather*} Since $h \in \M^\perp$ is arbitrary, we conclude that $\tfrac{f(z)}{z-\lambda} \in \M$, and thus $\dim \M \ominus (M_z - \lambda)\M = 1$. The fact that $\dim \M \ominus M_z\M = 1$ holds also for $\lambda = 0$ or a common zero of the functions in $\M$ follows from basic Fredholm theory. The operators $M_z - \lambda$ are injective semi-Fredholm operators, and thus $\lambda \mapsto \dim \M \ominus (M_z - \lambda)\M$ is a constant function in $\D$. 
\end{proof}

The following is our main theorem on $M_z$-invariant subspaces of finite rank $\h[\textbf{B}]$-spaces. 

\begin{thm} Let $\h = \h[\textbf{B}]$ be of finite rank and  $M_z$-invariant and $\M$ be a closed $M_z$-invariant subspace of $\h$. Then

\begin{enumerate}[(i)]
\item $\dim \M \ominus M_z\M = 1$,
\item any non-zero element in $\M \ominus M_z\M$ is a cyclic vector for $M_z|\M$,
\item if $\phi \in \M \ominus M_z \M$ is of norm $1$, then there exists a space $\h[\textbf{C}]$ invariant under $M_z$, where $\textbf{C} = (c_1, \ldots, c_k)$ and $k \leq n$, such that \begin{equation*}\M = \phi \h[\textbf{C}] \end{equation*} and the mapping $g \mapsto \phi g$ is an isometry from $\h[\textbf{C}]$ onto $\M$, 
\item if $J$ is the embedding given by \thref{modeltheoremanalytic}, $\phi \in \M \ominus M_z\M$ with $J\phi = (\phi, \boldsymbol{\phi}_1)$, then \[ \M = \big\{ f \in \h[\textbf{B}] : \tfrac{f}{\phi} \in H^2, \tfrac{f}{\phi}\boldsymbol{\phi_1} \in H^2(\mathbb{C}^n) \big\}.\]

\end{enumerate}
\end{thm}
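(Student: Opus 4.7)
The plan is to prove the four parts in the order (i) $\to$ (iii) $\to$ (ii) $\to$ (iv), since the structural statement (iii) drives the others. Part (i) is immediate: it is the $\lambda = 0$ specialization of \thref{index1prop}. For (iii), I apply \thref{cor-beurling1} with index $n = 1$, which produces a scalar-valued space $\h[\textbf{C}]$ satisfying (A.1)-(A.3) together with the isometric multiplication $g \mapsto \phi g$ from $\h[\textbf{C}]$ onto $\M$. $M_z$-invariance of $\h[\textbf{C}]$ is automatic by intertwining: for $g \in \h[\textbf{C}]$, $\phi \cdot zg = z(\phi g) \in \M$ since $\M$ is $M_z$-invariant, so $zg \in \h[\textbf{C}]$.

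The real content of (iii) is the bound $k \leq n$ on the rank of $\h[\textbf{C}]$. I would use the reproducing-kernel identity $k_\M(z, \lambda) = \phi(z)\conj{\phi(\lambda)} k_{\h[\textbf{C}]}(z, \lambda)$, which on setting $\Psi := \phi \textbf{C}$ rewrites as
\[ \phi(z)\conj{\phi(\lambda)} - \Psi(z)\Psi(\lambda)^* = (1 - \conj{\lambda}z)k_\M(z, \lambda). \]
Combined with $(1 - \conj{\lambda}z)k_{\h[\textbf{B}]}(z, \lambda) = 1 - \textbf{B}(z)\textbf{B}(\lambda)^*$, the orthogonal split $k_\M = k_{\h[\textbf{B}]} - k_{\M^\perp}$, and the concrete spanning set $(J\h[\textbf{B}])^\perp = \{(\textbf{B}\mathbf{h}, \textbf{A}\mathbf{h}) : \mathbf{h} \in H^2(\mathbb{C}^n)\}$ from \thref{modeltheoremanalytic}, the right-hand side factors so that $\Psi\Psi^*$ is a positive kernel of rank at most $n$, forcing $\Psi$ -- and hence $\textbf{C}$ -- to have at most $n$ nonzero components.

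With (iii) established, (ii) follows quickly: \thref{polydense} applied to the $M_z$-invariant finite-rank $\h[\textbf{C}]$ gives density of polynomials in $\h[\textbf{C}]$, so $1$ is cyclic for $M_z$ on $\h[\textbf{C}]$; under the isometry $g \mapsto \phi g$, $\phi$ is cyclic for $M_z$ on $\M$. Since $\dim \M \ominus M_z\M = 1$ by (i), any nonzero element of $\M \ominus M_z\M$ is a nonzero scalar multiple of $\phi$, hence also cyclic.

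For (iv), the forward inclusion is handled by writing $f = \phi g$ with $g \in \h[\textbf{C}] \subset H^2$ (so $f/\phi \in H^2$) and subtracting $g$ times the boundary identity $\textbf{B}^*\phi + \textbf{A}^*\boldsymbol{\phi}_1 \in \conj{H^2_0(\mathbb{C}^n)}$ from $\textbf{B}^*(\phi g) + \textbf{A}^*\textbf{f}_1 \in \conj{H^2_0(\mathbb{C}^n)}$: the $\textbf{B}^*$-terms cancel, producing $\textbf{A}^*(\textbf{f}_1 - g\boldsymbol{\phi}_1) = \conj{\textbf{u}_f} - g\conj{\textbf{u}_\phi}$ with the explicit $\textbf{u}_f, \textbf{u}_\phi \in H^2_0(\mathbb{C}^n)$ from \thref{cfprop}. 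Invertibility of $\textbf{A}$ on $\D$ (from \thref{shiftinvcriterion} via \thref{noWlemma}) combined with $\textbf{f}_1 \in H^2(\mathbb{C}^n)$ then pins down $g\boldsymbol{\phi}_1 \in H^2(\mathbb{C}^n)$. The reverse inclusion is obtained by reversing this computation to construct the required $\textbf{f}_1 \in H^2(\mathbb{C}^n)$ so that $J(\phi g) = (\phi g, \textbf{f}_1)$ lies in $J\M$. The main obstacle I anticipate is the rank bound in (iii): the isometry $g \mapsto \phi g$ intertwines the forward shifts but does not translate the defect operator $I - LL^*$ of $\h[\textbf{C}]$ into anything simpler than a rank-one perturbation on $\M$, so the proof has to proceed through the kernel factorization rather than by an operator-theoretic shortcut.
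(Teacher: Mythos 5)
Your outline is sound in its reductions---(i) from \thref{index1prop}, (ii) from (iii) via \thref{polydense}, and the identification $\M = \phi\h[\textbf{C}]$ via the index-one case of \thref{cor-beurling1}---but the one step you yourself flag as "the real content," the rank bound $k \le n$, is not actually proved, and I do not believe the kernel factorization you sketch goes through. Writing $\Psi = \phi\textbf{C}$ and using $k_\M = k_{\h[\textbf{B}]} - k_{\M^\perp}$ gives
\begin{equation*}
\Psi(z)\Psi(\lambda)^* \;=\; \phi(z)\conj{\phi(\lambda)} - 1 + \textbf{B}(z)\textbf{B}(\lambda)^* + (1-\conj{\lambda}z)\,k_{\M^\perp}(z,\lambda),
\end{equation*}
and while $\textbf{B}(z)\textbf{B}(\lambda)^*$ has rank at most $n$, the remaining terms do not: $\phi(z)\conj{\phi(\lambda)} - 1$ is a difference of rank-one positive kernels with no definite sign, and $(1-\conj{\lambda}z)k_{\M^\perp}(z,\lambda)$ has no controlled rank or sign, since $\M^\perp$ is only $M_z^*$-invariant and can be infinite dimensional. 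Nothing in the spanning set $(J\h[\textbf{B}])^\perp = \{(\textbf{B}\mathbf{h},\textbf{A}\mathbf{h})\}$ converts this sum into a positive kernel of rank $\le n$; you would need an additional structural input to cap the rank, and your proposal does not supply one.

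The paper obtains that input from the nearly invariant norm formula: \thref{index1prop} shows $\M$ is nearly invariant, so \thref{nearinvnorm} together with part (i) of \thref{modelformulaconnection} yields the identity $\|f\|^2_{\h[\textbf{B}]} = \|f/\phi\|^2_{2} + \|\textbf{f}_1 - \tfrac{f}{\phi}\boldsymbol{\phi}_1\|^2_{2}$. This produces an explicit isometry $If = (f/\phi,\, \textbf{f}_1 - \tfrac{f}{\phi}\boldsymbol{\phi}_1)$ of $\M$ into $H^2 \oplus H^2(\mathbb{C}^n) \simeq H^2(\mathbb{C}^{n+1})$ intertwining $L^\phi$ with $L$, so that $I\M$ is backward shift invariant and Beurling--Lax gives $(I\M)^\perp = \Psi H^2(\mathbb{C}^k)$ with $k \le n+1$ automatically; the case $k = n+1$ is then excluded because $p\phi \in \M$ for every polynomial $p$ would force a nonzero inner determinant $\det\Psi$ orthogonal to all polynomials. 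It is this embedding into an $(n+1)$-dimensional vector-valued Hardy space---which your argument never constructs---that both caps the rank and underlies part (iv): the paper's reverse inclusion there rests on the boundary identity $\textbf{C}^* = \textbf{D}^*(\textbf{A}^*)^{-1}\textbf{B}^*\phi + \textbf{D}^*\boldsymbol{\phi}_1$ extracted from \thref{cfprop} applied to $\phi$, followed by the Smirnov maximum principle. Your sketch of (iv) is in the right spirit but inherits the gap, since it presupposes the structure of $(I\M)^\perp$ established in (iii).
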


\begin{proof}
Part (i) has been established \thref{index1prop} and part (ii) follows from (iii) by \thref{polydense}. It thus suffices to prove parts (iii) and (iv). 

We verified in \thref{index1prop} that $\M$ is nearly invariant, and thus norm formula \eqref{eq221} of \thref{nearinvnorm} applies. A computation shows that, in the notation of \thref{nearinvnorm}, we have $L^\phi_\lambda f = L_\lambda (f-\tfrac{f(\lambda)}{\phi(\lambda)}\phi)$, at least when $\phi(\lambda) \neq 0$. Thus if $Jf = (f, \textbf{f}_1)$ and $J\phi =(\phi, \boldsymbol{\phi}_1)$, then by \eqref{eq221} and part (i) of \thref{modelformulaconnection} we obtain that \begin{equation}\|f\|^2_{\h[\textbf{B}]} = \|f/\phi\|^2_{H^2} + \|\textbf{g}_1\|^2_{H^2(\mathbb{C}^n)}\label{501} \end{equation} where $\textbf{g}_1(z) = \textbf{f}_1(z) - \tfrac{f(z)}{\phi(z)}\boldsymbol{\phi}_1(z)$. The mapping $If := (f/\phi, \textbf{g}_1)$ is therefore an isometry from $\M$ into $H^2 \oplus H^2(\mathbb{C}^n)$. The identity $IL^\phi f = (L(f/\phi), L\textbf{g}_1)$ shows that $I\M$ is a backward shift invariant subspace of $H^2 \oplus H^2(\mathbb{C}^n) \simeq H^2(\mathbb{C}^{n+1})$. Consequently by the Beurling-Lax theorem we have that $(I\M)^\perp = \Psi H^2(\mathbb{C}^k)$, for some $(n+1)$-by-$k$ matrix-valued bounded analytic function such that $\Psi(\z): \mathbb{C}^k \to \mathbb{C}^{n+1}$ is an isometry for almost every $\z \in \T$. We claim that $k \leq n$. Indeed, in other case $\Psi$ is an $(n+1)$-by-$(n+1)$ square matrix, and hence $\psi(z) = \det \Psi (z)$ is a non-zero inner function. We would then obtain \begin{equation}\psi H^2(\mathbb{C}^{n+1}) =  \Psi \text{adj}(\Psi) H^2(\mathbb{C}^{n+1}) \subset \Psi H^2(\mathbb{C}^{n+1}) = (I\M)^\perp. \label{eq59} \end{equation} But since $\M$ is shift invariant, the function $p\phi$ is contained in $\M$ for any polynomial $p$, and hence for any polynomial $p$ there exists a tuple of the form $(p, \textbf{g})$ in $I\M$. Together with $
\eqref{eq59}$ this shows that the polynomials are orthogonal to $\psi H^2$, so $\psi = 0$ and we arrive at a contradiction. 

Decompose the matrix $\Psi$ as \begin{equation*}
\Psi(z) = \begin{bmatrix}
\textbf{C}(z) \\\textbf{D}(z)
\end{bmatrix}
\end{equation*} where $\textbf{C}(z) = (c_1(z), \ldots, c_k(z))$ and $\textbf{D}(z)$ is an $n$-by-$k$ matrix. Consider the Hilbert space $\tilde{\M} = \M/\phi = \{f/\phi : f \in \M\}$ with the norm $\|f/\phi\|_{\tilde{\M}} = \|f\|_{\h[\textbf{B}]}$. By \eqref{501}, the map \begin{equation} \tilde{I}f/\phi := \Big(f/\phi, \textbf{f}_1 - \tfrac{f}{\phi}\boldsymbol{\phi}_1 \Big) \label{Iemb} \end{equation} is an isometry from $\tilde{\M}$ into $H^2 \oplus H^2(\mathbb{C}^n)$, and \begin{equation}(\tilde{I}\tilde{\M})^\perp = \{ (\textbf{C}\textbf{h}, \textbf{D}\textbf{h}) : \textbf{h} \in H^2(\mathbb{C}^k) \}. \label{eqCD} \end{equation} The argument of \thref{modeltheorem} can be used to see that $$k_{\tilde{\M}}(\lambda,z) = \frac{1- \sum_{i=1}^k \conj{c_i(\lambda)}c_i(z)}{1-\conj{\lambda}z}$$ is the reproducing kernel of $\tilde{\M}$. Thus $\tilde{\M} = \h[\textbf{C}]$, and the proof of part (iii) is complete.

Finally, we prove part (iv). The inclusion of $\M$ in the set given in (iv) has been established in the proof of part (iii) above. On the other hand, assume that $f \in \h[\textbf{B}]$ is contained in that set. We will show that $(f/\phi, \textbf{f}_1 - \tfrac{f}{\phi}\boldsymbol{\phi}_1 )$ is orthogonal to the set given in \eqref{eqCD}, and thus $f/\phi \in \tilde{\M}$, so that $f \in \M$. In order to verify the orthogonality claim, we must show that $\textbf{C}^*\tfrac{f}{\phi} + \textbf{D}^*(\textbf{f}_1 - \tfrac{f}{\phi}\boldsymbol{\phi}_1) \in \overline{H^2_0(\mathbb{C}^n)}.$ According to \thref{cfprop}, we have that  \[ J\frac{\phi(z)}{1-\conj{\lambda}z} = \Big( \frac{\phi(z)}{1-\conj{\lambda}z}, \frac{\boldsymbol{\phi}_1(z) - \textbf{c}_\phi(\lambda)}{1-\conj{\lambda}z} \Big) \] for some coanalytic function $\textbf{c}_\phi$ which satisfies $\textbf{c}_\phi = (\textbf{A}^*)^{-1}\textbf{B}^*\phi + \boldsymbol{\phi}_1$ on $\T$. Setting $f(z) = \frac{\phi(z)}{1-\conj{\lambda}z}$ in \eqref{Iemb} we obtain from \eqref{eqCD} that \[ \Big( \frac{1}{1-\conj{\lambda}z} ,  -\frac{\textbf{c}_\phi(\lambda)}{1-\conj{\lambda}z}\Big) \perp \{ (\textbf{C}\textbf{h}, \textbf{D}\textbf{h}) : \textbf{h} \in H^2(\mathbb{C}^k) \}\] and then it easily follows that $\textbf{c}_\phi(\lambda) = (\textbf{D}^*(\lambda))^{-1}\textbf{C}^*(\lambda)1$. Using \eqref{eq51} we obtain the boundary value equality \[\textbf{C}^* = \textbf{D}^*(\textbf{A}^*)^{-1}\textbf{B}^* \phi + \textbf{D}^*\boldsymbol{\phi}_1,\] and thus on $\T$ we have \begin{equation}\textbf{C}^*\tfrac{f}{\phi} + \textbf{D}^*(\textbf{f}_1 - \tfrac{f}{\phi}\boldsymbol{\phi}_1) = \textbf{D}^* (\textbf{A}^*)^{-1}(\textbf{B}^*f + \textbf{A}^* \textbf{f}_1) \label{eq99}. \end{equation} Since $f \in \h[\textbf{B}]$ we have that $\textbf{B}^*f +  \textbf{A}^*g \in \overline{H^2_0(\mathbb{C}^n)}$ and thus \eqref{eq99} represents square-integrable boundary function of a coanalytic function in the Smirnov class. An appeal to the Smirnov maximum principle completes the proof of (iv).
\end{proof}

\subsection{Backward shift invariant subspaces.} The lattice of $L$-invariant subspaces of $\h[\textbf{B}]$-spaces is much less complicated than the lattice of $M_z$-invariant subspaces. The following theorem generalizes a result of \cite{sarasondoubly} for $\h(b)$ with non-extreme $b$. Our method of proof is new, and relies crucially on \thref{modeltheoremanalytic}. 

\begin{thm} \thlabel{Linvsubspaces}Any proper $L$-invariant subspace of a $M_z$-invariant finite rank $\h[\textbf{B}]$-space is of the form $$\h[\textbf{B}] \cap K_\theta,$$ where $\theta$ is an inner function and $K_\theta = H^2 \ominus \theta H^2$. 
\end{thm}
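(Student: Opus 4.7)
The plan is to use the contractive inclusion $\h[\textbf{B}] \hookrightarrow H^2$ coming from the first coordinate of the isometric embedding $J$ of \thref{modeltheoremanalytic} to pull back Beurling's theorem, and then close the argument with the disk-algebra density from \thref{conttheoremcor}.

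First I would view a proper closed $L$-invariant subspace $\M \subset \h[\textbf{B}]$ as a subset of $H^2$ through this contractive inclusion. Since the inclusion intertwines the backward shifts, the $H^2$-closure $\widetilde{\M}$ is a closed $L$-invariant subspace of $H^2$ and hence equals $K_\theta$ for some inner function $\theta$ by Beurling's theorem. The trivial case $\widetilde{\M} = \{0\}$ corresponds to $\M = \{0\}$ (with $\theta \equiv 1$), while the other extreme $\widetilde{\M} = H^2$ must be excluded; I would handle this using the rigidity of the second coordinate of the model in the finite-rank, $M_z$-invariant regime (where \thref{noWlemma} guarantees $\textbf{A}$ is a square outer matrix with nonvanishing determinant) together with the polynomial density from \thref{polydense}, deducing that $\widetilde{\M} = H^2$ would force $\M = \h[\textbf{B}]$ and contradict properness.

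The inclusion $\M \subseteq \h[\textbf{B}] \cap K_\theta$ is immediate from $\M \subseteq \widetilde{\M} = K_\theta$. For the reverse inclusion, $\h[\textbf{B}] \cap K_\theta$ is itself a closed $L$-invariant subspace of $\h[\textbf{B}]$, and by \thref{conttheoremcor} both this subspace and $\M$ equal the $\h[\textbf{B}]$-norm closures of their intersections with the disk algebra $\A$. Thus it suffices to show that each $f \in \h[\textbf{B}] \cap K_\theta \cap \A$ lies in $\M$. Because $f \in K_\theta = \widetilde{\M}$, there is a sequence $g_k \in \M$ with $g_k \to f$ in $H^2$; the task is to upgrade this to convergence in $\h[\textbf{B}]$-norm, so that $f \in \M$ by closedness. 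For the upgrade I would pass to the model $Jg_k = (g_k, (g_k)_1)$ and use the defining relation
\[
\textbf{B}^*(\z)g_k(\z) + \textbf{A}^*(\z)(g_k)_1(\z) \in \overline{H^2_0(\mathbb{C}^n)},
\]
together with the almost-everywhere invertibility of $\textbf{A}(\z)$ on $\T$ (supplied by \thref{shiftinvcriterion} via the $M_z$-invariance) and the $\A$-regularity of $f$, to control $(g_k)_1$ in $H^2(\mathbb{C}^n)$-norm. A bounded subsequence in $\h[\textbf{B}]$ then converges weakly to $f$, placing $f$ in the closed subspace $\M$.

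The principal obstacle is precisely this final upgrade from $H^2$-approximation to $\h[\textbf{B}]$-approximation: the map $g \mapsto \textbf{g}_1$ is not globally bounded from $H^2$ to $H^2(\mathbb{C}^n)$, so the uniform $H^2(\mathbb{C}^n)$-control on $(g_k)_1$ that powers the weak-limit argument must be extracted from the combined structure of $\M$, the finite-rank $M_z$-invariance, and the smoothness of the target $f$.
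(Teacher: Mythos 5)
There is a genuine gap, and it sits exactly where you locate it: the ``upgrade'' from $H^2$-approximation to membership in $\M$. Your reduction shows that the theorem is equivalent to the statement $\h[\textbf{B}]\cap\widetilde{\M}=\M$, where $\widetilde{\M}$ is the $H^2$-closure of $\M$; but that statement carries the entire content of the theorem, and the ingredients you list do not prove it. Given $g_k\in\M$ with $g_k\to f$ in $H^2$, nothing bounds $\|(g_k)_1\|_{H^2(\mathbb{C}^n)}$: the map $g\mapsto\textbf{g}_1$ is a closed operator whose domain $\h[\textbf{B}]$ is in general \emph{not} closed in $H^2$ (already $\h(b)$ for non-extreme $b$ is a dense, non-closed subset of $H^2$), so the closed graph theorem is unavailable and $H^2$-convergence gives no control on the second components. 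The almost-everywhere invertibility of $\textbf{A}(\z)$ does not rescue this, because $(\textbf{A}(\z)^*)^{-1}$ is essentially unbounded on $\T$ whenever $1-\|\textbf{B}\|_2^2$ is not bounded below (writing $(g_k)_1=(\textbf{A}^*)^{-1}(\textbf{u}_k-\textbf{B}^*g_k)$ with $\textbf{u}_k\in\conj{H^2_0(\mathbb{C}^n)}$ only moves the problem into the unknown $\textbf{u}_k$), and the $\A$-regularity of the \emph{limit} $f$ says nothing about the approximants $g_k$, which are arbitrary elements of $\M$ close to $f$ only in the weak $H^2$ metric. A secondary, smaller gap is the exclusion of $\widetilde{\M}=H^2$: density of $\M$ in $H^2$ does not obviously force $\M=\h[\textbf{B}]$, because orthogonality in $\h[\textbf{B}]$ involves the second components as well; this needs an argument of the same kind as the paper's multiplicity count.

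For comparison, the paper never passes to the scalar $H^2$-closure of $\M$. It applies the Beurling--Lax theorem inside the model: $(J\M)^\perp$ is an $M_z$-invariant subspace of $H^2\oplus H^2(\mathbb{C}^n)\simeq H^2(\mathbb{C}^{n+1})$ containing $\{(\textbf{B}\textbf{h},\textbf{A}\textbf{h}):\textbf{h}\in H^2(\mathbb{C}^n)\}$, its multiplicity is shown to be exactly $n+1$ (this is where properness of $\M$ enters, via \thref{polydense}), so $(J\M)^\perp=\Psi H^2(\mathbb{C}^{n+1})$ for a square inner matrix $\Psi$, and $\theta$ is \emph{defined} as $\det\Psi$. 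The reverse inclusion $\h[\textbf{B}]\cap K_\theta\subseteq\M$ is then obtained not by approximation but by an algebraic identity: $\h[\textbf{B}]\cap K_\theta$ is characterized as the preimage under $J$ of the orthocomplement of $\textbf{I}H^2(\mathbb{C}^{n+1})$, where $\textbf{I}$ is the inner factor of the explicit matrix with rows $(\textbf{B},\theta)$ and $(\textbf{A},0)$, and the containment $\textbf{I}H^2(\mathbb{C}^{n+1})\subseteq\Psi H^2(\mathbb{C}^{n+1})$ is upgraded to equality by comparing determinants of inner factors ($\det\textbf{I}=\alpha\theta=\det\Psi\det\textbf{J}$ forces the quotient $\textbf{J}$ to be a constant unitary). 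That determinant comparison is the idea your proposal is missing, and I do not see how to complete your route without introducing something equivalent to it.
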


\begin{proof}
Let $J: \h[\textbf{B}] \to H^2 \oplus H^2({\mathbb{C}^n})$ be the embedding of \thref{modeltheoremanalytic}. If $\M$ is an $L$-invariant subspace of $\h[\textbf{B}]$, then $J\M$ is an $L$-invariant subspace of $H^2 \oplus H^2(\mathbb{C}^n)$ by (iii) of \thref{modeltheoremanalytic}. Thus $(J\M)^\perp$ is an $M_z$-invariant subspace containing $(J\h[\textbf{B}])^\perp = \{(\textbf{B}\textbf{h}, \textbf{A}\textbf{h}) : \textbf{h} \in H^2(\mathbb{C}^n)\}$. Because $M_z$ acting on $(J\h[\textbf{B}])^\perp$ is a shift of multiplicity $n$, the multiplicity of $M_z$ acting on $(J\M)^\perp$ is at least $n$, and since $(J\M)^\perp \subset H^2 \oplus H^2(\mathbb{C}^n)$, it is at most $n+1$. We claim that this multiplicity must equal $n+1$. Indeed, if it was equal to $n$, then it is easy to see that $$(J\M)^\perp = \{ (\textbf{Ch}, \textbf{Dh}) : \textbf{h} \in H^2(\mathbb{C}^n)\}$$ for some $\textbf{C}(z) = (c_1(z), \ldots, c_n(z))$ and $\textbf{D}(z)$ an $n$-by-$n$-matrix valued analytic function. The fact that no tuple of the form $(0,\textbf{g})$ is included in $J\M$ implies that $\textbf{D}$ is an outer function, and thus $\textbf{D}(\lambda)$ is an invertible operator for every $\lambda \in \D$. The tuple \begin{equation*} \Big(\frac{1}{1-\conj{\lambda}z}, -\frac{(\textbf{D}(\lambda)^*)^{-1}\textbf{C}(\lambda)^*1}{1-\conj{\lambda}z}\Big)
\end{equation*}  is clearly orthogonal to $(J\M)^\perp$, and thus $\frac{1}{1-\conj{\lambda}z} \in \M$ for every $\lambda \in \D$. Then $\M = \h[\textbf{B}]$ by the proof of \thref{polydense}. We assumed that $\M$ is a proper subspace, and so multiplicity of $M_z$ on $(J\M)^\perp$ cannot be $n$.

Having established that $M_z$ is a shift of multiplicity $n+1$ on $(J\M)^\perp$, we conclude that $$(J\M)^\perp= \Psi H^2(\mathbb{C}^{n+1}),$$ where $\Psi$ is an $(n+1)$-by-$(n+1)$ matrix-valued inner function, and $\theta = \det \Psi$ is a non-zero scalar-valued inner function. Note that $\theta H^2(\mathbb{C}^{n+1}) = \Psi\,\text{adj}(\Psi) H^2(\mathbb{C}^{n+1}) \subseteq \Psi H^2(\mathbb{C}^{n+1})$. Thus if $f \in \M$, then $Jf = (f,\textbf{f}_1) \perp \Psi H^2(\mathbb{C}^{n+1}) \supseteq \theta H^2(\mathbb{C}^{n+1})$. It follows that $f \in K_\theta$, and thus we have shown that $\M \subseteq \h[\textbf{B}] \cap K_\theta$. Next, consider the $(n+1)$-by-$(n+1)$ matrix \begin{gather*}
\textbf{M}(z) = \begin{bmatrix}
\textbf{B}(z) & \theta(z) \\ \textbf{A}(z) & 0
\end{bmatrix}.
\end{gather*} Then it is easy to see that $f \in \h[\textbf{B}] \cap K_\theta$ if and only if $Jf = (f,\textbf{f}_1) \perp \textbf{M}(z)\textbf{h}(z)$ for all $\textbf{h} \in H^2(\mathbb{C}^{n+1})$. If $\textbf{M}(z) = \textbf{I}(z)\textbf{U}(z)$ is the inner-outer factorization of $\textbf{M}$ into an $(n+1)$-by-$(n+1)$-matrix valued inner function $\textbf{I}$ and an $(n+1)$-by-$(n+1)$-matrix valued outer function $\textbf{U}$, then we also have that \begin{equation}f \in \h[\textbf{B}] \cap K_\theta \text{ if and only if } Jf = (f, \textbf{f}_1) \perp \textbf{I}H^2(\mathbb{C}^{n+1}). \label{eq72} \end{equation} From the containment $\M \subseteq \h[\textbf{B}] \cap K_\theta$ we get by taking orthocomplements that $\textbf{I}H^2(\mathbb{C}^{n+1}) \subseteq \Psi H^2(\mathbb{C}^{n+1})$ and thus there exists a factorization $\textbf{I} = \Psi \textbf{J}$, where $\textbf{J}$ is an $(n + 1)$-by-$(n+1)$-matrix valued inner function. Since $-\theta \det \textbf{A} = \det \textbf{M} = \det \textbf{I} \det \textbf{U}$, we see (by comparing inner and outer factors) that $\det \textbf{I} = \alpha\theta$, with $\alpha \in \T$, and so $\alpha \theta = \det \textbf{I} = \det \Psi \det \textbf{J} = \theta \det \textbf{J}$. We conclude that $\det \textbf{J}$ is a constant, and thus $\textbf{J}$ is a constant unitary matrix. But then $(J\M)^\perp = \Psi H^2(\mathbb{C}^{n+1}) = \textbf{I}H^2(\mathbb{C}^{n+1})$, and so the claim follows by \eqref{eq72}.
\end{proof}

\bibliographystyle{siam}
\bibliography{mybib}

\begin{thebibliography}{10}

\bibitem{mccarthypick}
{\sc J.~Agler and J.~E. McCarthy}, {\em Pick Interpolation and Hilbert Function
  Spaces}, vol.~44 of Graduate Studies in Mathematics, American Mathematical
  Society, 2002.

\bibitem{aleksandrovinv}
{\sc A.~B. Aleksandrov}, {\em Invariant subspaces of shift operators. {A}n
  axiomatic approach}, Zap. Nauchn. Sem. Leningrad. Otdel. Mat. Inst. Steklov.
  (LOMI), 113 (1981), pp.~7--26, 264.

\bibitem{alemanhabil}
{\sc A.~Aleman}, {\em The multiplication operator on {H}ilbert spaces of
  analytic functions}, Habilitationsschrift, Fern Universität, Hagen,  (1993).

\bibitem{acppredualsqp}
{\sc A.~Aleman, M.~Carlsson, and A.-M. Persson}, {\em Preduals of
  {$Q_p$}-spaces}, Complex Var. Elliptic Equ., 52 (2007), pp.~605--628.

\bibitem{afr}
{\sc A.~Aleman, N.~Feldman, and W.~Ross}, {\em The {H}ardy space of a slit
  domain}, Frontiers in Mathematics, Birkh\"auser Verlag, Basel, 2009.

\bibitem{dbrcont}
{\sc A.~Aleman and B.~Malman}, {\em Density of disk algebra functions in de
  {B}ranges--{R}ovnyak spaces}, C. R. Math. Acad. Sci. Paris, 355 (2017),
  pp.~871--875.

\bibitem{alemanrichtersimplyinvariant}
{\sc A.~Aleman and S.~Richter}, {\em Simply invariant subspaces of {$H^2$} of
  some multiply connected regions}, Integral Equations Operator Theory, 24
  (1996), pp.~127--155.

\bibitem{ars}
{\sc A.~Aleman, S.~Richter, and C.~Sundberg}, {\em Beurling's theorem for the
  {B}ergman space}, Acta Math., 177 (1996), pp.~275--310.

\bibitem{ballbolotnikov}
{\sc J.~Ball and V.~Bolotnikov}, {\em de {B}ranges-{R}ovnyak spaces: basics and
  theory}, pp.~631--679.

\bibitem{revcarlesonross}
{\sc A.~Blandign\`eres, E.~Fricain, F.~Gaunard, A.~Hartmann, and W.~T. Ross},
  {\em Direct and reverse {C}arleson measures for {$H(b)$} spaces}, Indiana
  Univ. Math. J., 64 (2015), pp.~1027--1057.

\bibitem{borichevpriv}
{\sc A.~Borichev}.
\newblock private communication.

\bibitem{borichevhedenmalmcyclicity}
{\sc A.~Borichev and H.~Hedenmalm}, {\em Harmonic functions of maximal growth:
  invertibility and cyclicity in {B}ergman spaces}, J. Amer. Math. Soc., 10
  (1997), pp.~761--796.

\bibitem{cauchytransform}
{\sc J.~Cima, A.~Matheson, and W.~T. Ross}, {\em The {C}auchy transform},
  vol.~125 of Mathematical Surveys and Monographs, American Mathematical
  Society, Providence, RI, 2006.

\bibitem{ransforddbrdirichlet}
{\sc C.~Costara and T.~Ransford}, {\em Which de {B}ranges-{R}ovnyak spaces are
  {D}irichlet spaces (and vice versa)?}, J. Funct. Anal., 265 (2013),
  pp.~3204--3218.

\bibitem{durenbergmanspaces}
{\sc P.~Duren and A.~Schuster}, {\em Bergman spaces}, vol.~100 of Mathematical
  Surveys and Monographs, American Mathematical Society, Providence, RI, 2004.

\bibitem{esterle}
{\sc J.~Esterle}, {\em Toeplitz operators on weighted {H}ardy spaces}, Algebra
  i Analiz, 14 (2002), pp.~92--116.

\bibitem{revcarlmodelspaces}
{\sc A.~Hartmann, X.~Massaneda, A.~Nicolau, and J.~Ortega-Cerd\`a}, {\em
  Reverse {C}arleson measures in {H}ardy spaces}, Collect. Math., 65 (2014),
  pp.~357--365.

\bibitem{havinbook}
{\sc V.~P. Havin and B.~J\"oricke}, {\em The uncertainty principle in harmonic
  analysis}, vol.~72 of Encyclopaedia Math. Sci., Springer, Berlin, 1995.

\bibitem{hedenmalmbergmanspaces}
{\sc H.~Hedenmalm, B.~Korenblum, and K.~Zhu}, {\em Theory of {B}ergman spaces},
  vol.~199 of Graduate Texts in Mathematics, Springer-Verlag, New York, 2000.

\bibitem{helsonbook}
{\sc H.~Helson}, {\em Lectures on invariant subspaces}, Academic Press, New
  York-London, 1964.

\bibitem{hittanulus}
{\sc D.~Hitt}, {\em Invariant subspaces of {$H^2$} of an annulus}, Pacific J.
  Math., 134 (1988), pp.~101--120.

\bibitem{mortini}
{\sc R.~Mortini}, {\em Generating sets for ideals of finite type in
  {$H^\infty$}}, Bull. Sci. Math., 136 (2012), pp.~687--708.

\bibitem{nikolskiivasyuninnotesfuncmod}
{\sc N.~K. Nikolski and V.~I. Vasyunin}, {\em Notes on two function models}, in
  The {B}ieberbach conjecture ({W}est {L}afayette, {I}nd., 1985), vol.~21 of
  Math. Surveys Monogr., Amer. Math. Soc., Providence, RI, 1986, pp.~113--141.

\bibitem{richterreinedirichlet}
{\sc S.~Richter}, {\em Invariant subspaces of the {D}irichlet shift}, J. Reine
  Angew. Math., 386 (1988), pp.~205--220.

\bibitem{richtersundberglocaldirichlet}
{\sc S.~Richter and C.~Sundberg}, {\em A formula for the local {D}irichlet
  integral}, Michigan Math. J., 38 (1991), pp.~355--379.

\bibitem{rosenblumrovnyakhardyclasses}
{\sc M.~Rosenblum and J.~Rovnyak}, {\em Hardy classes and operator theory},
  Oxford Mathematical Monographs, The Clarendon Press, Oxford University Press,
  New York, 1985.

\bibitem{sarasondoubly}
{\sc D.~Sarason}, {\em Doubly shift-invariant spaces in {$H^2$}}, J. Operator
  Theory, 16 (1986), pp.~75--97.

\bibitem{sarasonbook}
{\sc D.~Sarason}, {\em Sub-{H}ardy {H}ilbert spaces in the unit disk}, vol.~10
  of University of Arkansas Lecture Notes in the Mathematical Sciences, John
  Wiley \& Sons, Inc., New York, 1994.

\bibitem{nagyfoiasharmop}
{\sc B.~Sz.-Nagy, C.~Foias, H.~Bercovici, and L.~K\'erchy}, {\em Harmonic
  analysis of operators on {H}ilbert space}, Universitext, Springer, New York,
  second~ed., 2010.

\bibitem{vinogradovthm}
{\sc S.~A. Vinogradov}, {\em Properties of multipliers of integrals of
  {C}auchy-{S}tieltjes type, and some problems of factorization of analytic
  functions}, in Mathematical programming and related questions ({P}roc.
  {S}eventh {W}inter {S}chool, {D}rogobych, 1974), {T}heory of functions and
  functional analysis ({R}ussian), Central \`Ekonom.-Mat. Inst. Akad. Nauk
  SSSR, Moscow, 1976, pp.~5--39.
\newblock English translation in Transl. Amer. Math. Soc. (2) 115 (1980) 1–32).

\end{thebibliography}

\end{document}